\newtheorem{theorem}{Theorem}[section]
\newtheorem{corollary}[theorem]{Corollary}
\newtheorem{lemma}[theorem]{Lemma}
\newtheorem{proposition}[theorem]{Proposition}
\par\noindent{\bf Proposition \ref{res:hiper}.}\!\!
\par\noindent{\bf Theorem \ref{result43}.}\!\!
\par\noindent{\it Sketch of the proof}.  
\hfill\linebreak[2]\hspace*{\fill}$\circlearrowleft$}
\newenvironment{proof1}%
	{\par\noindent{\it Proof of Theorem }\ref{lfdhi}.\nopagebreak\normalsize}% 
	{\hfill\linebreak[2]\hspace*{\fill}$\circlearrowleft$}
\newenvironment{proof428}%
	{\par\noindent{\it Proof of Theorem }\ref{317em}.\nopagebreak\normalsize}% 
	{\hfill\linebreak[2]\hspace*{\fill}$\circlearrowleft$}
\theoremstyle{definition}
       \newtheorem{definition}[theorem]{Definition}
	\newtheorem{claim}[theorem]{Claim}
       \newtheorem{remark}[theorem]{Remark}
       \newtheorem{example}[theorem]{Example}
       \newtheorem{parrafo}[theorem]{{\!}}  }
\numberwithin{equation}{theorem}
 \newcommand{\cali}{{\mathcal {I}}}
\newcommand{\calo}{{\mathcal {O}}}
\DeclareMathOperator{\Max}{\underline{Max}}
\DeclareMathOperator{\ord}{ord}
\DeclareMathOperator{\Reg}{Reg}
\DeclareMathOperator{\Sing}{Sing}
\DeclareMathOperator{\Spec}{Spec}
\DeclareMathOperator{\word}{w-ord}
\newcommand{\B}{{\mathcal B}}
\newcommand{\id}[1]{\langle #1 \rangle}
\definecolor{darkpurple}{rgb}{0.28,0.24,0.55}
\definecolor{lightblue}{rgb}{0,0.75,1}
\title{Some natural properties of constructive resolution of singularities} 
\author{Ang\'elica Benito}
\address{Dpto. Matem\'aticas,  Universidad
Aut\'onoma de Madrid and Instituto de Ciencias Matem\'aticas CSIC-UAM-UC3M-UCM, 
Cantoblanco, 28049 Madrid, Spain}
\email{angelica.benito@uam.es}
\author{Santiago Encinas}
\address{Dpto. Matem\'atica Aplicada, Universidad de Valladolid, 47014 Valladolid, Spain}
\email{sencinas@maf.uva.es}
\author{Orlando E. Villamayor U.}
\address{Dpto. Matem\'aticas,  Universidad
Aut\'onoma de Madrid and Instituto de Ciencias Matem\'aticas CSIC-UAM-UC3M-UCM, Cantoblanco, 28049 Madrid, Spain}
\email{villamayor@uam.es}
\thanks{2000 {\em Mathematics subject classification. 14E15.}}
\thanks{{\it Key words:} Singularities, resolution of singularities, log-principalization, equivariance.}
 \thanks{The authors are partially supported by MTM2009-07291.}
\date{\today}
\dedicatory{To Professor H. Hironaka on his 80th birthday}
\begin{document}

\begin{abstract}
These expository notes, addressed to non-experts, are intended to present some of Hironaka's ideas on his theorem of resolution of singularities. We focus particularly on those aspects which have played a central role in the constructive proof of this theorem.

%We discuss why Hironaka's contribution lead to natural properties of constructive resolution such as equivariance, compatibility with open restrictions, with pull-backs by smooth morphisms, and with changes of base field.

In fact, algorithmic proofs of the theorem of resolution grow, to a large extend, from the so called Hironaka's fundamental invariant. Here we underline the influence of this invariant in the proofs of the natural properties of constructive resolution, such as: equivariance, compatibility with open restrictions, with pull-backs by smooth morphisms, with changes of the base field, independence of the embedding, etc.

\end{abstract}
\maketitle

{\tableofcontents}

\section{Introduction}

\begin{parrafo} Hironaka's fundamental theorem in \cite{Hir64} proves resolution of singularities in characteristic zero. 
%VVVVVVVVVVVVVVVVVVVVVVVVVVVVVVVVVVVVVV

\begin{theorem}\label{buenosair}{\rm ({\bf Hironaka})}. If $X$ is a variety over a field of characteristic zero, there is a proper and birational morphism
\begin{equation}\label{lopri}
X  \stackrel{\pi}{\longleftarrow} X'
\end{equation}
 such that:
\begin{itemize}
\item[i)] $\pi$ is an isomorphism over the open set $U=\Reg(X)$ of
regular points.

\item[ii)] $X'$ is smooth.

\item[iii)] $\pi^{-1}(\Sing(X))$ is a union of smooth hypersurfaces in $X'$ having only normal crossings.
\end{itemize}

\end{theorem}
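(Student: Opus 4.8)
The plan is to deduce the theorem from a stronger, \emph{embedded} statement proved by an explicit algorithm. First I would reduce to the case where $X$ is a closed subscheme of a smooth variety $W$: resolution is a local question, so one covers $X$ by affine charts each admitting such an embedding, and the patching is deferred to the end, to be handled by the canonicity of the construction. Attached to the embedding one considers the pair (or \emph{basic object}) formed by $W$, the ideal $I(X)\subset\Oh_W$, and the empty normal-crossings divisor; the goal then becomes to turn $I(X)$ into a locally monomial ideal supported on a normal-crossings divisor by a finite sequence of blow-ups at smooth centers. Points ii) and iii) follow from this, while i) holds because every center will be chosen inside $\Sing(X)$, so $\pi$ is an isomorphism over $\Reg(X)$, and $X'$ is recovered as the strict transform of $X$ in the final ambient variety.

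The heart of the matter is the construction of Hironaka's resolution function $\inv$, an upper semi-continuous function on the points of $W$ with values in a totally ordered set, defined by induction on $\dim W$. Its first coordinate is the order of the ideal at the point (equivalently the multiplicity of $X$); at a point of maximal order one invokes, \emph{in characteristic zero}, the existence of a smooth hypersurface of maximal contact, replaces the ideal by its \emph{coefficient ideal} on that hypersurface, and iterates the entire construction in one dimension less. This yields a string $\inv=(\word_1,\word_2,\dots)$, refined by bookkeeping data that records the exceptional components already created (their truncations and the component-counting data $\mathbf{t}$). The next center is declared to be the maximum locus of $\inv$, and one must verify that this locus is smooth and meets the accumulated exceptional divisor transversally, so that the blow-up stays within the category of basic objects.

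The two places where the real work lies are: (a) proving that $\inv$ is \emph{intrinsic}, i.e. independent of the auxiliary choices — the hypersurface of maximal contact, the local embedding, and the ordering of the exceptional components — since this is precisely what gives equivariance and allows the local resolutions to be glued into a single global morphism $\pi$; and (b) \emph{termination} of the algorithm: after finitely many blow-ups the problem enters the ``monomial case'', where the transformed ideal is already supported on a normal-crossings divisor, and there one must exhibit a combinatorial invariant that strictly decreases under the remaining blow-ups. Combining upper semi-continuity of $\inv$, the fact that $\inv$ drops after each blow-up at its maximum locus, and the absence of infinite strictly decreasing chains in its value set yields the finiteness of the process. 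The delicate descending induction on dimension via coefficient ideals — kept compatible throughout with the blow-ups already performed — is where maximal contact, and hence the characteristic-zero hypothesis, is indispensable; this is the step I expect to be the main obstacle.
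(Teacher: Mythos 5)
The reduction step is the weak link. You propose to attach to $X\subset W$ the basic object $(W,I(X),\emptyset)$ and to log-principalize $I(X)$, and you then assert that properties i)--iii) follow and, in particular, that ``every center will be chosen inside $\Sing(X)$.'' This is not automatic. For $b=1$ the singular locus of the pair $(I(X),1)$ is $V(I(X))=X$, not $\Sing(X)$; the algorithm for principalization happily puts centers over smooth points of the strict transform of $X$, so i) can fail as written. The paper is explicit about exactly this difficulty (\ref{210}): it states that one cannot attach a basic object $(W_0,(J_0,b),E_0)$ with $\Sing(J_0,b)=\Sing(X_0)$ in a way stable under transformations, because the strict transform law for $I(X)$ is not the controlled transform law for basic objects. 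What the paper does instead is pass through the Hilbert--Samuel function: by Bennett's inequality and Theorems \ref{H3}, \ref{H4}, \ref{H6}, one attaches (locally on an \'etale cover) a basic object to the top Hilbert--Samuel stratum $X(\overline h)$, resolves that basic object to force $\max HS$ to drop, and iterates. Since $X(\overline h)\subset\Sing(X)$ whenever $X$ is singular, this is what secures i). If you want to run your $I(X)$-route, you still need a device of this kind (tracking the maximal multiplicity for hypersurfaces, or Hilbert--Samuel in general, and re-marking when the maximum drops) to control the centers and to see that the strict transform eventually detaches as a smooth normal-crossings component.

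Beyond that gap, your outline of the inductive engine -- $\ord$/$\word$, maximal contact, coefficient ideal, descending induction on dimension, canonicity of $\Max\inv$, termination -- is the same program the paper develops, but with a different organizing principle for the intrinsicness/patching problem. You phrase it as ``independence of the auxiliary choices''; the paper, following Hironaka, formalizes this as \emph{weak equivalence} of basic objects (Definition \ref{weq}) and the notion of an \emph{assignment of closed sets} (Definition \ref{afcs}), and then proves (Theorem \ref{lfdhi}, via the ``Hironaka trick'' in \S\ref{trdh}) that $\ord$ and the derived satellite functions $\word$, $t^{(d)}$, $t(em)^{(d)}$ depend only on the closed sets a basic object produces under all local sequences of blow-ups and smooth pull-backs. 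That is precisely what makes the locally defined (and only \'etale-locally existing) basic objects patch into a global resolution and what yields compatibility with smooth morphisms, equivariance, and independence of the embedding. Your proposal arrives at similar conclusions but leaves this mechanism implicit, whereas in the paper it is the whole point.
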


A morphism $X  \stackrel{\pi}{\longleftarrow} X'$, as above, is called {\em a resolution of singularities} of $X$. He also shows that this morphism can be constructed as a composition, say
%on smooth subschemes $Y_i$, say:
 \begin{equation}\label{rsni}
\xymatrix@C=2.5pc{
 X_0=X & X_1\ar[l]_{\ \ \ \ \ \pi_{Y_1}} &\dots  \ar[l]_{\pi_{Y_2}} &  X_r=X'\ar[l]_{\!\!\!\!\pi_{Y_r}} 
}
\end{equation}
where each $\pi_{Y_i}$ is the blow-up along a smooth center $Y_i (\subset X_i)$.
% where $X_r$ is nonsingular, and the 

There can be many resolutions of $X$, and the proof in \cite{Hir64} shows that a resolution always exists. {\em Constructive proofs} of Hironaka's theorem go one step beyond. They provide, for each singular reduced scheme $X$, a specific resolution, called the {\em constructive resolution} of $X$. In other words, they give a procedure to resolve singularities in such a way that it has very natural properties.
 Suppose now that (\ref{rsni}) is {\em the} resolution of $X$ provided by this procedure, it has the property:
\begin{enumerate}
\item \emph{Compatibility with smooth morphisms}: A smooth morphism $\widetilde{X}\overset{\sigma}{\longrightarrow}X$ provides, by taking successive pull-backs, a sequence
 \begin{equation}\label{eqin2}
\xymatrix@C=2.5pc{
 \widetilde{X} & \widetilde{X}_1\ar[l]_{\pi_{\widetilde{Y}_1}} &\dots  \ar[l]_{\pi_{\widetilde{Y}_2}} &  \widetilde{X}_r\ar[l]_{\pi_{\widetilde{Y}_r}} 
};
\end{equation}
and the property is that (\ref{eqin2}) is the constructive resolution of $\widetilde{X}$.

\item \emph{Lifting of group action}: If a group acts on $X$ it also acts on the resolved scheme $X_r$.  
\item \emph{Compatibility with change of the base field} (Similar to (1)).
\end{enumerate}
We shall later formulate the theorem of {\em embedded} resolution, say of $X$ embedded in a smooth scheme $W$, and another property of constructive resolution is that it can be easily adapted so that the resolution $X$ is independent of the embedding in $W$. These 
matters will be discussed along this paper.
\end{parrafo}

\begin{parrafo}
Essential to the argument used in Hironaka's proof is a reduction of the problem of resolution of singularities. A new problem is formulated, in terms of the data $(W,(J,b))$, where $W$ is a smooth scheme over a field $k$, $J$ a non-zero sheaf of ideals, and $b$ a positive integer. This new problem is stated as a {\em resolution} of $(W,(J,b))$ as we describe below. These data define a closed subset in $W$
$$\Sing(J,b)=\{x\in W\ |\ \nu_x(J)\geq b\},$$
where $\nu_x(J)$ denotes the order of $J$ at the point $x$. If $Y$ is a smooth irreducible subscheme included in $\Sing(J,b)$, and if $W\overset{\pi_Y}{\longleftarrow} W_1$ is the blow-up along $Y$, then there is a factorization
$$J\calo_{W_1}=I(H)^{b}J_1,$$
where $H\subset W_1$ denotes the exceptional hypersurface. The pair $(J_1,b)$ is said to be the \emph{transform} of $(J,b)$. We will consider $J_1$ together with the factorization 
$$J_1=I(H)^{c}\overline{J}_1,$$
where $c=\nu_y(J)-b$  and $y$ is the generic point of $Y$.

Similarly, given an iteration of transformations, say
 \begin{equation}\label{eqin31}
\xymatrix@C=2pc@R=0pc{
(J,b) & (J_1,b) &  &(J_r,b)\\
W & W_1\ar[l]_{\pi_{Y_1}} &\dots  \ar[l]_{\pi_{Y_2}} &  W_r\ar[l]_{\pi_{Y_r}}  
}
\end{equation}
the transform $J_r$ is endowed with a factorization of the form $J_r=I(H_1)^{c_1}\dots I(H_r)^{c_r}\overline{J}_r$, where each $H_i$ is the exceptional hypersurface introduced by the blow-up $W_{i-1}\overset{\pi_{Y_i}}{\longleftarrow}W_i$.

We will always assume that such a sequence is constructed in such a way that the strict transforms of the $r$ exceptional hypersurfaces, say $\{H_1,\dots,H_r\}$, have normal crossings in $W_r$. 

Hironaka's reformulation (the reduction) of the problem of resolution can be stated as follows: 

\vspace{0.4cm}

\noindent{\bf Problem:} Given $(W,(J,b))$, construct a sequence (\ref{eqin31}) in such a way that $\Sing(J_r,b)=\emptyset$. 

We will explain in Section \ref{sect2}, starting in
 \ref{reform},  why the solution of this problem leads to resolution of singularities.
\vspace{0.3cm}

Given a sequence of transformations (\ref{eqin31}), Hironaka defines, for all index $i$, the functions:
$$\ord_i:\Sing(J_i,b)\longrightarrow \mathbb{Q},\quad\ord_i(x)=\frac{\nu_x(J_i)}{b}.$$
These are called {\em Hironaka's functions}, and they paved the way to constructive proofs of resolution of singularities. 
A by-product of Hironaka's functions, which makes use of the factorization 
$J_i=I(H_1)^{c_1}\dots I(H_i)^{c_i}\overline{J}_i$, are the functions
$$\word_i:\Sing(J_i,b)\longrightarrow\mathbb{Q},\quad\word_i(x)=\frac{\nu_x(\overline{J}_i)}{b}.$$
All these functions take only finitely many values. Denote by $\max\word_i$ the maximum value achieved by $\word_i$. If (\ref{eqin31}) is constructed with the extra condition that $Y_i\subset\Max \word_i$, where
$$\Max \word_i=\{x\in W_i\ |\ \word_i(x)=\max\word_i\},$$ then it can be shown that
\begin{equation}\label{indes1}
\max\word_0\geq \max\word_1\geq\dots\geq\max\word_r.
\end{equation}
Note that $\max\word_r=0$ if and only if $\overline{J}_r=\calo_{W_r}$ and $J_r=I(H_1)^{c_1}\dots I(H_r)^{c_r}$. If this happens, then it is easy to extend the sequence (\ref{eqin31}) to a resolution.

\vskip 0.5cm

In the constructive resolution treated here, we view $W$ as a smooth subscheme of a smooth scheme $N$. We also consider a set $F$ of smooth hypersurfaces in $N$ having only normal crossings.
%
%which is now a set of smooth hypersurfaces with normal crossings (if $N=W$ we are in the setting of Hironaka's starting point). 

The sequence (\ref{eqin31}) gives rise to a sequence 
 \begin{equation}\label{eqin41}
\xymatrix@C=2.5pc{
N & N_1\ar[l]_{\pi_{Y_1}} &\dots  \ar[l]_{\pi_{Y_2}} &  N_r\ar[l]_{\pi_{Y_r}}  
}
\end{equation}
(same blow-ups) together with closed immersions $W_i\subset N_i$ (for any index $i$), where each $W_i$ is identified with the strict transform of $W_{i-1}$.  A main problem in constructive resolution can be formulated as follows:

\vspace{0.2cm}

\noindent{\bf Main Problem}: Fix:
\begin{itemize}
\item $(W,(J,b))$,  as above.
\item A smooth scheme $N$ and $F=\{H'_1, \dots ,H'_s\}$ a set of smooth hypersurfaces in $N$ with only normal crossings.
\item A closed embedding $W\subset N$.
\end{itemize}
 The problem is to construct a sequence (\ref{eqin31}), with centers $Y_i$, so that:
\begin{enumerate}
\item  $\Sing(J_r,b)=\emptyset$, and 
 
\item the sequence (\ref{eqin41}) of $r$ blow-ups over the smooth scheme  $N$, induced by the sequence (\ref{eqin31}), is such that the strict transform of hypersurfaces in $F$ together with the $r$ exceptional hypersurfaces introduced (all together) have normal crossings in $N_r$.
\end{enumerate}
\vspace{0.2cm}
 
Note here that there is an added difficulty over the previous Problem. The task is to construct a sequence (\ref{eqin31}), which fulfills the property (\ref{indes1}) and leads to $\Sing(J_r,b)=\emptyset$ ,  with an additional constraint on each center $Y_i$. For instance the first center $Y_1$, included in $W_1 (\subset N_1)$, should have normal crossings with the hypersurfaces in $F$, and there is no information on how hypersurfaces of $F$ intersect the closed subscheme $W$.

For each index $i$, let $F_i$ be the set of hypersurfaces with normal crossings in 
$N_i$, which consists of the strict transform of hypersurfaces in $F$, together with the $i$ exceptional hypersurfaces introduced.

This main problem can be solved by giving a canonical choice of centers $Y_i$. The key for this will be our two coordinate functions:
$$(\word_i^{(d)},n_i^{(d)}):\Sing(J_i,b)\longrightarrow \mathbb{Q}\times\mathbb{Z},$$
where $d=\dim W$, called the {\em inductive functions}. Essentially, it is through these function, and induction on $d$, that we can define a string of invariants which indicates how to choose each  $Y_i$.  

Here each $\word_i^{(d)}$ is the function previously defined, and $n_i^{(d)}$ is defined by counting the hypersurfaces of a certain subset of $F_i$, say $F_i^-$. This set $F_i^-$ is entirely defined in terms of the inequalities (\ref{indes1}). Hence, the function $n_i^{(d)}$ is also a by-product of Hironaka's functions.

We follow here, with some variations, the scheme and notation in \cite{Villa89}  and \cite{Villa92}, particularly on the latter where 
some of the natural properties of constructive resolution are addressed.
The goal of this presentation is to explain Hironaka's approach in \cite{Hironaka77}.
% not only for the proof of resolution of singularities, but also in the formulation of natural properties such as equivariance. 
 So we put here special emphasis on Hironaka's reformulation (reduction) of the resolution problem, and why this led to the proofs of the natural properties of constructive resolution mentioned before. 
We refer to \cite{BHV} (see also \cite{EncVil97:Tirol}) for the relation of this development with more recent work on the subject, and for some technical aspects of the algorithm. 

We thank the referee for here/his many useful suggestions, which have helped us to improve the presentation of this paper.

%In a precise way, in order to define the string of invariants, our main problem is given by the data $(J,b),W^{(d)}$, the closed inmersion $W^{(d)}\subset N$ and $E'$ a set of hypersurfaces (with normal crossings) in $N$. Our two coordinates function, say $t^{(d)}$, will lead us to a canonical choice of centers by decreasing induction on $d(=\dim W^{(d)})$.
\end{parrafo}

\section{First definitions and formulation of the Main Theorems}\label{sect2}
\begin{parrafo} 
%The set of regular points of a reduced scheme $X$ of finite type over a field, say $\Reg(X)$, is open and dense. Set $\Sing(X)=X\setminus \Reg(X)$, a first formulation of resolution problem is:
%%\end{parrafo}

%\begin{theorem}\label{buenosair}{\rm ({\bf Hironaka})}. If $X$ is a reduced scheme of finite type over a field of characteristic zero, there is a proper birational morphism
%\begin{equation}\label{lopri}
%X  \stackrel{\pi}{\longleftarrow} X'
%\end{equation}
%so that
%\begin{itemize}
%\item[i)] $\pi$ defines an isomorphism over the open set $U=\Reg(X)$ of
%regular points.

%\item[ii)] $X'$ is smooth.

%\item[iii)] $\pi^{-1}(\Sing(X))$ is a union of smooth hypersurfaces in $X'$ having only normal crossings.
%\end{itemize}

%\end{theorem}
%
%The morphism $X  \stackrel{\pi}{\longleftarrow} X'$ is called {\em a resolution of singularities} of $X$, and it can be constructed as a composition of monoidal transformations on closed and smooth subschemes $Y_i$, say:
% \begin{equation}\label{rsni}
%\xymatrix@C=2.5pc{
% X_0=X & X_1\ar[l]_{\ \ \ \ \ \pi_{Y_1}} &\dots  \ar[l]_{\pi_{Y_2}} &  X_r=X'\ar[l]_{\!\!\!\!\pi_{Y_r}} 
%}
%\end{equation}

Recall that \emph{Constructive Resolution} is a procedure that indicates, given a singular reduced scheme $X$, how to choose the centers $Y_i$ to construct a resolution of singularities as in (\ref{rsni}). We can think of it as an algorithm in which, at each step, the input are the equations defining $X$, and the output are the equations defining the center $Y$; and the same for each index $1\leq i \leq r$. But if we think of equations defining $X$, it is natural to embed the scheme in a smooth scheme, say $X\subset W$. This can be done locally, as we shall always consider here $X$ to be a scheme of finite type (see \cite{GabSch} for an implementation).

%For example $X$ can be covered by affine open sets, say $X_\lambda$, and each $X_\lambda$ has a closed embedding in a finite affine space $\mathbb A^{n_\lambda}$.
\end{parrafo}
 
\begin{parrafo}\label{p1}  Let us fix some notation needed for the formulation of the theorem of \emph{embedded resolution}. A pair $(W_0,E_0)$ denotes here a smooth scheme $W_0$, and a set  $E_0=\{H_1, \dots, H_s\}$ of hypersufaces with normal crossings in $W_0$. The sequence 
\begin{equation}\label{ressol}
\xymatrix@C=2.5pc{
(W_{0}, E_0) & (W_{1}, E_1)\ar[l]_{\ \ \pi_{Y_1}} &\dots  \ar[l]_{\ \ \ \ \ \pi_{Y_2}} &  (W_{r}, E_r)\ar[l]_{\!\!\! \pi_{Y_r}} 
}\end{equation}
denotes a composition of blow-ups, where each center  $Y_j\subset
W_j$ is closed, smooth, and has normal crossings with $E_j$. 
If $H_{s+j+1}\subset W_{j+1}$ denotes the exceptional
hypersurface of $W_{j}\stackrel{\pi_{Y_{j+1}}}{\longleftarrow}  W_{j+1}$, then $E_{j+1}$ is defined as the union of the strict transform of hypersurfaces in $E_j$, together with $H_{s+j+1}$. 
Centers $Y_j$ which have normal crossings with $E_j$ are said to be {\em permissible} for $(W_j,E_j)$.
\end{parrafo}

\begin{theorem}[{\bf Embedded Resolution of
Singularities}] \label{classical}
Given a smooth scheme $W_0$ over a field $k$ of characteristic zero, and $X_0$, a closed and reduced subscheme of $W_0$, there is a sequence of blow-ups as in (\ref{ressol}), say
 \begin{equation}
\label{res2sol}
\xymatrix@C=2.5pc@R=0pc{
X_0 & X_1 & & X_r\\
(W_{0}, E_0) & (W_{1}, E_1)\ar[l]_{\ \ \pi_{Y_1}} &\dots  \ar[l]_{\ \ \ \ \ \pi_{Y_2}} &  (W_{r}, E_r)\ar[l]_{\!\!\! \pi_{Y_r}} 
}\end{equation}
where each $X_i$ denotes the strict transform of $X_{i-1}$, and so that:
\begin{enumerate}
\item[(i)] The hypersurfaces of $E_r$ have normal crossings in $W_r$.

 \item[(ii)]
 $W_0\setminus [ \Sing(X_0) \cup \bigcup_{H_i\in E_0}H_i]
 = W_r\setminus\bigcup_{H_i\in E_r}H_i$.
\item[(iii)]  $X_{r}$ is smooth
and has normal crossings with  $\bigcup_{H_i\in E_r}H_i$.
\end{enumerate}

So if $E_0=\emptyset$, the morphism
%$\Reg(X_0)\cong \overline{\pi}^{-1}_r(\Reg(X_0))\subset X_{r}$ and 
$ X_0 \stackrel{{\pi}}{\longleftarrow}  X_r$, induced by $(\ref{res2sol})$, is a resolution of singularities as in \ref{buenosair}.
\end{theorem}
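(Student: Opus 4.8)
The plan is to reduce Theorem \ref{classical} to the \textbf{Main Problem} stated in the introduction, and then to invoke the solvability of that problem (the existence of a canonical choice of centers via the inductive functions $(\word_i^{(d)},n_i^{(d)})$) together with the principle, recalled in the discussion around (\ref{indes1}), that a sequence of transformations of a pair $(W,(J,b))$ reaching $\Sing(J_r,b)=\emptyset$ leads to a resolution of singularities. First I would set up the inductive framework: given $X_0\subset W_0$ reduced, one passes locally to a pair $(W_0,(J_0,b_0))$ attached to $X_0$ whose singular locus $\Sing(J_0,b_0)$ coincides with $\Sing(X_0)$ — for instance, using the maximal order $b_0=\max_{x}\nu_x(I(X_0))$ and $J_0=I(X_0)$, so that resolving $(W_0,(J_0,b_0))$ lowers the maximal multiplicity of $X_0$, and one iterates this finitely many times (induction on the maximal multiplicity) until $X$ becomes smooth.

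The core of the argument is the following loop. At each stage we have $X_i\subset W_i$ with a pair $(J_i,b_i)$ detecting the locus of maximal multiplicity, and we have the set $F_i$ (equivalently $E_i$) of hypersurfaces with normal crossings consisting of strict transforms of $E_0$ together with all exceptional divisors introduced so far. I would apply the solution of the Main Problem to $(W_i,(J_i,b_i))$, the ambient scheme $N=W_i$ (or the larger $N$ in which $W_i$ is embedded, if one wants independence of the embedding), and the set $F_i$: this yields a sequence of blow-ups along permissible centers — permissible precisely because the canonical centers are contained in $\Max\word^{(d)}$ and, by the second requirement of the Main Problem, have normal crossings with $F_i$ — after which $\Sing(J_i,b_i)=\emptyset$, i.e. the maximal multiplicity of $X_i$ has strictly dropped, and throughout the process $E_j$ has normal crossings (giving (i) along the way). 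Because each center lies in $\Sing(J,b)=\Sing(X)_{\max}\subset\Sing(X)$ and away from it the blow-ups are isomorphisms that do not touch $X\setminus\Sing(X)$ nor create new components through old points outside the centers, the equality in (ii), $W_0\setminus[\Sing(X_0)\cup\bigcup_{H_i\in E_0}H_i]=W_r\setminus\bigcup_{H_i\in E_r}H_i$, is maintained: every point removed on the left is either a singular point (eventually regularized, but then lying on some exceptional $H_i$) or lies on an original $H_i\in E_0$ (whose strict transform or total transform stays inside $\bigcup E_r$). Repeating the loop finitely often (the maximal multiplicity is a positive integer that strictly decreases) leaves $X_r$ smooth; a final, standard application of the embedded/normal-crossings machinery — resolving the pair attached to the ideal of $X_r$ together with $E_r$, or simply the monomial case discussed after (\ref{indes1}) — arranges that $X_r$ has normal crossings with $\bigcup_{H_i\in E_r}H_i$, giving (iii), and that $E_r$ itself has normal crossings, giving (i).

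The last assertion, that for $E_0=\emptyset$ the induced morphism $X_0\longleftarrow X_r$ is a resolution in the sense of \ref{buenosair}, is then immediate: properness and birationality are automatic for a composition of blow-ups along closed centers; (ii) with $E_0=\emptyset$ says exactly that $\pi$ is an isomorphism over $\Reg(X_0)$; (iii) gives smoothness of $X_r$ and that $\pi^{-1}(\Sing(X_0))\subset\bigcup_{H_i\in E_r}H_i$ is a union of smooth hypersurfaces with normal crossings meeting $X_r$ in normal crossings — which, restricted to $X_r$, is condition (iii) of Hironaka's theorem.

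I expect the main obstacle to be the bookkeeping that makes the induction on maximal multiplicity interact correctly with the accumulating divisors: one must check that the pair $(J_{i+1},b_{i+1})$ attached to the next multiplicity stratum of $X_{i+1}$ is a legitimate input to the Main Problem relative to the \emph{updated} set $F_{i+1}$ (so that the normal-crossings constraint is inherited), and that clearing $\Sing(J_i,b_i)=\emptyset$ genuinely forces a strict drop in $\max\nu_x(I(X_i))$ rather than merely in some auxiliary order — this is where Hironaka's fundamental invariant and the compatibility of the functions $\word_i$, $n_i$ with transformations, as recalled in the excerpt, do the real work, and where the delicate point about centers being simultaneously permissible for $(W_i,E_i)$ and contained in $\Max\word_i^{(d)}$ must be honored.
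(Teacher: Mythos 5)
The central step of your reduction is not available. You propose to attach to $X_0\subset W_0$ the basic object $(W_0,(J_0,b_0))$ with $J_0=I(X_0)$ and $b_0=\max_x\nu_x(I(X_0))$, and to iterate. But the transform of a pair $(J_0,b_0)$ under a permissible blow-up is the ideal $J_1$ defined by $J_0\calo_{W_1}=I(H)^{b_0}J_1$, whereas the strict transform $X_1$ of $X_0$ has ideal $I(X_1)=(I(X_0)\calo_{W_1}:I(H)^\infty)$. These agree only in codimension one. In general $J_1\neq I(X_1)$, so $\Sing(J_1,b_0)$ has no a priori relation to the multiplicity strata of $X_1$, and ``resolving $(W_0,(J_0,b_0))$'' does not force any drop in $\max_x\nu_x(I(X_1))$. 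This is precisely the obstruction the paper identifies in \ref{210}: one cannot attach a basic object $(W_0,(J_0,b),E_0)$ to $X_0\subset W_0$ with $\Sing(J_0,b)$ equal to a multiplicity stratum of $X_0$ \emph{and} have that equality preserved by transformations, because the strict transform of ideals is ``a law which is quite involved'' compared with the simple transform law for pairs. The paper's route is therefore not multiplicity but the Hilbert--Samuel function: Bennett's inequality (\ref{jurdw}) gives the required upper semi-continuity along permissible blow-ups, and Theorems \ref{H3} and \ref{H4} supply, up to weak equivalence and after restriction to an \'etale cover, a basic object whose $\Sing(J_i,b)$ genuinely equals the maximal Hilbert--Samuel stratum $X_i(\overline h)$ of the strict transform at every stage. (There is also an internal slip: with your choices, $\Sing(J_0,b_0)=\{x:\nu_x(I(X_0))\geq b_0\}$ is the top multiplicity stratum, not $\Sing(X_0)$ as you first write.)

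A second gap, which your phrase ``one passes locally'' touches on but does not resolve, is patching. The basic objects attached by Hironaka's theorem exist only over an \'etale cover and are well defined only up to weak equivalence; different charts produce different pairs $(J_\lambda,b_\lambda)$ whose restrictions to overlaps are merely weakly equivalent, not equal. Your argument takes for granted that the locally chosen canonical centers will glue into a global sequence of blow-ups of $(W_0,E_0)$. That is exactly the ``Main Challenge'' of \ref{rkH4}: one must construct the resolution functions so that two weakly equivalent basic objects receive literally the same upper semi-continuous function, hence the same centers. This is what motivates the entire machinery of Sections 2--5 (Theorem \ref{lfdhi} that $\ord$ is an invariant of the assignment of closed sets, the satellite functions $\word$ and $t^{(d)}$, the notion of general basic object). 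Without engaging with weak equivalence and the \'etale-local nature of the attachment, the induction you describe does not produce a globally defined sequence (\ref{res2sol}), so the theorem does not follow.
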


\begin{parrafo}{\em On Constructive Resolution.}
%for the algorithm of resolution in
%\cite{Villa89}, for the case of characteristic zero.
Fix a topological space $X$, and a totally ordered set $(T, \geq
)$. In this work an upper semi-continuous function $g: X \rightarrow T$ is a function with the following properties: 
\begin{enumerate}
\item[(i)] $g$ takes  
only finitely many values, and 
\item[(ii)] for any $\alpha \in T$
the set $\{x\in X \ |\ g(x)\geq \alpha \}$ is closed in $X$. 
\end{enumerate}
The largest value achieved by $g$ will be denoted by
$\max g$. And $\Max g$ will denote the set of points in 
$X$ where $g$ takes its highest value ($\max g$). So $\Max g$ is closed in $X$.

In Theorem \ref{classical}, $X_0$ is a reduced closed subscheme in a smooth scheme $W_0$. Constructive resolution also applies in this context. It makes use of a specific totally ordered set $(T, \geq)$. Fix a closed immersion, say $X_0\subset W_0$, and $(W_0,E_0)$ as before, then either $X_0$ is smooth and has normal crossings with $E_0$, or an upper semi-continuous function
$ f_0: X_0 \longrightarrow T$
is defined. It has the property that if $Y_0=\Max f_0$ (the set of points where the function takes its maximum value), then $Y_0$ is smooth, has normal crossings with $E_0$, and the blow-up along $Y_0$ provides a diagram, say: 
 \begin{equation}\label{la1}
\xymatrix@R=0pc@C=2.5pc{
 X_0 &  X_1\\
(W_{0}, E_0)  & (W_1,E_1)\ar[l]_{\ \pi_{Y_{0}}}
}\end{equation}
where $X_1\subset W_1$ is  the strict transform of $X_0$. 
Again, either $X_1$ is smooth and has normal crossings with $E_1$ or $ f_1: X_1 \longrightarrow T$ is defined. 

In this latter case, the function is such that $Y_1=\Max f_1$ is smooth, and has normal crossing with $E_1$. The blow-up along $Y_1$ provides 
\begin{equation}\label{la2}
\xymatrix@R=0pc@C=2.5pc{
 X_0 &  X_1 & X_2\\
(W_{0}, E_0)  & (W_1,E_1)\ar[l]_{\ \pi_{Y_{0}}} & (W_2,E_2)\ar[l]_{\ \ \pi_{Y_{1}}}
}
\end{equation}
Assume inductively that for a given index $s$, a sequence
 \begin{equation}\label{la3}
\xymatrix@R=0pc@C=2.9pc{
 X_0 &  X_1 & X_2 & & X_{s}\\
(W_{0}, E_0)  & (W_1,E_1)\ar[l]_{\ \pi_{Y_{0}}} & (W_2,E_2)\ar[l]_{\ \ \pi_{Y_{1}}} & \dots\ar[l]_{\ \ \ \ \ \pi_{Y_{2}}} & (W_{s},E_{s})\ar[l]_{\!\!\!\!\pi_{Y_{s-1}}}
}\end{equation}
is constructed by setting $Y_i$ in terms of the function $f_i:X_i \longrightarrow T$, $0\leq i \leq s-1$. Then,  either $X_{s}$ is smooth and has normal crossings with 
$E_{s}$ (the sequence is an embedded resolution), or a function $ f_s: X_s \longrightarrow T$
is defined with the property that  $Y_s=\Max f_s$ is smooth, and has normal crossings with $E_s$.

Note here that we take as initial data: $X_0\subset W_0$, $(W_0,E_0)$, and that {\em the functions with values on} $T$ enables us to construct a  sequence (\ref{la3}). The point is that for some index $r$ the sequence
\begin{equation}
\label{ressm}
\xymatrix@R=0pc@C=2.9pc{
 X_0 &  X_1 & X_2 & & X_{r}\\
(W_{0}, E_0)  & (W_1,E_1)\ar[l]_{\ \pi_{Y_{0}}} & (W_2,E_2)\ar[l]_{\ \ \pi_{Y_{1}}} & \dots\ar[l]_{\ \ \ \ \ \pi_{Y_{2}}} & (W_{r},E_{r})\ar[l]_{\!\!\!\!\!\pi_{Y_{r-1}}}
}\end{equation}
is such that $X_r$ is smooth and has normal crossings with the hypersurfaces in $E_r$. Moreover, all centers $Y_i$ will be included in $\Sing(X_i) \bigcup (\cup H_i)$, which ensures that this is an embedded resolution.

It is essential to point out that the set $T$ is {\em universal}, namely  for {\em any}
$X_0\subset W_0$, $(W_0,E_0)$, the functions $f_i$, which provide the embedded resolution (\ref{ressm}), take values on the same $T$. We shall indicate later how $T$ and the functions $f_i$ are defined. Here (\ref{ressm}) is said to be the {\em Constructive Resolution} 
of the data $X_0\subset W_0$, $(W_0,E_0)$.
%Before we address these important points let us first give some indication on some properties which are natural to expect from such procedure.
%As before, (\ref{ressm}) 
It is constructed by the upper semi-continuous functions:
\begin{equation}\label{fdri}
f_0: X_0 \longrightarrow T, \quad \ f_1: X_1 \longrightarrow T, \ \dots \ \ \ f_{r-1}: X_{r-1} \longrightarrow T,
\end{equation}
and these functions depend also on $E_0$ (and on each  $E_i$).
Note that  (\ref{ressm})  is determined by the centers $Y_i$, and 
$Y_i=\Max f_i (\subset X_i$). So (\ref{ressm}) can be reconstructed from (\ref{fdri}).

 Neglecting the ambient spaces $W_i$, $0\leq i \leq r$ in  (\ref{ressm}) we get
%$\overset{\sigma}{\longleftarrow}$
%%$$
%%X_0 X_1\ar[l]_{\ \ \ \ \ \pi_{Y_1}} \dots  \ar[l]_{\pi_{Y_2}}   X_r\ar[l]_{\!\!\!\!\pi_{Y_r}} 
%$$
 \begin{equation}\label{rsn4}
\xymatrix@C=2.5pc{
 X_0 & X_1\ar[l]_{\ \ \ \ \ \pi_{Y_1}} &\dots  \ar[l]_{\pi_{Y_2}} &  X_r\ar[l]_{\!\!\!\!\pi_{Y_r}} 
,}\end{equation}
which is also determined by the functions in (\ref{fdri}), as $Y_i=\Max f_i$, and $f_i$ is a function on $X_i$.

Note that if $E_0=\emptyset$ then the latter fulfills the three conditions in Theorem \ref{buenosair}. In other words if the initial data is $X_0\subset W_0$, $(W_0,E_0=\emptyset)$, then the functions in 
(\ref{fdri}) (which depend on $E_0$), provide a resolution of singularities of $X_0$. We say that the embedded resolution of $X_0\subset W_0$ defines a non-embedded resolution of $X_0$.

A property of the algorithm is that it can be defined so that the non-embedded resolution is independent of the embedding in $W_0$. In fact it suffices that $X_0$ be only locally embedded, and the induced non-embedded resolutions is well defined: Suppose that $X_0$ is embedded in another smooth scheme, say $X_0\subset W'_0$, and we take as initial data $X_0\subset W'_0$, $(W'_0,E'_0=\emptyset)$. A property of this procedure will be that, 
in this case, we get the same data (\ref{fdri}) (the same schemes $X_i$, and the same functions $f_i$). In particular, when the input datum is simply $X_0$, we get the same
resolution (\ref{rsn4}).
In this way constructive resolution leads to the resolution of abstract varieties.

\begin{parrafo}\label{pt5}{\bf A Property of Constructive Resolution}.  

%One can also formulate this question in a more general frame:
Take $X_0\subset W_0$, $(W_0, E_0)$, and let (\ref{ressm}) be the  sequence obtained by the constructive resolution. Let $\sigma_0: V_0 \longrightarrow W_0$ be a smooth morphism. For each index $i$, $0\leq i \leq r$, consider the sequence of blow-ups
 \begin{equation}\label{}
\xymatrix@C=2.5pc{
 W_0 & W_1\ar[l]_{\ \ \ \ \ \pi_{Y_1}} &\dots  \ar[l]_{\pi_{Y_2}} &  W_i\ar[l]_{\!\!\!\!\pi_{Y_i}} 
,}\end{equation}
By taking fiber products of $\sigma_0$ with this sequence one obtains smooth schemes $V_i$ and smooth morphisms, 
say $\sigma_i: V_i \longrightarrow W_i$. Now set 
$X'_i= \sigma_i^{-1}(X_i)$, and let $E'_i$ be defined by taking pull-backs of the hypersurfaces in $E_i$. Define also 
$$ f'_i: X'_i\longrightarrow T$$
by setting $f'_i(x)=f_i(\sigma_i(x))$, for each $f_i$ in (\ref{fdri}).

In this way, if we fix $X_0\subset W_0$, $(W_0, E_0)$, and  
a smooth morphism $\sigma_0: V_0 \longrightarrow W_0$, the constructive resolution (\ref{ressm}) gives rise to a sequence 
\begin{equation}\label{resssns}
\xymatrix@R=0pc@C=3.7pc{
 X'_0 &  X'_1  & & X'_{r}\\
(V_{0}, E'_0)  & (V_1,E'_1)\ar[l]_{\ \pi_{\sigma_0^{-1}(Y_0)}} & \ \ \dots\ \ \ar[l]_{\ \ \ \ \ \pi_{\sigma_1^{-1}(Y_1)}} & (V_{r},E'_{r})\ar[l]_{\!\!\pi_{\sigma_{r-1}^{-1}(Y_{r-1})}}
}
\end{equation}
and to functions:
\begin{equation}\label{fdrj1}
 f'_0: X'_0\longrightarrow T, \quad \ f'_1: X'_1\longrightarrow T, \ \ \dots \quad, f'_{r-1}: X'_{r-1} \longrightarrow T.
 \end{equation}

With the setting as above, the data in (\ref{fdrj1}), and hence  (\ref{resssns}), will coincide with those obtained from the constructive resolution when the input data are  $X'_0\subset V_0$, $(V_0,E'_0)$. Namely, the functions 
$f'_i: X'_i\longrightarrow T$ coincide with the functions defining the constructive resolution. 

We express this property by saying that the constructive resolution is {\em compatible with pull-backs obtained by smooth morphisms}.
This will encompass restrictions to open sets, and also to \'etale neighborhoods. This last property is useful when applying arguments which require \'etale topology. Further properties of constructive resolution, such as equivariance (by group actions on $X$), compatibility with change of the base fields (at least for finite field extensions), and others, grow from this naive compatibility. 

%Any reduced scheme $X_0$ of finite type over a field $k$ can be locally embedded in a smooth scheme, say by a closed immersion
%$X_0\subset W_0$. One can view this as $X_0\subset W_0$, $(W_0,E_0=\emptyset)$, and the proceed to resolve singularities as before, in terms of the functions on $T$.

%Moreover, the functions allow us to construct a resolution of $X_0$ with independence of the embedding, namely a resolution of $X_0$, say (\ref{rsni}), which is also compatible with smooth morphisms on $X_0$, with no 
%reference to an embedding in a smooth $W_0$, i.e.,
% \begin{equation}\label{rsn4}
%\xymatrix@C=2.5pc{
% X_0 & X_1\ar[l]_{\ \ \ \ \ \pi_{Y_1}} &\dots  \ar[l]_{\pi_{Y_2}} &  X_r\ar[l]_{\!\!\!\!\pi_{Y_r}} 
%}\end{equation}
%by taking $Y_i=\Max f_i$, where $\Max f_i$ denotes the set of points in 
%$X_i$ where $f_i$ takes its highest value.

%This is important because the way we choose these closed immersions on smooth schemes is only local, and of course these smooth schemes are not unique. 
%%So patching has a lot to do with the problem, and patching relies on the main invariants used for resolution of singularities.

A similar property will hold for the non-embedded case, or say for the constructive resolution of an abstract variety. Let  (\ref{rsn4})
be the resolution obtained when the input datum is $X_0$, and let $\sigma_0: X'_0 \longrightarrow X_0$ be a smooth morphism. By taking fiber products with (\ref{rsn4}) we get
\begin{equation}\label{rsn6}
\xymatrix@C=2.5pc{
 X'_0 & X'_1\ar[l]_{\ \ \ \ \ \pi_{Y'_1}} &\dots  \ar[l]_{\pi_{Y'_2}} &  X'_r=X'\ar[l]_{\!\!\!\!\pi_{Y'_r}} 
}
\end{equation}
($ Y'_i=\sigma_i^{-1} (Y_i) $), and smooth morphisms $\sigma_i:X_i'\longrightarrow X_i$, and also functions, say
$$ f'_0: X'_0\longrightarrow T, \quad \ f'_1: X'_1\longrightarrow T, \ \ \dots \quad,  f'_{r-1}: X'_{r-1} \longrightarrow T,$$
($f'_i(x)=f_i(\sigma_i(x))$).
The property is that this is the constructive resolution of $X'_0$. In other words, the functions $f'_i: X'_i\longrightarrow T$ are those defined by the constructive resolution when the input datum is $X'_0$.
\end{parrafo}

Closely related to resolution of singularities is Hironaka's principalization theorem. Also known as Log-principalization of Ideals.
\end{parrafo}

\begin{theorem}[{\bf Principalization of
ideals}] \label{principalization}
Let $W_0$ be a smooth scheme over a field $k$ of characteristic zero.
Given $ J\subset\calo_{W_{0}} $, a non-zero sheaf of ideals, there
is a sequence of blow-ups along closed smooth centers, say (\ref{ressol}),  such that:
\begin{enumerate}
\item[(i)] The morphism $W_0\longleftarrow W_r$ is an
isomorphism over $W_0\setminus V(J)$. \item[(ii)] The sheaf
$J\calo_{W_r}$ is invertible and supported on a divisor with normal
crossings, i.e.,
\begin{equation}
{\mathcal {L}}=J\calo_{W_r}=\cali(H_1)^{c_1}
\cdot\cdots\cdot\cali(H_s)^{c_s},
\end{equation}
where $E^{\prime}=\{ H_1,H_2,\cdots, H_s\}$ are regular
hypersurfaces with only normal crossings.%,  $c_i\geq 1$ for
%$i=1,\ldots,s$, and $E^{\prime}=E_r$ if $V(J)$ has no components
%of codimension 1.
\end{enumerate}

\end{theorem}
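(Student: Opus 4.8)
The plan is to deduce principalization from the resolution of the basic object $(W_0,(J,b))$ described in the Introduction, applied with the initial integer $b=1$. First I would set up the starting data: take $N=W_0$, $F=\emptyset$, and consider the pair $(J,1)$ together with $(W_0,E_0=\emptyset)$. Since $\Sing(J,1)=\{x\in W_0\mid\nu_x(J)\ge 1\}=V(J)$, solving the Main Problem for $(W_0,(J,1))$ produces a sequence of blow-ups
\begin{equation*}
\xymatrix@C=2.5pc{
(W_{0}, E_0) & (W_{1}, E_1)\ar[l]_{\ \ \pi_{Y_1}} &\dots  \ar[l]_{\ \ \ \ \ \pi_{Y_2}} &  (W_{r}, E_r)\ar[l]_{\!\!\! \pi_{Y_r}}
}
\end{equation*}
along permissible smooth centers $Y_i\subset\Sing(J_i,1)$, with $E_r$ having normal crossings, and such that $\Sing(J_r,1)=\emptyset$. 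At each step the controlled transform factors as $J\calo_{W_i}=I(H_1)^{c_1}\cdots I(H_i)^{c_i}\overline{J}_i$ where the $H_j$ are the exceptional hypersurfaces and $\overline{J}_i$ is the ``residual'' ideal.

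The key observation is that $\Sing(J_r,1)=\emptyset$ means $\nu_x(\overline{J}_r)=0$ for every $x\in W_r$ (here I am using that after the monomial part has been stripped off, what remains of the order is carried by $\overline{J}_r$; more precisely $\max\word_r=0$ forces $\overline{J}_r=\calo_{W_r}$, exactly as noted after \eqref{indes1} in the Introduction). Hence $J\calo_{W_r}=I(H_1)^{c_1}\cdots I(H_s)^{c_s}=:\mathcal L$ with $E'=\{H_1,\dots,H_s\}$ a set of regular hypersurfaces having only normal crossings, which is statement (ii). For statement (i), I would argue that each center $Y_i$ is contained in $\Sing(J_i,1)$, which maps into $V(J)$ under the composition $W_i\to W_0$; therefore the whole sequence of blow-ups is an isomorphism over the open set $W_0\setminus V(J)$, since no center meets the preimage of that open set and blow-ups along centers disjoint from an open set restrict to isomorphisms there.

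The main obstacle is really the existence half: producing the sequence that achieves $\Sing(J_r,1)=\emptyset$ while keeping all centers permissible (normal crossings with the accumulated exceptional divisors). This is precisely the content of the constructive resolution of basic objects built from the inductive functions $(\word_i^{(d)},n_i^{(d)})$ and induction on $d=\dim W_0$, and it is the substantial theorem that the rest of the paper develops; here I am entitled to invoke it. Once that machinery is granted, the passage to principalization is the short bookkeeping argument above — one only needs to check that the residual ideal becomes the whole structure sheaf (equivalently $\max\word_r=0$), that the exceptional locus carries the normal crossings property by construction, and that disjointness of the centers from $W_0\setminus V(J)$ gives the isomorphism statement. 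A minor point worth spelling out is that one must choose $b=1$ (rather than an arbitrary $b$) so that $\Sing(J,b)$ is exactly $V(J)$ and so that the final factorization of $J\calo_{W_r}$ has integer exponents with no leftover $\overline{J}_r$ contributing to the support of $\mathcal L$.
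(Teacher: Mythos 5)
Your proposal is correct and matches the paper's intended route: the paper (see the discussion in \ref{210}) reduces Log-principalization to resolving the basic object $(W_0,(J,1),E_0)$ with $b=1$, exactly as you do. One small cleanup: when $\Sing(J_r,1)=\emptyset$ the function $\word_r$ has empty domain, so appealing to ``$\max\word_r=0$'' is not quite the right justification; the direct chain is that $\Sing(J_r,1)=V(J_r)=\emptyset$ forces $J_r=\calo_{W_r}$, and then the total transform $J\calo_{W_r}$, being an exceptional monomial times $J_r$, is itself that monomial supported on $E_r$.
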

Here some hypersurfaces in $E^{\prime}$ might not be components of the exceptional locus of  $W_0\longleftarrow W_r$.

\begin{parrafo} {\bf  A first reformulation of the problem}.\label{reform} 

There is a reformulation of the resolution problems. It applies to Theorem of Embedded Resolution of Singularities (and consequently to Theorem of Non-Embedded Resolution of Singularities ), and also to Theorem of Principalization of ideals.
% will be explain in this Section \ref{210}. 
The rest of this Section 2 is devoted to the discussion of this reformulation, which appears already in \cite{Hir64}.

Recall that the input data in constructive (embedded) resolution are of the form $X\subset W$, $(W,E)$, and that the outcome is a resolution (\ref{ressm}). Each step is obtained by blowing up along a smooth scheme:
 \begin{equation}\label{la11}
\xymatrix@C=2pc@R=0pc{
 X_0 & X_1\\
(W_0 , E_0) & (W_1 ,E_1)\ar[l]_ {\pi_{Y_{0}}}
}\end{equation}
Here $X_1$ is the strict transform of $X_0$, and one can view  each 
$X_i$ as a closed subset of $W_i$ ($i=0,1$).
Hironaka points out that there is another context in which the data undergo a very similar law of transformation: Fix $(W_0 , E_0) $ as before, a coherent non-zero sheaf of ideals $J_0$ in $\calo_{W_0}$, and an integer $b>0$. We say that the 2-tuple $(J_0,b)$ is a \emph{pair}, and that $\B_0=(W_0, (J_0,b), E_0)$ is a \emph{basic object}.

Let $\nu_x(J_0)$ denote the order of $(J_0)_x$ at $\calo_{W_0,x}$. Define the \emph{singular locus of} $(J,b)$ as:
$$\Sing(J_0,b)=\{ x\in W_0\ |\ \nu_x(J_0)\geq b\}$$
which is a closed subset in $W_0$.

Let $Y_0$ be a closed smooth subscheme included in $\Sing(J_0,b)$ $(Y_0\subset \Sing(J_0,b)$), and assume that it has normal crossings with the hypersurfaces in $E_0$.
Let $W_0 \overset{\pi_{Y_0}}{\longleftarrow} W_1$ denote the blow-up along $Y_0$, and let $H_1$ be the exceptional hypersurface. There is a factorization
$$J_0\calo_{W_1}=I(H_1)^bJ_1$$
for some sheaf of ideals $J_1$ in $W_1$.
Define now $(J_1,b)$ as \emph{the transform of} $(J_0,b)$ in $W_1$; and set
 \begin{equation}\label{la12}
\xymatrix@R=0pc@C=2pc{
 (J_0,b) & (J_1,b) \\
(W_0 , E_0) &  (W_1 ,E_1)\ar[l]_{\pi_{Y_{0}}}
}\end{equation}
We also say that $\B_1=(W_1,(J_1,b),E_1)$ is the \emph{transform} of $\B_0$. Note that 
 the data are $\B_0=(W_0,(J_0,b), E_0)$, and that the transformation 
is defined when the center $Y_0$ is included in $\Sing(J_0,b)$ and has normal crossings with $E_0$. So, we always require this condition $Y_0 \subset \Sing(J_0,b)$, and if not, the transformation is not defined.

Given $\B_0=(W_0,(J_0,b),E_0)$, we say that
\begin{equation}\label{riic}
\xymatrix@R=0pc@C=2pc{
 (J_0,b)  & (J_1,b)  & & (J_{r},b)\\
(W_{0}, E_0) & (W_{1}, E_1)\ar[l]_{\pi_{Y_{0}}} & \cdots\ar[l]_{\pi_{Y_{1}}}   &(W_{r}, E_r)\ar[l]_ {\pi_{Y_{r-1}}}
}
\end{equation}
is a {\em resolution of the basic object} $\B_0$ if $\Sing(J_r,b)=\emptyset$.

We introduce now the notion of the pull-back of a basic objects by a smooth morphism. This might seem artificial at first sight as the resolution problem involves only blow-ups, which are birational, whereas a smooth morphism might not be birational. 
Special attention will be drawn here to the compatibility of constructive resolution with the pull backs by smooth morphisms(see \ref{pt5}), and the importance of this property will show up along the exposition.

Fix a basic object $\B=(W,(J,b),E)$  and a smooth morphism 
$\sigma: W' \longrightarrow W$. Define $\B'=(W',(J',b) ,E')$ where
$J'=J\calo_{W'}$, and $E'$ the set of pull-backs of hypersurfaces in 
$E$. Note that $\Sing(J',b)=\sigma^{-1}(\Sing(J,b))$.
Here $\B'$ is called the \emph{pull-back of} $\B$ by $\sigma: W' \longrightarrow W$, and it is denoted by
\begin{equation}\label{ldob}
\xymatrix@R=0pc@C=2pc{
 (J,b)  & (J',b) \\
(W , E) & (W' ,E')\ar[l]
}
\end{equation}
To avoid confusion it will be explicitly indicated when this notation applies to the pull-backs of 
basic objects, and when to the transformations obtained by a blow-up.
\end{parrafo}

\begin{parrafo}\label{crobo} 

Constructive resolution also applies to basic objects. It constructs a {\em resolution of a basic object} by means of suitably defined upper semi-continuous functions $g_i$ with values on a fixed totally ordered set $T$.
More precisely, given the input data $\B_0=(W_0,(J_0,b),E_0)$, it provides a specific  resolution sequence as (\ref{riic}), defined by upper semi-continuous functions:
\begin{equation}\label{nge}
 g_0: \Sing(J_0,b) \longrightarrow T, \quad \ g_1: \Sing(J_1,b) \longrightarrow T, \ \ \dots \quad , g_{r-1}: \Sing(J_{r-1},b) \longrightarrow T,
 \end{equation}
where 
 \begin{equation}
\begin{array}{ccc}
 (J_0,b) & & (J_1,b) \\
(W_0 , E_0) & \stackrel{\pi_{Y_{0}}}{\longleftarrow}  & (W_1 ,E_1)\\ 
\end{array}
\end{equation}
is defined by setting $Y_0=\Max g_0$, the set of points where $g_0: \Sing(J_0,b) \longrightarrow T$ takes its maximum value. Then set 
 \begin{equation}\label{rcdbo}
\begin{array}{ccc}
 (J_1,b) & & (J_2,b) \\
(W_1 , E_1) & \stackrel{\pi_{Y_{1}}}{\longleftarrow}  & (W_2 ,E_2)\\ %Y_1 & & 
\end{array}
\end{equation}
where $Y_1$ is defined as above by $g_1: \Sing(J_1,b) \longrightarrow T$, and so on. And it has the property that for some $r$ it provides a resolution. Namely, $\Sing(J_r,b)=\emptyset$.

A smooth morphism $\sigma_0: W_0' \longrightarrow W_0$ defines, say 
$\B'_0=(W'_0,(J_0',b),E_0')$, by taking pull-backs, and   $\Sing(J_0',b)=\sigma_0^{-1}(\Sing(J_0,b))$. Moreover, by taking successive pull-backs, (\ref{riic}) defines 
\begin{equation}\label{riicpr}
\xymatrix@R=0pc@C=2.8pc{
 (J'_0,b) & (J'_1,b) &  &  (J'_{r},b)\\
 (W'_{0}, E'_0) & (W'_{1}, E'_1)\ar[l]_{\pi_{Y'_0}}  & \cdots\ar[l]_{\ \ \ \ \ \ \pi_{Y_1'}}  &
(W'_{r}, E'_r)\ar[l]_{\!\!\!\!\pi_{Y'_{r-1}}},
}\end{equation}
together with smooth morphisms $\sigma_i:W'_i\longrightarrow W_i$, and functions:
\begin{equation}\label{ngee}
 g'_0: \Sing(J'_0,b) \longrightarrow T, \quad \ g'_1: \Sing(J'_1,b) \longrightarrow T, \ \ \dots \quad , g'_{r-1}: \Sing(J'_{r-1},b) \longrightarrow T,
 \end{equation}
by setting $g'_i(x)=g_i(\sigma_i(x)).$

This constructive procedure that leads to the resolution of basic objects, also has the property of compatibility with pull-backs, as discussed in \ref{pt5}. In fact, this last compatibility will lead us to the compatibility of constructive resolution with smooth morphism in \ref{pt5}.
\end{parrafo}

\begin{parrafo}{\bf Why do we consider basic objects?}\label{210}
  
  Hironaka points out that the problem of resolution of basic objects appears as a common ancestor of the theorem of embedded resolution and that of Log-principalization. 
  
  As for the latter, it is simple to check that a resolution of a basic object of the form $\B_0=(W_0, (J,1), E_0)$ (with $b=1$) defines 
  a Log principalization of $J$.
  
 Let us focus here on the relation with resolution of singularities.
Unfortunately it is not possible to attach to $X_0\subset W_0$, $(W_0,E_0)$, a basic object $\B_0=(W_0,(J_0,b), E_0)$, with the condition that $\Sing(J_0,b)=\Sing(X_0)$, and that this equality be preserved by transformations. When applying a blow-up as in (\ref{la11}), the law of transformations relating the ideal of definition of $X_0$, say $I(X_0) \subset \calo_{W_0}$, with that of $I(X_1) \subset \calo_{W_1}$ is called the {\em strict transform} of ideals. A law which is quite involved, whereas the law of transformation of basic objects in \ref{reform} is very simple.
 
 So the relation of resolution of basic objects with that of resolution of singularities requires some clarification. This leads us to the so called \emph{Hilbert Samuel function} and the \emph{Hilbert Samuel stratification}.

 Fix $x\in X$, and define $h: \mathbb N \longrightarrow \mathbb N$, where
 $h(k)=l(\calo_{X,x}/m^k_x)$ (length of the artinian ring). 
 The graph is an element in $\mathbb N^ \mathbb N$, say $h$ again. Order $\mathbb N^ \mathbb N$  lexicographically, and define the function 
 $$HS_X: X \longrightarrow \mathbb N^ \mathbb N, \ \ \ HS_X(x)=h.$$
 
This function is upper semi-continuous along the closed spectrum of $X$ (the subset of closed points of $X$), and can be easily extended (uniquely) to an upper semi-continuous function on $X$. Let $\overline h =\max HS_X$ denote the maximum value achieved by the function, and let $X(\overline h)\subset X$ be the set of points where such value is achieved. A closed point $x$ is in $X(\overline h)$ if and only if $HS_X(x)=h$.

 In general $X(\overline h)$ is not smooth. A Theorem of Bennett (see \cite{Ben}) states that if $Y\subset  X(\overline h)$ is a closed and  smooth subscheme, and if $X'$ is as in (\ref{la11}) (the strict transform of X), then 
  \begin{equation}\label{jurdw}
HS_{X}(\pi(x)) \geq HS_{X'}(x) 
 \end{equation}
 for any $x\in X'$. This is known as \emph{Bennett's inequality}. It ensures, in particular, that
 \begin{equation}\label{jurd}
\overline h= \max HS_{X}\geq \max HS_{X'}. 
 \end{equation}
 
 Now set $X'(\overline h)$ as the points in $X'$ where the function $HS_{X'}$ takes value $\overline h$. If $\overline h= \max HS_{X}> \max HS_{X'} $ then $X'(\overline h)=\emptyset$. But if 
 $\overline h= \max HS_{X'}$,  then 
 $X'(\overline h)$ is not empty and it makes sense to define
 \begin{equation}\label{la22}
\begin{array}{ccccc}
 X & & X'& &X''\\
W & \overset{\pi_Y}{\longleftarrow} & W' &\overset{\pi_{Y'}}{\longleftarrow} &W''
\end{array}
\end{equation}
by taking $Y'\subset X'(\overline h)$ as before. In this case 
$\overline h= \max HS_{X}= \max HS_{X'}\geq \max HS_{X''}. $

Define now $X''(\overline h)$ as before, thus it is empty if and only if 
$\overline h> \max HS_{X''}$. 
The following result of Hironaka shows that basic objects relate to this 
setting, in which we start with a closed immersion $X\subset W$, and consider $\overline h= \max HS_{X}$. The next theorem assigns to these data a basic object, and one should draw attention to the fact that this assignment is only local.
 
\begin{theorem}\label{H3}{\rm({\bf Hironaka})} (First version)  Fix $X\subset W$ and $(W,E)$, and set $\overline h =\max HS_X$. After replacing $X\subset W$ and $(W,E)$ by restriction to an \'etale cover of $W$, we may assume that there is a basic object  $\B=(W,(J,b),E)$, so that 

\begin{enumerate}
\item $\Sing(J,b)= X(\overline h)$.

\item Set $X_0=X$, $W_0=W$, $E_0=E$, $J_0=J$.
For any sequence of transformations
\begin{equation}\label{riicww1}
\begin{array}{cccccccc}
 X_0& & X_1 & & X_2& &  &X_r\\
(W_{0}, E_0) & \overset{\pi_{Y_0}}{\longleftarrow} & (W_{1}, E_1) & \overset{\pi_{Y_1}}{\longleftarrow} &
(W_{2}, E_2) &\cdots  &\overset{\pi_{Y_{r-1}}}{\longleftarrow} &
(W_{r}, E_r)
\end{array}
\end{equation}
constructed with centers $Y_i\subset X_i(\overline h)$, there is a sequence of transformations of basic objects, obtained with the same centers, say 
\begin{equation}\label{riicww}
\begin{array}{cccccccc}
 (J_0,b) & & (J_1,b) & & (J_2,b) & &  &(J_r,b)\\
(W_{0}, E_0) & \overset{\pi_{Y_0}}{\longleftarrow} & (W_{1}, E_1) & \overset{\pi_{Y_1}}{\longleftarrow} &
(W_{2}, E_2) &\cdots  &\overset{\pi_{Y_{r-1}}}{\longleftarrow} &
(W_{r}, E_r)
\end{array}
\end{equation}
and $$\Sing(J_i,b)=X_i(\overline h)$$
for all index $0\leq i \leq r$. Moreover, any sequence (\ref{riicww}) induces a sequence (\ref{riicww1}), and the previous equalities hold.
\end{enumerate}
\end{theorem}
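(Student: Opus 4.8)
The plan is to establish the statement étale-locally at a closed point $x\in X(\overline{h})$ and then to observe that both sides of the asserted equality --- the top Hilbert--Samuel stratum and the singular locus of a basic object --- are compatible with the étale covers that occur, so that the local statement suffices. After replacing $X\subset W$ and $(W,E)$ by a suitable étale neighbourhood of $x$, we may assume $I(X)\subset\calo_W$ is generated by global functions $f_1,\dots,f_m$ forming a \emph{standard basis} adapted to $x$, with orders $n_1=\nu_x(f_1)\le\dots\le n_m=\nu_x(f_m)$; recall that the defining feature of a standard basis is that the initial forms $\In_x(f_i)$ generate the tangent cone ideal of $\calo_{X,x}$, so that $HS_X(x)=\overline{h}$ is encoded by the $n_i$ together with the diagram of initial exponents. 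I then choose a common multiple $b$ of $n_1,\dots,n_m$ and set
$$J=\sum_{i=1}^{m} f_i^{\,b/n_i}\,\calo_W.$$

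The first task is to check that $\Sing(J,b)=X(\overline{h})$ on this neighbourhood. Since $J$ is generated by the elements $f_i^{\,b/n_i}$, for a point $y$ one has $\nu_y(J)\ge b$ if and only if $\tfrac{b}{n_i}\nu_y(f_i)\ge b$ for every $i$, i.e. if and only if $\nu_y(f_i)\ge n_i$ for all $i$. It remains to identify the latter condition with $HS_X(y)=\overline{h}$: the implication ``$HS_X(y)=\overline{h}\Rightarrow$ no order drops'' follows from upper semicontinuity of the order together with the fact that the Hilbert--Samuel function cannot remain maximal once some order of the standard basis drops; conversely, if no order of the fixed standard basis drops at $y$, then the initial forms at $y$ generate a tangent cone with the same Hilbert function, so $HS_X(y)\ge\overline{h}$, and equality follows by maximality. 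This last implication is the substantive point and is exactly the content of Hironaka's theory of standard bases (\cite{Hir64}, \cite{Ben}); I would quote it rather than redo it.

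For part (2) I would induct on the length $r$, reducing to a single permissible blow-up $W_0\overset{\pi_{Y_0}}{\longleftarrow}W_1$ with $Y_0\subset X_0(\overline{h})=\Sing(J_0,b)$. Such a $Y_0$ is simultaneously a legitimate centre for the transformation of $\B_0$ (because $Y_0\subset\Sing(J_0,b)$) and for the strict transform of $X_0$ (because $Y_0\subset X_0(\overline{h})\subset X_0$), so the diagrams (\ref{riicww1}) and (\ref{riicww}) determine one another through their common centre, and what must be shown is $\Sing(J_1,b)=X_1(\overline{h})$. Bennett's inequality (\ref{jurd}) already gives $\max HS_{X_1}\le\overline{h}$, so $X_1(\overline{h})$ is the locus of equality. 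After a further étale shrinking around a point of $X_1(\overline{h})$, I would track the strict transforms $\widetilde{f}_i$ of the $f_i$: writing $f_i\calo_{W_1}=I(H_1)^{c_i}\widetilde{f}_i$ with $c_i=\nu_\eta(f_i)\ge n_i$ ($\eta$ the generic point of $Y_0$), one gets $f_i^{\,b/n_i}\calo_{W_1}=I(H_1)^{(b/n_i)c_i}\,\widetilde{f}_i^{\,b/n_i}$ with $(b/n_i)c_i\ge b$; this simultaneously re-derives $J_0\calo_{W_1}=I(H_1)^bJ_1$ and expresses $J_1$ through the $\widetilde{f}_i^{\,b/n_i}$. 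One then uses Hironaka's result that the $\widetilde{f}_i$ form a standard basis of $I(X_1)$ at the points of $X_1(\overline{h})$ --- again controlled by Bennett's inequality --- and repeats the order computation of the first task to conclude $\Sing(J_1,b)=X_1(\overline{h})$. The converse direction, that a sequence (\ref{riicww}) induces a sequence (\ref{riicww1}) with the strata matching, requires nothing new: at each stage the centre lies in the common closed set $X_i(\overline{h})=\Sing(J_i,b)$ and is therefore admissible on both sides at once.

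The main obstacle, at the outset and again under each blow-up, is the transition between ``no order of a fixed standard basis drops'' and ``the Hilbert--Samuel function does not drop''. Pointwise this is classical, but here the equality $\Sing(J,b)=X(\overline{h})$ must hold on a whole neighbourhood and be preserved by permissible blow-ups, which forces one to control the diagram of initial exponents along the entire top stratum and to understand its behaviour under blow-up; it is precisely the impossibility of arranging this Zariski-locally that makes the étale-local formulation of the theorem unavoidable. The accompanying bookkeeping with the boundary $E$ --- that $E_1$ is again a normal crossings set and that permissibility for $(W_i,E_i)$ is respected --- is routine, and I would only flag it.
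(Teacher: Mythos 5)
The paper does not prove Theorem \ref{H3}: it is stated with the attribution \textbf{(Hironaka)} and is meant as a citation to \cite{Hir64} (and its refinements in \cite{Hironaka77}, see also \cite{EncVil97:Tirol}), so there is no in-text argument to compare against. On its own terms your sketch is the standard route and is essentially correct: fix, \'etale-locally along the top Hilbert--Samuel stratum, a standard basis $(f_1,\dots,f_m)$ of $I(X)$ with orders $n_1\le\dots\le n_m$; take a common multiple $b$ and set $J=\sum_i f_i^{\,b/n_i}$, so that $\nu_y(J)\ge b$ is exactly the locus where no $\nu(f_i)$ drops below $n_i$; and invoke the Hironaka--Bennett theory both to identify this locus with $X(\overline h)$ and to propagate the standard-basis property to strict transforms under permissible blow-up. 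You also correctly identify where the hard content lies (the equivalence ``no order drops'' $\Leftrightarrow$ ``$HS_X$ stays maximal'', pointwise and along the stratum, and its stability under blow-up), and correctly observe that the need to control this along the entire stratum is what forces the \'etale cover.

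One refinement to your step under a single blow-up: the generic point $\eta$ of a permissible centre $Y_0\subset X_0(\overline h)$ satisfies both $\nu_\eta(f_i)\ge n_i$ (by your characterisation of $X_0(\overline h)$) and $\nu_\eta(f_i)\le n_i$ (upper semicontinuity of order on a neighbourhood of $x$, where $\nu_x(f_i)=n_i$), hence $c_i=\nu_\eta(f_i)=n_i$ exactly, not merely $c_i\ge n_i$ as you wrote. This gives $J_0\calo_{W_1}=I(H_1)^b\bigl(\sum_i\widetilde f_i^{\,b/n_i}\bigr)$ with $J_1=\sum_i\widetilde f_i^{\,b/n_i}$ on the nose. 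With only $c_i\ge n_i$ some generators of $J_1$ would carry residual factors $I(H_1)^{(b/n_i)c_i-b}$, which inflate $\nu_y(J_1)$ for $y\in H_1$, and the inclusion $\Sing(J_1,b)\subset X_1(\overline h)$ would then need a separate argument along the exceptional divisor. With the equality, the order computation repeats verbatim at points of $X_1(\overline h)$ and your induction closes as outlined.
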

The theorem says that if $HS_{X_i}$ is the Hilbert Samuel function of $X_i$, then $\Sing(J_i,b)$ is the closed set of points $x$ where $HS_{X_i}(x)=\overline h$. It also says that
$$ \overline h=\max HS_X > \max HS_{X_r}$$
if and only if (\ref{riicww}) is a resolution of the basic object.

In particular, if (\ref{riicww}) is a resolution of the basic object, then 
$$  \overline h=\max HS_X =\max HS_{X_1}=\cdots = \max HS_{X_{r-1}}> \max HS_{X_r}.$$

Note here that the Theorem says that {\em after restriction to a cover} there are basic objects attached to the highest value $\overline h$.  Theorem \ref{H3} guarantees the
existence of a basic object satisfying the properties as described there. However,
there may be many such basic objects, and a priori we don't know if the resolutions
of these basic objects, only locally defined over an \'etale cover, would patch to
provide the global resolution. Still, we state the following optimism that such a
patching can be done.
\begin{claim} (Optimistic) 
If we
know how to construct resolutions of the basic objects attached to the highest value $\overline{h}$ by this Theorem, then a sequence of blow-ups at closed smooth centers over $X\subset W$, $(W,E)$, can be constructed so that the maximum value of the Hilbert Samuel function, say $\overline h$, drops. 
\end{claim}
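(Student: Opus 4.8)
The plan is to deduce the claim from the resolution of basic objects: Theorem \ref{H3} translates the Hilbert Samuel data of $X$ into a basic object (locally, over an \'etale cover), and the constructive resolution of basic objects, together with its compatibility with smooth morphisms recalled in \ref{crobo}, is what allows one to globalize and to read off the drop of $\overline h$ from Bennett's inequality. Concretely, I would first choose an \'etale cover $\{W_\alpha\to W\}$ of $W$ on which Theorem \ref{H3} applies: on each $W_\alpha$ there is a basic object $\B_\alpha=(W_\alpha,(J_\alpha,b),E_\alpha)$ with $\Sing(J_\alpha,b)=X_\alpha(\overline h)$ (where $X_\alpha$ is the pull-back of $X$), this equality being preserved under transformations whose centers lie in the successive sets $X_{\alpha,i}(\overline h)$. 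Running the constructive resolution of basic objects on $\B_\alpha$ yields, over each chart, a sequence of blow-ups whose centers are $\Max g_i\subset\Sing(J_{\alpha,i},b)=X_{\alpha,i}(\overline h)$, hence smooth and permissible for $(W_{\alpha,i},E_{\alpha,i})$.

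The next step is to patch these local sequences into one sequence of blow-ups over $X\subset W$, $(W,E)$. The tool is the compatibility of the constructive resolution of basic objects with pull-backs by smooth (in particular \'etale) morphisms, as in \ref{crobo}: over an overlap $W_\alpha\times_W W_\beta$ the restrictions of the two local resolutions agree with the constructive resolutions of the pull-backs of $\B_\alpha$ and of $\B_\beta$, and these two pull-backs are basic objects attached --- in the sense of Theorem \ref{H3} --- to the same value $\overline h$ of the Hilbert Samuel function of $X$. Hence it suffices to know that the constructive resolution assigns the same centers to any two basic objects satisfying the conclusion of Theorem \ref{H3}; this is an invariance statement of the type underlying the notion of weak equivalence of basic objects (cf. \cite{Villa92}, \cite{BHV}). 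Granting it, the functions $g_i$, and with them the centers $Y_i=\Max g_i$, descend and glue to a global sequence of blow-ups as in (\ref{riicww1}), with every $Y_i$ smooth, permissible for $(W_i,E_i)$, and contained in $X_i(\overline h)$.

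It then remains to read off the drop of $\overline h$. Since each center lies in $X_i(\overline h)$, Theorem \ref{H3}(2) guarantees that this global sequence is induced by a sequence of transformations of basic objects, as in (\ref{riicww}), with $\Sing(J_i,b)=X_i(\overline h)$ for all $i$; because the constructive resolution of basic objects terminates, $\Sing(J_r,b)=\emptyset$, hence $X_r(\overline h)=\emptyset$. On the other hand, as every center was taken inside the corresponding $X_i(\overline h)$, Bennett's inequality (\ref{jurdw})--(\ref{jurd}) gives $\overline h=\max HS_X\geq\max HS_{X_r}$; combined with $X_r(\overline h)=\emptyset$ --- which says that the value $\overline h$ is nowhere attained on $X_r$ --- this forces $\max HS_{X_r}<\overline h$, i.e., the maximum value of the Hilbert Samuel function has strictly dropped.

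The step I expect to be the main obstacle is the globalization of the second paragraph. Theorem \ref{H3} only produces basic objects locally, over an \'etale cover, and a priori distinct admissible choices --- even over a single chart --- could lead to different centers; what is genuinely needed is a theorem asserting that the constructive resolution of a basic object depends only on the closed set $\Sing(J,b)$ together with its law of transformation, and not on the particular pair $(J,b)$ representing it. Once such an invariance result is available the descent is formal, and the remainder of the argument (Theorem \ref{H3}(2) together with Bennett's inequality) is routine.
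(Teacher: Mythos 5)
Your proposal is correct and follows essentially the same route as the paper: reduce to the local statement via Theorem~\ref{H3}, run the constructive resolution of the local basic objects, patch over overlaps of the \'etale cover using compatibility with pull-backs, and read off the drop of $\overline h$ from Bennett's inequality combined with $X_r(\overline h)=\emptyset$. The ``main obstacle'' you flag at the end is exactly what the paper isolates: it is Hironaka's notion of \emph{weak equivalence} (Definition~\ref{weq}), the observation that the basic objects produced by Theorem~\ref{H3} on overlaps are weakly equivalent, and the resulting ``Main Challenge'' (Remark~\ref{rkH4}) to define a resolution procedure that depends only on the weak-equivalence class; this is the content the rest of the paper is devoted to (Hironaka's function $\ord$ and the satellite functions built from it). The one small stylistic difference is that the paper actually states the comparison via Theorem~\ref{H4} rather than Theorem~\ref{H3}, since the weak-equivalence test involves pull-backs by smooth morphisms, which are precisely what Theorem~\ref{H4} adds to the picture.
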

%XXXXXXXXXXXXXXXXXXXXXXXX
Before we carry on with the discussion and justification of the claim, let us indicate that there is another Theorem of Hironaka which says that, in order to prove resolution of singularities of $X$, it suffices to prove 
that given $X\subset W$, $(W,E)$, a sequences of transformation as above can be constructed so that 
$ \overline h=\max HS_X > \max HS_{X_r}$ (see Chapter 8 in \cite{EncVil97:Tirol}). Namely, this procedure will not go for ever, and if $X$ is a reduced variety, then this procedure applied finitely many times, will lead to say $ X_{r'}$ regular.
%
%XXXXXXXXXXXXXXXXXXXXXXXXX
%ACLARAR
%Furthermore, a Theorem of Hironaka says that, in order to prove resolution of singularities, it suffices to prove 
%that given $X\subset W$, $(W,E)$, a sequences of transformation as above can be constructed so that 
%$ \overline h=\max HS_X > \max HS_{X_r}$.
%XXXXXXXXXXXXXXXXXXXXXXXXX

In other words, this last Theorem together with Theorem \ref{H3} says that resolution of singularities can be achieved if we know how to obtain resolution of basic objects in some clever way (so that the claim holds).

Theorem \ref{H3} says that given $X\subset W$, $(W,E)$, and setting $\overline h=\max HS_X$, one can attach to this value a basic object {\em after restriction to a cover of $W$}.
Let us emphasize that this provides local
solutions to the problem of resolution of singularities over the open subsets of
a cover, and that the remaining issue is to figure out how to patch
these local solutions to provide a global solution.

\end{parrafo}
\begin{parrafo} {\bf On weak equivalence and a closer view of our Optimistic Claim:}

Fix $X\subset W$, $(W,E)$ and a smooth morphism $\sigma: W' \longrightarrow W$, then new data, say $X'\subset W'$, $(W',E')$, are obtained by taking pull-backs. If $\sigma: W' \longrightarrow W$ is an open immersion what we get is the usual restriction.
Of course the formulation of resolution of singularities involves only blow-ups along smooth centers,
and not smooth morphisms. Pull-backs by smooth morphisms are to be thought of as {\em auxiliary} maps, and they will be essential in the proof of the previous claim.

%But smooth morphisms will be considered in order to address properties as discussed \ref{pt5}.
%
%A similar notion was defined for basic objects: given a basic object $\B=(W,(J,b),E)$ and a smooth morphism $\sigma: W' \longrightarrow W$, define $\B'=(W',(J',b),E')$ similarly, where now 
%$J'=J\calo_{W'}$. Note that $\Sing(J',b)=\sigma^{-1}(\Sing(J,b))$.

\begin{definition}\label{lsbo}
Given a basic object $\B_0=(W_0,(J_0,b),E_0)$, define a {\em local sequence} as 
\begin{equation}\label{rlcww}
\begin{array}{cccccccc}
 (J_0,b) & & (J_1,b) & & (J_2,b) & &  &(J_r,b)\\
(W_{0}, E_0) & \longleftarrow & (W_{1}, E_1) & \longleftarrow &
(W_{2}, E_2) &\cdots  &\longleftarrow &
(W_{r}, E_r)\end{array}
\end{equation}
where each 
 \begin{equation}\label{nls}
\begin{array}{ccc}
 (J_i,b) & & (J_{i+1},b) \\
(W_i , E_i) & \longleftarrow & (W_{i+1} ,E_{i+1})
\end{array}
\end{equation}
is either:
\begin{enumerate}
\item[A)] a blow-up with center $Y_i \subset \Sing(J_i,b)$ as in (\ref{ldob}), say
  \begin{equation}\label{nls1}
\begin{array}{ccc}
 (J_i,b) & & (J_{i+1},b) \\
(W_i , E_i) & \overset{\pi_{Y_i}}{\longleftarrow} & (W_{i+1} ,E_{i+1})
\end{array}
\end{equation}

\item[B)] or obtained by a smooth morphism $\sigma_i: W_{i+1} \longrightarrow W_i$, and setting $(J_{i+1},b)$ and $(W_{i+1} ,E_{i+1})$ by pull-backs.
\end{enumerate}
\end{definition}

A basic object $\B_0=(W_0,(J_0,b),E_0)$ defines a closed set 
$\Sing(J_0,b)$ in $W_0$. Moreover, for any local sequence as above it defines the closed set $\Sing(J_i,b)$ in $W_i$, $0\leq i \leq r$. There can be many local sequences defined for $\B_0$. So there are many closed sets defined in different smooth schemes, starting with $\B_0$ and considering all possible local sequences (\ref{rlcww}). 

Now we introduce an equivalence among basic objects, so that two basic objects are equivalent if and only if they define the same family of closed sets:

\begin{definition}\label{weq}
Let $\B_0=(W_0,(J_0,b),E_0)$, and $\B_0'=(W_0,(K_0,d),E_0)$ be two basic objects (same $(W_0,E_0)$). They are said to be  {\em weakly equivalent} if every local sequence of $\B_0$, say 
\begin{equation}\label{rlcww1}
\begin{array}{cccccccc}
 (J_0,b) & & (J_1,b) &  & &  &(J_r,b)\\
(W_{0}, E_0) & \longleftarrow & (W_{1}, E_1) & \longleftarrow &\cdots  &\longleftarrow &
(W_{r}, E_r)
\end{array}
\end{equation}
defines a local sequence of $\B_0'$, say 
\begin{equation}\label{rlcww2}
\begin{array}{cccccccc}
 (K_0,d) & & (K_1,d) & & (K_2,d) & &  &(K_r,d)\\
(W_{0}, E_0) & \longleftarrow & (W_{1}, E_1) & \longleftarrow &
(W_{2}, E_2) &\cdots  &\longleftarrow &
(W_{r}, E_r)
\end{array}
\end{equation}
and $\Sing(J_i,b)=\Sing(K_i,d)$ $ 0\leq i \leq r.$
And conversely, any local sequence of $\B_0'$ defines a local sequence of $\B_0$ and both define the same closed sets.
\end{definition}

Intuitively we think of a basic object as an tool to define closed sets. 
In fact $\B_0=(W_0,(J_0,b), E_0)$ defines a closed set in $W_0$, namely $\Sing(J_0,b)$, it also defines closed sets by taking pull-backs by smooth morphisms and also by taking transforms as defined in (\ref{nls1}). So two basic objects $\B_0=(W_0,(J_0,b), E_0)$, and $\B_0'=(W_0,(K_0,d),E_0)$ are weakly equivalent when they define the same closed sets.
As a first example one can check that  $\B_0=(W_0,(J_0,b), E_0)$ and $\B'_0=(W_0,(J_0^2,2b), E_0)$ are equivalent. This abstract notion of equivalence will find now its justification as we formulate below the link of Hilbert Samuel stratification with basic objects in Theorem \ref{H4}.

Let us first extend the notion of local sequence for data of the form $X \subset W$, $(W, E)$.

\begin{definition}\label{fed14}
Given $X \subset W$, $(W, E)$, set $X_0=X$, $W_0=W$, $E_0=E$, and define a {\em local sequence} as 
\begin{equation}\label{rlcwwW}
\begin{array}{cccccccc}
 X_0 & & X_1 & & &  &X_r\\
(W_{0}, E_0) & \longleftarrow & (W_{1}, E_1) & \longleftarrow &\cdots  &\longleftarrow &
(W_{r}, E_r)
\end{array}
\end{equation}
where each 
 \begin{equation}
\begin{array}{ccc}
 X_i & & X_{i+1}\\
(W_i , E_i) & \longleftarrow & (W_{i+1} ,E_{i+1})
\end{array}
\end{equation}
is either:
\begin{enumerate}
\item[A)] A blow-up with center $Y_i$ having normal crossings with $E_i$ and included in $X_i$ as in (\ref{la1}), say
 \begin{equation}
\begin{array}{ccc}
 X_i & & X_{i+1} \\
(W_i , E_i) & \overset{\pi_{Y_i}}{\longleftarrow} & (W_{i+1} ,E_{i+1})
\end{array}
\end{equation}
Here $X_{i+1}$ denotes the strict transform of $X_i$.

\item[B)] A smooth morphism $\sigma_i: W_{i+1} \longrightarrow W_i$, and setting $X_{i+1}$ and $(W_{i+1} ,E_{i+1})$ by pull-backs.
\end{enumerate}
\end{definition}

\begin{parrafo}\label{ladeHS} The Hilbert Samuel function $HS_X: X \longrightarrow \mathbb N^\mathbb N$ can be defined for any scheme $X_i$ in the sequence, and they were compared in (\ref{jurdw}) for transformations of type A).

If  $\sigma_i: W_{i+1} \longrightarrow W_i$ is smooth and $x\in X_{i+1}$, we cannot claim that 
$HS_{X_{i+1}}(x)=HS_{X_{i}}(\sigma_i (x))$
unless the morphism is \'etale. But the value $HS_{X_{i+1}}(x)$ 
can be obtain from $HS_{X_{i}}(\sigma_i (x)) $ if we know the dimension of the fibers of $\sigma_i: W_{i+1} \longrightarrow W_i$. So even if it is not strictly true we will say that 
\begin{equation}\label{casosm}
HS_{X_{i+1}}(x)=HS_{X_{i}}(\sigma_i (x)).
\end{equation}
 Strictly speaking $HS_{X_{i+1}}$ stands here for a function which is not the Hilbert Samuel function but gives equivalent information. A precise statement about these facts can be found in \cite{EncVil97:Tirol}.
The following is a natural generalization of (\ref{jurd}).
\end{parrafo}
\begin{remark}\label{rk4}
Fix $X \subset W$, $(W, E)$. Set $X_0=X$, $W_0=W$, $E_0=E$, and fix a local sequence  (\ref{rlcwwW}). Let the functions $HS_{X_i}$ be defined as above, and set 
$$F_i=\Max HS_{X_i}=\{ x\in X_i\ |\ HS_{X_i}(x)=\max HS_{X_i}\}.$$
Assume that $Y_i \subset  F_i$ for every transformation of type A) in the sequence. Then 
$$ \max HS_{X_0}\geq  \max HS_{X_1} \geq \dots \geq  \max HS_{X_r}.$$
\end{remark}

For each sequence as above set  $\overline{h}=\max HS_X$ and let 
$X_i(\overline{h})$ be the set of points $x\in X_i$ so that $HS_{X_i}(x)=\overline{h}$. Note that $X_r(\overline{h})$ is not empty if and only if 
$$ \max HS_{X_0}= \max HS_{X_1} = \dots =  \max HS_{X_r}.$$
 \end{parrafo}
 
 The following extends the results in Theorem \ref{H3}, as it extends the class of morphisms involved among the transformations. It says that the link between the basic objects and Hilbert Samuel functions is even stronger as pull-backs obtained by smooth morphism are also considered.
 
 \begin{theorem}{\rm({\bf Hironaka})}\label{H4} Fix $X\subset W$ and $(W,E)$, and set $\overline h =\max HS_X$. After replacing $X\subset W$ and $(W,E)$ by restriction to a finite \'etale cover of $W$, a basic object $(W,(J,b),E)$ can be defined so that: 
\begin{enumerate}
\item  $\Sing(J,b)= X(\overline h)$.
\item Set $X_0=X$, $W_0=W$, $E_0=E$, $J_0=J$.
If a local sequence
\begin{equation}\label{rlcwwWW}
\begin{array}{cccccccc}
 X_0 & & X_1 & & &  &X_r\\
(W_{0}, E_0) & \longleftarrow & (W_{1}, E_1) & \longleftarrow &\cdots  &\longleftarrow &
(W_{r}, E_r)
\end{array}
\end{equation}
is constructed with centers $Y_i\subset X_i(\overline h)$ every time when $W_i \longleftarrow W_{i+1}$ is a transformation of type A), then the same sequence is a local sequence of the basic object, say 
\begin{equation}\label{seqboT}
\begin{array}{cccccccc}
 (J_0,b) & & (J_1,b) &  & &  &(J_r,b)\\
(W_{0}, E_0) & \overset{\pi_{Y_0}}{\longleftarrow} & (W_{1}, E_1) & \overset{\pi_{Y_1}}{\longleftarrow} &
\cdots  &\overset{\pi_{Y_{r-1}}}{\longleftarrow} &
(W_{r}, E_r)
\end{array}
\end{equation}
and $$\Sing(J_i,b)=X_i(\overline h)$$
for all index $0\leq i \leq r$. Moreover, any local sequence of the basic object (\ref{seqboT}) induces a local sequence (\ref{rlcwwWW}) with the previous conditions.
\end{enumerate}
\end{theorem}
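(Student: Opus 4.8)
The plan is to reduce everything to Theorem \ref{H3}, the point being that the only genuinely new feature of Theorem \ref{H4} is that a local sequence is now permitted to contain steps of type B (pull‑backs by smooth morphisms) alongside the steps of type A (blow‑ups) already treated. I would therefore take $(W,(J,b),E)$ to be \emph{exactly} the basic object produced by Theorem \ref{H3}, after the same passage to a finite \'etale cover, and prove that it already enjoys the stronger compatibility by induction on the length $r$ of the local sequence \eqref{rlcwwWW}, carrying along the invariant $\Sing(J_i,b)=X_i(\overline h)$. At a step of type A with centre $Y_i\subset X_i(\overline h)=\Sing(J_i,b)$, the transform $(J_{i+1},b)$ is defined (the centre lies in the singular locus), and $\Sing(J_{i+1},b)=X_{i+1}(\overline h)$ is precisely what Theorem \ref{H3} gives — it rests on Bennett's inequality \eqref{jurdw}--\eqref{jurd} together with the matching of the strict transform of $X_i$ with the transform of the pair — so in that case the induction step is already in hand, and all the remaining work concerns steps of type B.

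For a step of type B, given by a smooth morphism $\sigma_i\colon W_{i+1}\to W_i$, I would use two base‑change facts. First, $\Sing(J_i\calo_{W_{i+1}},b)=\sigma_i^{-1}(\Sing(J_i,b))$, the remark recorded just after \eqref{ldob}: it holds because a smooth morphism is flat with regular fibres, whence $\nu_{x'}(J_i\calo_{W_{i+1},x'})=\nu_{\sigma_i(x')}(J_i)$. Second, $X_{i+1}(\overline h)=\sigma_i^{-1}(X_i(\overline h))$ with the convention of \ref{ladeHS}: \'etale locally $\sigma_i$ factors as an \'etale map followed by a projection $W_i\times\mathbb{A}^n\to W_i$, the Hilbert--Samuel function is literally unchanged by the \'etale factor and is shifted by the fixed amount prescribed by $n$ under the projection (this is the meaning of \eqref{casosm} and of the precise statement quoted from \cite{EncVil97:Tirol}), so in either case the maximum value $\overline h$ is preserved and its stratum pulls back. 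Combining the two, $\Sing(J_{i+1},b)=\sigma_i^{-1}(\Sing(J_i,b))=\sigma_i^{-1}(X_i(\overline h))=X_{i+1}(\overline h)$, closing the induction. The converse assertion — that every local sequence of \eqref{seqboT} induces a local sequence \eqref{rlcwwWW} with the same equalities — runs the identical induction in reverse: a type‑A step of the basic object has its centre in $\Sing(J_i,b)=X_i(\overline h)\subset X_i$, hence is admissible for $X_i\subset W_i$, and a type‑B step is a pull‑back on both sides.

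There is one point I would treat as the real substance of the argument: one must know that the basic object supplied by Theorem \ref{H3} is itself stable under the pull‑backs occurring in type‑B steps, and not merely under the \'etale cover used to produce it. This amounts to checking that the construction of $J$ in \ref{H3} — carried out from a standard basis of the ideal of $X$ at the points of $X(\overline h)$ by the usual coefficient/idealistic‑exponent recipe — is insensitive to adjoining the ``free'' fibre coordinates of a projection $W_i\times\mathbb{A}^n\to W_i$, so that it commutes with such a projection and, being local and stable under \'etale maps, with an arbitrary smooth $\sigma_i$.

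The hard part is thus not the inductive bookkeeping but the comparison of Hilbert--Samuel data across a genuinely smooth (non‑\'etale) morphism: there $HS_{X_{i+1}}$ is not the Hilbert--Samuel function in the literal sense but its shift, and one must invoke the precise statement of \cite{EncVil97:Tirol} that $HS_{X_{i+1}}(x')$ is recovered from $HS_{X_i}(\sigma_i(x'))$ and the fibre dimension, so as to be sure that ``$\overline h$'' and ``$X_{i+1}(\overline h)$'' behave as in \ref{rk4} and \eqref{casosm}. Everything else — Bennett's inequality, the existence of the basic object, and the whole type‑A case — is already packaged inside Theorem \ref{H3}, so Theorem \ref{H4} is in effect that theorem together with the smooth‑base‑change compatibility of both the Hilbert--Samuel stratification and the basic object.
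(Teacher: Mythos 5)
The paper states Theorem~\ref{H4} as a result of Hironaka's without giving a proof; the only preparation in the text is the discussion of Hilbert--Samuel behaviour under smooth morphisms in \ref{ladeHS} and Remark~\ref{rk4}, so there is no in-paper proof to compare against. Your reduction --- take the basic object furnished by Theorem~\ref{H3} and argue by induction on the length of the local sequence, treating type-A steps via \ref{H3} and type-B steps via the two base-change identities $\Sing(J_i\calo_{W_{i+1}},b)=\sigma_i^{-1}(\Sing(J_i,b))$ and $X_{i+1}(\overline h)=\sigma_i^{-1}(X_i(\overline h))$ --- is the natural reconstruction, and the two base-change facts you invoke are exactly what the paper records after (\ref{ldob}) and in (\ref{casosm}).

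There is, however, a gap in the induction which you yourself name but do not close. After a type-B step you obtain the equality of sets $\Sing(J_{i+1},b)=X_{i+1}(\overline h)$. But at the next type-A step you want to quote Theorem~\ref{H3}, and \ref{H3} guarantees compatibility of a blow-up with the Hilbert--Samuel stratum only for the \emph{particular} basic object it constructs for $X_{i+1}\subset W_{i+1}$, not for an arbitrary pair whose singular locus happens to coincide with $X_{i+1}(\overline h)$ at one moment. The equality of closed sets at level $i+1$ is not by itself transported through a subsequent blow-up, because the transform of $(J_{i+1},b)$ and the strict transform of $X_{i+1}$ are governed by \emph{different} laws of transformation, and \ref{H3} is precisely the nontrivial assertion that they nevertheless agree for Hironaka's canonical $J$. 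So you genuinely need that the pulled-back pair $(W_{i+1},(J_i\calo_{W_{i+1}},b),E_{i+1})$ \emph{is again} a Hironaka basic object for $X_{i+1}\subset W_{i+1}$ in the sense of \ref{H3}; equivalently, that Hironaka's construction commutes with smooth base change. Your remark that this ``amounts to checking'' insensitivity to adjoining fibre coordinates is the correct formulation, but as written it is an assertion rather than a proof; to complete the argument you would need either a precise citation (e.g.\ the idealistic-exponent treatment in \cite{Hironaka77} or \cite{EncVil97:Tirol}) establishing stability of $J$ under pull-back by a smooth morphism, or an explicit local computation for the projection $W\times\mathbb{A}^n\to W$ (to which the general smooth case reduces \'etale-locally). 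Without one of these, the induction cannot advance past a type-B step.
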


 \begin{remark}\label{rkH4} 1) Fix $X\subset W$ and $(W,E)$,  $\overline h =\max HS_X$. Let $\B=(W,(J,b),E)$ be a basic object attached to the value $\overline h$ as in Theorem \ref{H4}. 
 
 Assume now that there is another basic object $\B'=(W,(K,d),E)$,  which is weakly equivalent to $\B=(W,(J,b),E)$. Then one can replace  $\B$ by $\B'=(W,(K,d),E)$ in Theorem \ref{H4}. In fact both $\B$ and $\B'$ are basic objects that provide the same closed sets. 
 \vskip 0.5cm
 
 2) {\em On the cover and the problem of patching}. 
 Note that Theorem \ref{H4} 
 does not claim that given $X\subset W$, $(W,E)$,  there is a basic object $\B=(W,(J,b),E)$ attached to the value $\overline h$ with the prescribed property. It says that this holds \emph{after restriction to an \'etale cover}. Let us insist on this point as it is the key for the definition of the functions in Constructive Resolution.

 Suppose that $U_{\lambda}$, $U_{\beta}$ are two charts of the cover of $W$, and that $X_{\lambda}\subset U_{\lambda}$, $(U_{\lambda},E_{\lambda})$ and $X_{\beta}\subset U_{\beta}$, $(U_{\beta},E_{\beta})$ are obtained by restriction. Hironaka asserts that there is a basic object $\B_\lambda=(U_{\lambda},(J_{\lambda},b_{\lambda}),E_{\lambda})$  attached to the value $\overline h=\max HS_X$ (and a basic object $\B_\beta=(U_{\beta},(J_{\beta},b_{\beta}),E_{\beta})$  attached to the same value $\overline h=\max HS_X$). Note in particular that a resolution of this first basic object defines a sequence of blow-ups of $X_{\lambda}\subset U_{\lambda}$, $(U_{\lambda},E_{\lambda})$ so that the final strict transform of $X_\lambda$ has a Hilbert-Samuel function which takes values strictly smaller than $\overline h$ at any point. As was indicated before, if we want to claim that there is a similar global statement for $X\subset W$, $(W,E)$
we have to make sure that the resolutions of the different basic objects, say  
$\B_\lambda$ 
and $\B_\beta$, somehow patch to provide a sequence of blow-ups along $W$. 
In this case the resolutions of these locally defined basic objects can be expressed as restrictions of a sequence of blow-ups along $(W,E)$.

Set formally $U_{\lambda, \beta}=U_{\lambda}\cap U_{\beta}$. 
Here $U_{\lambda, \beta}\longrightarrow U_{\lambda}$ is smooth and defines 
pull-backs both of $X_{\lambda}\subset U_{\lambda}$, $(U_{\lambda},E_{\lambda})$ and of  $\B_\lambda=(U_{\lambda},(J_{\lambda},b_{\lambda}), E_{\lambda})$.
We denote them by
$X_{\lambda, \beta}\subset U_{\lambda, \beta}$, $(U_{\lambda,\beta},E_{\lambda, \beta})$ and  $\B^\beta_\lambda=(U_{\lambda, \beta},(J^{\beta}_{\lambda},b_{\lambda}),E_{\lambda, \beta})$, respectively. 

Similarly,  $U_{\lambda, \beta}\longrightarrow U_{\beta}$ is smooth and defines 
pull-backs both of $X_{\beta}\subset U_{\beta}$, $(U_{\beta},E_{\beta})$ and of  $(J_{\beta},b_{\beta})$, $(U_{\beta},E_{\beta})$, say 
$X_{\lambda, \beta}\subset U_{\lambda, \beta}$, $(U_{\lambda,\beta},E_{\lambda, \beta})$ and  $\B_\beta^\lambda=(U_{\lambda, \beta},(J^{\lambda}_{\beta},b_{\beta}),E_{\lambda, \beta})$. 

Recall now Definition \ref{weq} in which two basic objects are defined to be weakly equivalent when they define the same closed sets, in a very precise way, involving all possible local sequences.

\vspace{0.3cm}

\noindent{\bf Main observations:} 
\begin{enumerate}
\item Each  $\B_\lambda=(U_{\lambda},(J_{\lambda},b_{\lambda}),E_{\lambda})$ 
is well defined only up to weak equivalence.

\item The basic objects $\B_\lambda^\beta$ and $\B_\beta^{\lambda}$ are weakly equivalent.
\end{enumerate}

These two main observations follow from the fact that the Theorem applies to  the same value $\overline h=\max HS_X$, both for $X_{\lambda}\subset U_{\lambda}$, $(U_{\lambda},E_{\lambda})$  and the basic object $\B_{\lambda}$, and also for  $X_{\beta}\subset U_{\beta}$, $(U_{\beta},E_{\beta})$  and the basic object $\B_{\beta}$.

\vspace{0.2cm}

\noindent{\bf Main Challenge:} Define, as in \ref{crobo}, a totally ordered set $T$ and a procedure of resolution of basic objects, by means of upper semi-continuous functions with values on $T$, so that two basic objects, 
say $\B_0=(W_0,(J_0,b),E_0)$ and $\B'_0=(W_0,(K_0,d),E_0)$, which are  weakly equivalent, undergo the same resolution (\ref{weq}).

The constructive resolution of basic objects will accomplish this requirement. Moreover, suppose that the constructive resolution of $\B_0$ is
\begin{equation}\label{riicc1}
\begin{array}{cccccccc}
 (J_0,b) & & (J_1,b) & & &  &(J_{r},b)\\
(W_{0}, E_0) & \overset{\pi_{Y_0}}{\longleftarrow} & (W_{1}, E_1) & \overset{\pi_{Y_1}}{\longleftarrow} &\cdots  &\overset{\pi_{Y_{r-1}}}{\longleftarrow} &
(W_{r}, E_r)
\end{array}
\end{equation}
defined recursively in terms of functions $ h_i: \Sing(J_i,b) \longrightarrow T$; and that
\begin{equation}
\label{riicc2}
\begin{array}{cccccccc}
 (K_0,d) & & (K_1,d) & & &  &(K_{r},d)\\
(W_{0}, E_0) & \overset{\pi_{Y'_0}}{\longleftarrow} & (W_{1}, E_1) & \overset{\pi_{Y'_1}}{\longleftarrow} &
\cdots  &\overset{\pi_{Y'_{r-1}}}{\longleftarrow} &
(W_{r}, E_r)
\end{array}
\end{equation}
is the resolution of $\B_0'$, defined in terms of functions, say $h'_i: \Sing(K_i,d) \longrightarrow T$. Then,
$$h_i=h'_i, \ \ 0\leq i \leq r$$
as functions on $\Sing(J_i,b)=\Sing(K_i,d)$, and in particular $Y_i=Y'_i$. This guarantees that two weakly equivalent basic objects will undergo the same constructive resolution.

The resolution of each basic object obtained by the constructive procedure will be defined so as to be compatible with weak equivalence. This will ensure the patching required, in order to come from Theorem \ref{H4} to that of Resolution of singularities {\em a la Hironaka}, namely by lowering, successively, the highest value of the Hilbert-Samuel function. 

This is what we had previously formulated here as the Optimistic Claim. 
 \end{remark}

\begin{parrafo} {\bf The elegance of Hironaka's philosophy: Functions compatible with weak equivalence.}

Theorem \ref{H4} indicates that basic objects are to be considered
up to weak equivalence. So Hironaka suggests us to view $\mathcal B=(W_0,(J_0,b),E_0)$) simply as a tool to define closed sets.
%With this scheme in mind he says that two basic objects, say $\B=(W_0,(J_0,b),E_0)$, and $\B'=(W_0,(K_0,d),E_0)$ are weakly equivalent when they define the same closed sets (see \ref{weq}).
And our Main Challenge is to find a totally ordered set $T$, and a procedure of constructive resolution of basic objects, by means of functions with values on $T$, in such a way that two weakly equivalent basic objects are treated in the same manner. This means that the upper semi-continuous functions defining the resolution should be the same for two basic objects that are weakly equivalent.  

The functions defining a resolution are expected to take maximum value on a {\em smooth} subschemes. But let us first leave aside this aspect of smoothness at this point, and simply draw attention on the definition of functions on basic objects which are compatible with weak equivalence.

The strategy is simple: 
\begin{center}
\begin{tabular}{|p{5.8in}|}
\hline  Find a totally ordered set $T$ and  assign to 
 {\em any} $\B=(W_0,(J_0,b)E_0)$
 an upper semi-continuous function $$h_\B:\Sing(J_0,b) \longrightarrow T$$ in such a way 
that the value $h_\mathcal B(x)\in T$ ($x\in \Sing(J_0,b)$) can be expressed in terms of the closed sets defined by $\mathcal B$.\\
\hline
\end{tabular}
\end{center}

\vspace{0.2cm}

 Recall here that 
by ``closed sets defined by a basic object'' Hironaka does not mean the closed set 
$\Sing(J_0,b)$, he means the closed sets $\Sing(J_i,b)$ in $W_i$, {\em for all possible local sequences of $\mathcal B$} (\ref{rlcww}).

No matter how abstract this approach might seem at first sight, what is  clear is that if $\B=(W_0,(J_0,b),E_0)$, and $\B'=(W_0,(K_0,d),E_0)$ are weakly equivalent the two functions  $$h_\mathcal B:\Sing(J_0,b) \longrightarrow T \mbox{ and  } h_\mathcal {B'}:\Sing(K_0,d) \longrightarrow T$$ are the same (recall that $\Sing(J_0,b)=\Sing(K_0,d)$).

The following is the main example of a totally ordered set $T$ and of functions $h_\B$ which fulfill the previous condition:

\begin{theorem}\label{lfdhi} Take $T=\mathbb Q$ as the totally ordered set. Given $\B=(W_0,(J_0,b),E_0)$, and a point 
$x\in \Sing(J_0,b)$, the rational number $\frac{\nu_x(J_0)}{b}$ can be expressed by the closed sets defined by $\mathcal B$.
Here $\nu_x(J_0)$ denotes the order of $J_0$ at the regular local ring 
$\calo_{W_0,x}$.
\end{theorem}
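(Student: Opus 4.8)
The plan is to recover the invariant $\nu_x(J_0)/b$ purely from the combinatorics of the closed sets $\Sing(J_i,b)$ attached to local sequences of $\B$, using the point-blow-up test and its effect on the order. First I would recall that for the transform of $(J_0,b)$ under a blow-up, the order of the transformed ideal $J_1$ at a point $x_1$ lying over the center $Y$ is governed by $\nu_{x_1}(J_1)\le \nu_{y}(J_0)-b$ when $Y$ passes through the generic point $y$ of the center with $\nu_y(J_0)\ge b$; more to the point, if we blow up a \emph{single closed point} $x\in\Sing(J_0,b)$, then along the exceptional fibre the transform $(J_1,b)$ again has $x_1\in\Sing(J_1,b)$ precisely when $\nu_x(J_0)\ge 2b$, and iterating, the point-blow-up tower stays inside $\Sing(\cdot,b)$ for exactly $\lfloor \nu_x(J_0)/b\rfloor$ steps in the ``most degenerate'' direction. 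This already detects the integer part of $\nu_x(J_0)/b$ from the closed sets alone.

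Next I would handle the fractional part. The standard trick is to use pull-backs by smooth morphisms (allowed in a local sequence, by Definition \ref{lsbo}) to pass to $W_0\times\A^1$, or rather to adjoin a variable, and to multiply — but since multiplying $J_0$ is not a local-sequence operation, the cleaner route is: after the $\lfloor\nu_x(J_0)/b\rfloor$ point blow-ups, one is left with a basic object whose order at the relevant point is the remainder $\nu_x(J_0)-b\lfloor\nu_x(J_0)/b\rfloor$, a value in $[0,b)$; one then reads off which further combinations of blow-ups at points keep the successive singular loci nonempty. Concretely, I would show that the rational number $\nu_x(J_0)/b$ equals the supremum over all local sequences of $\B$ (blow-ups at closed points only, interspersed with pull-backs) of the quantity $(\text{number of blow-ups before the singular locus becomes empty at the strict-transform point})$, suitably normalized — this supremum, being defined entirely in terms of the family of closed sets $\{\Sing(J_i,b)\}$, is by construction an invariant of the weak-equivalence class, hence computable from the closed sets alone. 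The example $\B'=(W_0,(J_0^2,2b),E_0)$ mentioned just before the theorem is the sanity check: it has the same singular loci and the same value $\nu_x(J_0^2)/(2b)=\nu_x(J_0)/b$.

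The main obstacle I expect is making the ``supremum over point-blow-up towers'' genuinely pin down the rational number and not merely its integer part: a single chain of point blow-ups only sees integers, so one must either (i) combine the information from the ambient-space behaviour — e.g. blowing up along subschemes of various dimensions and comparing how the codimension of $\Sing$ jumps — or (ii) invoke the fact (which I would cite from the inductive/coefficient-ideal machinery referenced via \cite{Villa89,Villa92,EncVil97:Tirol}) that the function $x\mapsto \nu_x(J_0)/b$ is, up to the normalization by $b$, exactly the Hironaka order function $\ord_0$, and that the coefficient-ideal construction expresses this order via a finite list of auxiliary basic objects whose singular loci are determined by those of $\B$. In the expository spirit of the paper, the honest proof is likely the latter: one exhibits an explicit description of $\nu_x(J_0)/b$ in terms of lengths or of $\Sing$ of derived basic objects (the successive ``coefficient ideals'' $\Delta^j(J_0)$), each of which is weakly equivalent to something built functorially from $\B$, so that the whole package is invariant under weak equivalence.

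Thus the key steps, in order, are: (1) recall the transformation law for $(J,b)$ under a blow-up and specialize to point blow-ups; (2) show the integer $\lfloor\nu_x(J_0)/b\rfloor$ is the length of the longest point-blow-up tower staying in the singular locus, hence is read off the closed sets; (3) recover the fractional part either by the coefficient-ideal / derived-basic-object description or by combining point blow-ups with higher-dimensional centers and tracking codimensions; (4) conclude that $\nu_x(J_0)/b$, being expressible through the family $\{\Sing(J_i,b)\}$ over all local sequences, depends only on the weak-equivalence class of $\B$, and verify consistency on the example $(J_0^2,2b)$.
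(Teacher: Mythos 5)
Your first step — recovering $\lfloor \nu_x(J_0)/b\rfloor$ as the length of the longest chain of point blow-ups at $x$ that stays inside the singular locus — is sound and matches the motivating $\mathbb{A}^1$ example the paper gives just before the theorem. But the fractional part is where the real work is, and neither of the two routes you propose supplies it. Route (ii), appealing to coefficient ideals $\De(J_0)$, is not available here: the coefficient ideal is built from the ideal $J_0$ itself, not from the closed sets $\{\Sing(J_i,b)\}$, so asserting that its singular loci are determined by those of $\B$ is at least as hard as the theorem you are trying to prove, and is not what the paper does. Route (i) is the right instinct (``use higher-dimensional auxiliary constructions''), but as stated it is too vague to pin down a rational number; blowing up along centers of various dimensions and comparing codimension jumps of $\Sing$ does not by itself produce the denominator.

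The device the paper actually uses, and which is the whole point of ``Hironaka's trick,'' is the following amplification. Pass to $W_1=W_0\times\mathbb{A}^1$ by the (type B) projection, and set $x_1=(x_0,0)$ on the line $L_1=\{x_0\}\times\mathbb{A}^1$. Perform $N-1$ successive point blow-ups at $x_i=L_i\cap H_i$, so that locally at $x_N$ one has $J_N=I(H_N)^{(N-1)(b'-b)}\bar J_N$ with $b'=\nu_{x_0}(J_0)$; the factor $(N-1)$ is the amplification you are missing. Whether $H_N\subset F_N=\Sing(J_N,b)$ is a statement purely about the closed sets (it is equivalent to $\dim(F_N\cap H_N)=d$), and it holds iff $(N-1)(b'-b)\ge b$. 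Then blow up along $H_N$ itself — a hypersurface, so the map is an isomorphism, yet it is a legal transformation of the basic object and each time strips a factor $I(H)^b$. The number $S$ of such hypersurface blow-ups one can perform before the singular locus empties is exactly $\lfloor (N-1)(b'/b-1)\rfloor$, again readable from the closed sets, and $b'/b-1=\lim_{N\to\infty}\frac{1}{N-1}\lfloor(N-1)(b'/b-1)\rfloor$ recovers the full rational. Your proposal does not identify either the multiplication-by-$(N-1)$ step or the hypersurface-blow-up counting, which are precisely what turn an integer-valued observable into a rational one, so it stops short of a proof.
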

 As $\nu_x(J_0)$ is an upper semi-continuous function on $x$, the function 
\begin{equation}\label{ord}
\ord_\mathcal B: \Sing(J_0,b)\longrightarrow \mathbb Q ; \ \ \ord_\mathcal B(x)=\frac{\nu_x(J_0)}{b}
\end{equation}
is also upper semi-continuous. 

The proof will be addressed in \ref{trdh} (see also \ref{slbre}).  The discussion in the following example already gives a nice motivation.

Take $W_0={\mathbb A}_k^1=\Spec(k[X])$ and set 
$\B=(W_0,(J_0,b),E_0=\emptyset)$, $J_0=\langle X^{a}\rangle$,  for some $a\in\mathbb{Z}_{>0}$ such that $a\geq b$ (otherwise $\Sing(J_0,b)=\emptyset$).
Now $\Sing(J_0,b)=\mathbb O$ is the origin and $\ord_\mathcal B(\mathbb O)=\frac{a}{b}$.

If we blow up $\mathbb O$ what we get is the identity map, but if we follow the law of transformation of basic objects we obtain $(J_1,b)$ where $J_1=\id{X^{a-b}}$. If $a-b\geq b$ then $\Sing(J_1,b)=\mathbb O$ and if we blow up this point we obtain 
$(J_2,b)$ where $J_2=\id{X^{a-2b}}$. We can blow up r-times at 
$\mathbb O$ if and only if $ a-(r-1)b \geq b$. In particular the biggest integer $r$ for which one can blow up $r$-times, is the biggest positive 
integer $r$ so that $$ a-(r-1)b \geq b.$$ Note that such integer, say $r_0$, is the integral part of the fraction $\frac{a}{b}$, say $r_0=\lfloor \frac{a}{b}\rfloor.$

So $r_0$ can be expressed as the largest integer $r$ for which there is a sequence of local transformations of $\mathcal B$ consisting of $r$ successive blow ups at $\mathbb O$. There is a local sequence of length $r_0$:
\begin{equation}\label{}
\begin{array}{cccccccc}
 (J_0,b) & & (J_1,b) & &  & &  &(J_{r_0},b)\\
({\mathbb A}^1, E_0) & \longleftarrow & ({\mathbb A}^1, E_1) & \longleftarrow & &\cdots  &\longleftarrow &
({\mathbb A}^1, E_{r_0})
\end{array}
\end{equation}
obtained by blowing up at the origin at each step, and $\Sing(J_{r_0},b)=\emptyset$. So there is no local sequence of length $r_0+1$ obtained by blowing up at the origin.

In other words $r_0=\lfloor\frac{a}{b}\rfloor$ is information encoded by the closed set defined by this particular sequence of transformations of  $\mathcal B$: 
$$\Sing(J_0,b)=\mathbb{O},\ \Sing(J_1,b)=\mathbb{O},\ \dots, \ \Sing(J_{{r_0}-1},b)=\mathbb{O}, \ \Sing(J_{r_0},b)=\emptyset.$$
Of course the integral part of the fraction is only an approximation, but there are many other local sequences of transformations of $\mathcal B$ as local sequences also allow us to take pull-backs by smooth morphisms. What Hironaka shows, and it is well illustrated in the proof in \ref{trdh}, is that using this larger class of local sequences, one can find out exactly the value $\frac{a}{b}$. 

\begin{corollary}\label{rfj}
If $\B=(W_0,(J_0,b),E_0)$ and $\B'=(W_0,(K_0,d),E_0)$ are weakly equivalent, then $$ \frac{\nu_x(J_0)}{b}=\frac{\nu_x(K_0)}{d}$$
at any $x\in \Sing(J_0,b)=\Sing(K_0,d)$.
\end{corollary}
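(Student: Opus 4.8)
The plan is to exhibit a single recipe --- valid for every basic object --- that reads the rational number $\nu_x(J_0)/b$ off the family of closed sets $\Sing(J_i,b)$ produced by the local sequences of $\mathcal B$ in the sense of Definition~\ref{lsbo}. Since $\nu_\bullet(J_0)$ is upper semicontinuous, it is determined on $\Sing(J_0,b)$ by its values at closed points, and restricting to a closed point of $W_0$ changes nothing; so I fix a closed point $x\in\Sing(J_0,b)$ and set $\nu=\nu_x(J_0)$, so $\nu\geq b$. What must be shown is that $\nu/b$ is a function of the closed-set data alone; this already contains Corollary~\ref{rfj} (weakly equivalent basic objects have equal order), the full ``expressibility'' being the same statement with the recipe required to be uniform.

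First I would pin down the integral part $\lfloor\nu/b\rfloor$. Blowing up $x$ produces an exceptional divisor $H_1$ with $\nu_{H_1}(J_0\calo_{W_1})=\nu$, hence $\nu_{H_1}(J_1)=\nu-b$; iterating --- at each stage blowing up a general closed point of the last exceptional divisor over $x$, where the order of the transform drops by exactly $b$ --- gives a local sequence over $x$ that can be continued over $x$ for exactly $\lfloor\nu/b\rfloor$ blow-ups, and since ``$\Sing(J_i,b)$ meets the fibre over $x$'' is a closed-set condition, $\lfloor\nu/b\rfloor$ is recovered. When $\dim W_0=1$ this is completely transparent: the ambient scheme never changes, and $\lfloor a/b\rfloor$ is simply the length of the longest tower of blow-ups at the origin, as in the one-variable example discussed after the statement.

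To recover the fractional part I would pass to the larger class of local sequences, allowing pull-backs by smooth morphisms. Pulling $\mathcal B$ back along a projection $W_0\times\A^{N}\to W_0$ leaves all orders unchanged and sends each $\Sing(J_i,b)$ to $\Sing(J_i,b)\times\A^{N}$, so it is admissible; the $N$ new directions make room for towers of blow-ups whose centres involve the new coordinates, and such towers subdivide $\nu/b$ more finely than towers of point blow-ups in $W_0$ alone --- the latter see only the quotient $\lfloor\nu/b\rfloor$ and are blind to the residue $\nu\bmod b$, indeed for $b=1$ they need not even terminate. Recording, from the closed sets, how long such towers can be kept nonempty over $x$ then extracts $\nu/b$ exactly. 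The main obstacle is precisely this last step --- obtaining the exact rational value rather than merely its integer part --- and it is where Hironaka's enlargement of the class of local sequences is indispensable; it is carried out in detail in \ref{trdh}, with the one-variable computation of $a/b$ as the prototype.
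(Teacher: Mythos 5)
Your reduction of Corollary~\ref{rfj} to Theorem~\ref{lfdhi} is exactly the paper's proof, and correct: once $\nu_x(J_0)/b$ is expressed purely in terms of the closed sets that $\B$ assigns, weakly equivalent basic objects necessarily give the same value. But the rest of the proposal is an attempted sketch of Theorem~\ref{lfdhi} itself, and there the decisive step is absent. The computation of $\lfloor\nu/b\rfloor$ via general-point blow-ups is a detour: ``general'' is an order-theoretic, not a closed-set, condition (to use it one must still argue that $\B$ and $\B'$ share a dense open locus of suitable points), and in any case the integer part does not enter the recipe that actually produces $\nu/b$. For the fractional part you correctly identify that smooth pull-backs are indispensable and that the one-variable example is the prototype, but you then defer to \ref{trdh} rather than give the argument, and the picture you offer---towers over $W_0\times\A^{N}$ that ``subdivide $\nu/b$ more finely''---is not what happens there.

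The actual mechanism is accumulation followed by division. After one smooth pull-back to $W_1=W_0\times\A^1_k$, Hironaka blows up the closed point $x_1=(x_0,0)$ on the line $L_1=\{x_0\}\times\A^1_k$, and then at each step the point $x_i=L_i\cap H_i$, where $L_i$ is the strict transform of $L_1$ and $H_i$ is the latest exceptional divisor. These are \emph{special} points, not general ones: the order of the controlled transform does not drop at them, and the exceptional exponent accumulates, giving $J_N=I(H_N)^{(N-1)(b'-b)}\bar J_N$ near $x_N$, with $b'=\nu_{x_0}(J_0)$. One then blows up the hypersurface $H_N$ repeatedly (isomorphisms on $W_N$, each lowering that exponent by $b$); the $j$-th such transformation is admissible precisely when $H_{N+j}\subset F_{N+j}$, a closed-set condition, and the maximal admissible $S$ is $\lfloor(N-1)(b'/b-1)\rfloor$. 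Finally,
$$\frac{b'}{b}-1=\lim_{N\to\infty}\frac{1}{N-1}\left\lfloor(N-1)\Big(\frac{b'}{b}-1\Big)\right\rfloor,$$
so the exact rational order is read off the assignment of closed sets. Without this accumulation step and the limit in $N$, the proposal has no mechanism for extracting the fractional part, which is the entire content of Theorem~\ref{lfdhi}.
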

\begin{proof}This occurs because the weakly equivalent basic objects $\B$ and $\mathcal B'$ define the same closed sets.
\end{proof}
\begin{remark}\label{slbre}

%\begin{definition}\label{lsbo}
 The argument that Hironaka uses in his proof of Theorem \ref{lfdhi}, to be developed in  \ref{trdh},
 can be expressed roughly as follows: Fix a basic object $\B_0=(W_0,(J_0,b),E_0)$, and a point $x_0\in \Sing(J_0,b)$. Consider a local sequence, say 
\begin{equation}%\label{rlcww}
\begin{array}{cccccccc}
 (J_0,b) & & (J_1,b) & & (J_2,b) & &  &(J_r,b)\\
(W_{0}, E_0) & \longleftarrow & (W_{1}, E_1) & \longleftarrow &
(W_{2}, E_2) &\cdots  &\longleftarrow &
(W_{r}, E_r)\end{array}
\end{equation}
together with points $x_i\in \Sing(J_i,b) (\subset W_i)$, $0\leq i \leq r$, so that each
$x_i$ maps to $x_{i-1}$ (in particular all $x_i$ map to $x_0$).
He proves that the rational number 
$\ord_{\mathcal B_0}(x_0)=\frac{\nu_{x_0}(J_0)}{b}$ can be specified
once you know the local codimension of $\Sing(J_i,b) (\subset W_i)$
at $x_i$, {\em for all} local sequences, and {\em for all}  choices of $x_i$ as above. The argument makes essential use of maps of type B) in the definition of a local sequence in
\ref{lsbo} (namely, of pull-backs of basic objects by smooth morphisms).

%The set $E_0$ in Theorem \ref{lfdhi} plays no role in the proof of this Theorem. 

%This is useful since we may, in particular, replace the set $E_0$ by  $E'_0$ with $E_0\subset E'_0$. 

%If $E_0 \subset E'_0$, then any local sequence of transformations of 
%$(W_0,(J_0,b),E'_0)$ is also a local sequence for $(W_0,(J_0,b),E_0)$, but not the other way around. 

%Note that if $(W_0,(J_0,b),E_0)$, and $(W_0,(K_0,d),E_0)$,
%are weakly equivalent basic objects, then so are $(W_0,(J_0,b),E'_0)$, and $(W_0,(K_0,d),E'_0)$.

\end{remark}

\begin{parrafo}{\bf A common frame in the previous discussions.}

Once we fix $\B=(W_0,(J_0,b),E_0)$, the  local sequences of transformations of $\B$ define closed sets. The following definitions will apply naturally to this situation.

\begin{definition}

Fix $(W_0,E_0)$ as in \ref{p1} and define a {\em sequence over} $(W_0,E_0)$ as:
\begin{equation}\label{nrs}
\xymatrix@C=2pc{
(W_0,E_0) & (W_1,E_1)\ar[l] & \ \ \dots\ \ \ar[l] &  (W_{r},E_{r})\ar[l]
}
\end{equation} 
for some integer $r$, where each 
$(W_i,E_i)\longleftarrow (W_{i+1},E_{i+1}) $ is either:
\begin{itemize}
\item[A)] A blow-up along a smooth center $Y_i$ having normal crossings with $E_i$, in which case $E_{i+1}$ is as in \ref{p1}.

\item[B)] A pull-back by a smooth morphism $W_i \longleftarrow W_{i+1}$ in which case $E_{i+1}$ is defined as in \ref{pt5}.
\end{itemize}
\end{definition}

Many  sequences can be constructed over $(W_0,E_0)$. We now introduce a notion of an {\em assignment of closed set} over $(W_0,E_0)$. The idea is to assign closed sets $F_i$ in $W_i$, $0\leq i \leq r$, to a sequence (\ref{nrs}); however such assignment of closed sets will be defined only to  {\em some} of these sequences.

For example, if we take $\B=(W_0,(J_0,b),E_0)$, then we will assign 
$F_0=\Sing(J_0,b)$ to $W_0$. If $(W_0,E_0)\longleftarrow (W_1,E_1)$ is obtained by a blow-up along $Y$, then a closed set will
be assigned to $W_1$, namely $F_1=\Sing(J_1,b)$, {\em only} if $Y\subset F_0$ (see \ref{ejdebo}).

Therefore the definition of an assignment has to indicate which are the sequences for which closed sets will be assigned. We do this by induction on the integer $r$.
%
%In all the examples discussed here, we shall fix  $(W_0,E_0)$ and we shall indicate which is the class of  sequences (\ref{nrs}) for which the assignment will be defined.

\begin{definition}\label{afcs}
Define an {\em assignment of closed sets over
$(W_0,E_0)$}, say $(\mathcal F, (W_0,E_0))$, to be given by:

(1) A (unique) closed set $F_0\subset W_0$.

(2) Fix a sequence (\ref{nrs}), and assume that, for the sequence defined by the first $r-1$ steps, an assignment of closed sets, say 
\begin{equation}\label{}
\begin{array}{cccccccc}
 F_0 & & F_1 & &  & & F_{r-1} & \\
(W_0,E_0)&\longleftarrow & (W_1,E_1)& \longleftarrow &\dots &\longleftarrow &(W_{r-1},E_{r-1}) & \\
\end{array}
\end{equation}
is defined. Here $F_0$ is as above.
We now give conditions  in order to decide when closed sets are assigned to (\ref{nrs}). In such a case we will denote them by 
\begin{equation}
\begin{array}{ccccccccc}
 F_0 & & F_1 & &  & & F_{r-1} & &F_{r} \\
(W_0,E_0)&\longleftarrow & (W_1,E_1)& \longleftarrow &\dots &\longleftarrow &(W_{r-1},E_{r-1}) & \longleftarrow &(W_{r},E_{r})
\end{array}
\end{equation}
(same $F_0, \dots ,F_{r-1}$ as above):

2A) If $(W_{r-1},E_{r-1})\overset{\pi_{r-1}}{\longleftarrow} (W_{r},E_{r}) $ is a blow-up along a smooth center, we require that the center $Y_{r-1}$ be included in 
$ F_{r-1}$. If so, a unique closed $F_{r}$ is assigned in $W_r$,  with the property that
$$F_{r}\setminus H_{r}=F_{r-1}\setminus Y_{r-1},$$
where $H_r=\pi_{r-1}^{-1}(Y_{r-1})$ denotes the exceptional hypersurface of the blow-up.

Sometimes we will impose some extra condition on the choice of the smooth centers; but these conditions will arise quite naturally.

2B) If $(W_{r-1},E_{r-1})\longleftarrow (W_{r},E_{r}) $ is a pull-back by a smooth morphism, say $\sigma:W_{r} \longrightarrow W_{r-1} $, then a closed set 
$F_r$ is assigned to $W_r$, and moreover $$F_{r}=\sigma^{-1}(F_{r-1}).$$

\end{definition}
%XXXXXXXXXXXXXXXXXXXXXXXX
%Note here that, as was previously indicated, an assignment of sets on $(W_0,E_0)$ defines closed sets for a sequence (\ref{nrs}) only when this sequence fulfills some requirements. The definition specifies which are the sequences for which there is an assignment. Note that the condition is impose only for transformations of type 2A).
%XXXXXXXXXXXXXXXXXXXXXXX
%\

%YYYYYYYYYYYYYYYYYYYYYYY
We would like to emphasize that, as it was previously indicated, an assignment of closed sets over $(W_0,E_0)$ defines closed sets for a sequence (\ref{nrs}) only when this sequence fulfills the specific extra condition indicated as above. In each case we dictate the specification telling which sequences are the ones for which there is an assignment. Note that the specific and extra condition is imposed only for transformations of type 2A).

%YYYYYYYYYYYYYYYYYYYYYYYYYY

\begin{example}\label{ejdebo}
A basic object  $\B=(W_0,(J_0,b), E_0)$ defines an assignment of closed sets on $(W_0,E_0)$, say
$$ (\mathcal F_{\mathcal B}, (W_0,E_0)),$$
%With this notation, an assignment of closed sets is defined by $\B=(W_0,(J_0,b), E_0)$, by setting:
in the following way:

1) $F_0=\Sing(J_0,b)$.

2) For any local sequence  (\ref{rlcww}), the assignment of closed sets is
\begin{equation}\label{ejyy}
\begin{array}{cccccccc}
 \Sing(J_0,b) & &\Sing (J_1,b) & & &  &\Sing(J_r,b)\\
(W_{0}, E_0) & \longleftarrow & (W_{1}, E_1) & \longleftarrow &
\cdots  &\longleftarrow &
(W_{r}, E_r)
\end{array}
\end{equation}
\end{example}
\begin{remark}\label{ej123}

1) In Example \ref{ejdebo} (of the assignment of closed sets defined by $\B=(W_0,(J_0,b), E_0)$), the only sequences over $(W_0,E_0)$ for which closed sets are assigned are the local sequences of the basic objects in Definition \ref{lsbo}. There are many sequences (\ref{nrs}) over $(W_0,E_0)$, but closed sets are assigned only to those in Definition \ref{lsbo}.

Recall that a local sequence of the basic object
$\B=(W_0,(J_0,b), E_0)$, say (\ref{ejyy}), 
was defined with the only condition that for any index $i$ for which 
 \begin{equation}
\begin{array}{ccc}
 \Sing(J_i,b)  & & \Sing(J_{i+1},b) \\
(W_i , E_i) & \longleftarrow & (W_{i+1} ,E_{i+1})
\end{array}
\end{equation}
is given by a blow-up, its center $Y_i$ should be included in $\Sing(J_i,b) $ (and have normal crossings with 
$E_i$). It is clear that properties 2A) and 2B) in \ref{afcs} hold for $ (\mathcal F_{\mathcal B}, (W_0,E_0)).$

2) {\bf  Important}: Two basic objects $\mathcal B$ and $\B'=(W_0,(K_0,d), E_0)$ are weakly equivalent if and only if they define the same assignment of closed sets.
%XXXXXXXXXXXXXXXXXXXXXXXXXXXXX
%SE QUITA
%3) Take an assignment of closed sets over $(W_0,E_0)$, say 
%$(\mathcal F, (W_0,E_0))$, and enlarge $E_0$ to 
%$(E_0\subset E'_0)$, where $E'_0$ a set of hypersurfaces in $W_0$ with normal crossings. Then there is also a natural definition of an assignment over $(W_0, E'_0)$, say
%$(\mathcal F, (W_0,E'_0))$: Any sequence over $(W_0,E'_0)$, say 
%\begin{equation}\label{s1}
%\xymatrix@C=2pc{
%(W_0,E'_0) & (W_1,E'_1)\ar[l] & \ \ \dots\ \ \ar[l] &  (W_{r},E'_{r})\ar[l]
%}
%\end{equation} 
%defines  a sequence over $ (W_0,E_0)$, namely 
%\begin{equation}\label{s2}
%\xymatrix@C=2pc{
%(W_0,E_0) & (W_1,E_1)\ar[l] & \ \ \dots\ \ \ar[l] &  (W_{r},E_{r})\ar[l]
%}
%\end{equation} 

%If $(\mathcal{F},(W,E_0))$ assigns closed sets to (\ref{s2}), say 
%\begin{equation}
%\begin{array}{ccccccc}
% F_0 & & F_1 & &  &  &F_{r} \\
%(W_0,E_0)&\overset{\pi_0}{\longleftarrow} & (W_1,E_1)& \overset{\pi_1}{\longleftarrow} &\dots &\overset{\pi_{r-1}}{\longleftarrow} &(W_{r},E_{r})
%\end{array}
%\end{equation}
%then we set 
%\begin{equation}
%\begin{array}{ccccccc}
% F_0 & & F_1 & &  &  &F_{r} \\
%(W_0,E'_0)&\overset{\pi_0}{\longleftarrow} & (W_1,E'_1)& \overset{\pi_1}{\longleftarrow} &\dots &\overset{\pi_{r-1}}{\longleftarrow} &(W_{r},E'_{r})
%\end{array}
%\end{equation}
%(same closed sets $F_i$ as above). One can check now that this defines an assignment of closed sets over $(W_0, E'_0)$, which we denote by $(\mathcal{F},(W,E'_0))$. 

%This situation appears in Remark \ref{slbom} for assignments of closed sets defined by basic objects.
\end{remark}

The main example of assignment of closed sets is the one defined by a basic object as above. The following parallels the definition in (\ref{riic}).

\begin{definition}{\rm ({\bf Resolution of an assignment of closed sets})}. \label{racs} Let $(\mathcal{F},(W,E))$ be an assignment of closed sets.  A sequence
\begin{equation}
\begin{array}{ccccccc}
 F_0 & & F_1 & &  &  &F_{r} \\
(W_0,E_0)&\overset{\pi_0}{\longleftarrow} & (W_1,E_1)& \overset{\pi_1}{\longleftarrow} &\dots &\overset{\pi_{r-1}}{\longleftarrow} &(W_{r},E_{r})
\end{array}
\end{equation}
(with closed sets $F_i$ assigned to $W_i$, $0\leq i \leq r$) is a \emph{resolution} of  $(\mathcal{F},(W,E))$, if $F_r=\emptyset$ and 
each $\pi_i$ is a blow-up ( see 2A) in Definition \ref{afcs}).
\end{definition}

\begin{example}\label{agte} Fix a reduced subscheme $X_0\subset W_0$ and $(W_0,E_0)$, then an assignment of closed sets is defined in \ref{fed14} by taking:

1) $F_0=X_0$ in $W_0$.

2) for any local sequence, set $F_i=X_i$, namely
\begin{equation}\label{1271}
\begin{array}{cccccccc}
 X_0 & & X_1 & &  &  &X_r\\
(W_{0}, E_0) & \longleftarrow & (W_{1}, E_1) & \longleftarrow &\cdots  &\longleftarrow &
(W_{r}, E_r)
\end{array}
\end{equation} 

So closed sets are assigned in this case, {\em only} for sequences over $(W_0,E_0)$ that arise from a local sequence as those defined in \ref{fed14}.
\end{example}

\begin{parrafo}\label{aoech} { \bf Assignments of closed sets and functions; a useful Lemma.}

The reader might want to look first into Example \ref{34} to motivate the following definition.

\begin{definition}\label{def229} Define an \emph{assignment of closed sets and functions} over $(W_0,E_0)$, say $(\mathcal EF, (W_0,E_0),T)$, as an assignment of closed sets over $(W_0,E_0)$, say $(\mathcal F, (W_0,E_0))$, together with upper semi-continuous functions on a totally ordered set $T$. Namely, 

\begin{enumerate}
\item A (unique) closed set $F_0$  in $ W_0$, and a unique upper semi-continuous function $g_0: F_0 \rightarrow T$.

\item  Given a sequence over $(W_0,E_0)$, say 
\begin{equation}\label{n12rs}
(W_0,E_0)\longleftarrow (W_1,E_1) \longleftarrow\ \ \dots\ \ \longleftarrow (W_r,E_r)
\end{equation} 
and assuming that closed set are assign to it by  $(\mathcal F, (W_0,E_0))$, say 
\begin{equation}\label{ruij46}
\begin{array}{cccccccc}
 F_0 & & F_1 & &  & & F_r & \\
(W_0,E_0)&\longleftarrow & (W_1,E_1)& \longleftarrow &\dots &\longleftarrow &(W_r,E_r) & \\
\end{array}
\end{equation}
then (unique) upper semi-continuous functions, say $$g_i:F_i \longrightarrow T \ \ 0\leq i \leq r,$$
are defined, and they have the following properties:
\begin{enumerate}
\item[A)] If $W_i \longleftarrow W_{i+1}$ is monodal trasformation with center $Y_i$, then 
 $$g_{i+1}(x)=g_i(x)$$
for any  $x\in F_{i+1}\setminus H_{i+1}=F_i\setminus Y_i$.

\item[B)]  If $W_i  \overset{\sigma}{\longleftarrow}  W_{i+1}$ is a smooth morphism then
$$g_{i+1}(x)=g_i(\sigma(x))$$ for any $x\in F_{i+1}=\sigma^{-1}(F_i)$.
\end{enumerate}
\end{enumerate}
\end{definition}
\end{parrafo}
%
%XXXXXXXXXXXXXXXXXXXXXXXXXXX
%\begin{parrafo} \label{Defusc}
%Let $ X $ be a noetherian topological space  and 
%$ (I,\leq) $ a totally ordered set. If a function
%$ g:X\longrightarrow(I,\leq) $
%is upper-semi-continuous we denote by
%\begin{itemize}
%	\item[(a)]  $\max{g}$ the biggest value 
%	achieved, and

%	\item[(b)]  $\Max{g}=\{\xi\in X\mid g(\xi)=\max{g}\} $, which is 
%	closed in $ X $.
%\end{itemize}
%\end{parrafo}
%XXXXXXXXXXXXXXXXXXXX

\begin{example}\label{34} ({\it Hironaka's assignment}). Take $T=\mathbb Q$. For any basic object $\B_0=(W_0,(J_0,b),E_0)$, an assignment of closed sets and functions, say $(\mathcal HF, (W_0,E_0),T=\mathbb Q)$,  is defined by setting:
\begin{enumerate}
\item $F_0=\Sing(J_0,b)$ and $g_0=\ord_{\B_0}: \Sing(J_0,b) \longrightarrow T$ (see (\ref{ord})).

\item Assign, for any sequence (\ref{ejyy}), $F_i= \Sing(J_i,b)$ and  $g_i=\ord_{\B_i}: \Sing(J_i,b) \longrightarrow T$.
\end{enumerate}
\end{example}

Here the role of $(\mathcal F, (W_0,E_0))$ in the previous Definition \ref{def229} is played by $(\mathcal F_{\mathcal B_0}, (W_0,E_0))$ in \ref{ejdebo}. Corollary \ref{rfj} ensures that if 
$\B_0=(W_0,(J_0,b),E_0)$ and $\B_0'=(W_0,(K_0,d),E_0)$ are weakly equivalent, then they define the same assignment of closed sets $(\mathcal F_{\mathcal{B}_0}, (W_0,E_0))$, and also the same assignment of closed sets and functions.

\begin{example}\label{35} 

 Fix $T= \mathbb N^{\mathbb N}$ with the lexicographic order. Given a reduced subscheme $X_0\subset W_0$ and $(W_0,E_0)$, consider the assignment of closed sets in \ref{agte} together with the Hilbert Samuel functions:
 \begin{enumerate}
\item Assign to $W_0$ the closed set $F_0=X_0$, and the function $HS_{X_0}:X_0 \longrightarrow T$, the Hilbert Samuel function of $X_0$.
 
\item For any sequence (\ref{1271}), set $F_i=X_i$ and $HS_{X_i}:X_i \longrightarrow T$, the Hilbert Samuel function of $X_i$, $0\leq i \leq r$.
\end{enumerate}
\end{example}

Recall Bennett's inequality in (\ref{jurdw}) as a first motivation for the next definition:

\begin{definition}\label{ddusc}
An assignment of closed sets and functions, say $(\mathcal EF, (W_0,E_0),T)$, will be said to be {\em non-increasing} if the following property holds:

 Whenever sets and functions are assigned to a sequence (\ref{n12rs}), say 
\begin{equation}\label{ruij461}
\begin{array}{cccccccc}
 F_0 & & F_1 & &  & & F_r \\
(W_0,E_0)&\longleftarrow & (W_1,E_1)& \longleftarrow &\dots &\longleftarrow &(W_r,E_r)
\end{array}
\end{equation}
and functions
$$g_0:F_0 \longrightarrow T, \quad g_1:F_1 \longrightarrow T, \dots\ \  g_r:F_r \longrightarrow T,$$
and if, in addition, for each index $i$ for which $W_i \overset{\pi_{i}}{\longleftarrow}  W_{i+1}$ is a monodal trasformation the center is included in $\Max g_i$ (i.e., $Y_i \subset \Max g_i$), then
\begin{equation}\label{eddes}
g_i(\pi_i(x)) \geq g_{i+1}(x)
\end{equation}
for any $x\in F_{i+1}$.
\end{definition}

\begin{remark}\label{kkr}
Note that in the setting of the previous definition:
$$ \max g_0\geq  \max g_1 \dots \geq  \max g_r. $$

Check that the assignment of closed sets and functions 
in Example \ref{35} is non-increasing 
(see (\ref{jurdw})), whereas the assignment in Example \ref{34}  does not have this property.
\end{remark}

Non-increasing assignments will be very useful for the further development, as we indicate below. The reader might want to look  into the Example \ref{ejHSa}.

\begin{lemma} {\rm ({\bf Handy Lemma})}\label{Handy}
 Suppose that the assignment of closed sets and functions $(\mathcal EF, (W_0,E_0),T)$, is {\em non-increasing}.
 Let $\max g_0$ be the highest value achieved by $g_0: F_0 \longrightarrow T$.

Then a new assignment of closed set, say $(\mathcal F(\max g_0), (W_0,E_0))$, is defined by setting 
\begin{enumerate}
\item $F_0(\max g_0)=\Max g_0$ in $ W_0$ (the closed subset of points of $F_0$ where the function $g_0$ takes the maximum value $\max g_0$).

\item  Firstly, assume that a sequence (\ref{n12rs}) is such that sets and functions are assigned by  $(\mathcal EF, (W_0,E_0),T)$, say
\begin{equation}
\begin{array}{cccccccc}
 F_0 & & F_1 & &  & & F_r & \\
(W_0,E_0)&\longleftarrow & (W_1,E_1)& \longleftarrow &\dots &\longleftarrow &(W_r,E_r) & \\
\end{array}
\end{equation}
and  $$g_i:F_i \longrightarrow T, \ \ 0\leq i \leq r.$$
Secondly assume that $Y_i\subset F_i(\max g_0)$
for any index $i$ for which $(W_i,E_i)\longleftarrow (W_{i+1},E_{i+1}) $ is a blow-up with center $Y_i$, where $F_i(\max g_0) $ denotes the subset of $F_i$ at which $g_{i}$ takes the value $\max g_0$.

When these conditions hold, attach to this sequence the closed sets
\begin{equation}
\begin{array}{cccccccc}
 F_0(\max g_0) & & F_1(\max g_0) & &  & & F_r(\max g_0) & \\
(W_0,E_0)&\longleftarrow & (W_1,E_1)& \longleftarrow &\dots &\longleftarrow &(W_r,E_r) & \\
\end{array}
\end{equation}
\end{enumerate}
\end{lemma}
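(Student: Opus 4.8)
The plan is to verify that the two data specified in the statement—the closed set $F_0(\max g_0) = \Max g_0$ in $W_0$, together with the recipe attaching $F_i(\max g_0)\subset F_i$ to admissible sequences—do in fact constitute an assignment of closed sets in the sense of Definition \ref{afcs}. Since the set $F_0(\max g_0)$ and the admissibility condition ($Y_i\subset F_i(\max g_0)$ at every blow-up) are already fixed in the statement, the only real content is to check the two compatibility conditions 2A) and 2B) of Definition \ref{afcs}, and along the way to confirm that each $F_i(\max g_0)$ is genuinely \emph{closed} in $W_i$ and is well defined (i.e. independent of any choices, given the sequence). The argument proceeds by induction on the length $r$ of the sequence, exactly mirroring the inductive structure of Definition \ref{afcs}.

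First I would record that each $F_i(\max g_0)$ is closed: by hypothesis each $g_i:F_i\to T$ is upper semi-continuous and $F_i$ is closed in $W_i$, so $\{x\in F_i \mid g_i(x)\geq \max g_0\}$ is closed in $W_i$; but since the assignment is non-increasing, Remark \ref{kkr} gives $\max g_i\leq \max g_0$, hence this set equals $\{x\in F_i\mid g_i(x)=\max g_0\}=F_i(\max g_0)$, which is therefore closed (possibly empty). Next, for condition 2B), suppose $(W_{i},E_i)\overset{\sigma}{\longleftarrow}(W_{i+1},E_{i+1})$ is a pull-back by a smooth morphism. Then $F_{i+1}=\sigma^{-1}(F_i)$ and, by property B) of Definition \ref{def229}, $g_{i+1}(x)=g_i(\sigma(x))$ for all $x\in F_{i+1}$; consequently $g_{i+1}(x)=\max g_0$ if and only if $\sigma(x)\in F_i(\max g_0)$, i.e. $F_{i+1}(\max g_0)=\sigma^{-1}\bigl(F_i(\max g_0)\bigr)$, which is precisely 2B).

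For condition 2A), suppose $(W_i,E_i)\overset{\pi_i}{\longleftarrow}(W_{i+1},E_{i+1})$ is a blow-up along a smooth center $Y_i$, with exceptional hypersurface $H_{i+1}=\pi_i^{-1}(Y_i)$. The admissibility condition built into the statement is $Y_i\subset F_i(\max g_0)$, which in particular forces $Y_i\subset \Max g_i$ since $\max g_i\le \max g_0$; hence the non-increasing property applies and yields $g_i(\pi_i(x))\geq g_{i+1}(x)$ for all $x\in F_{i+1}$. Now take $x\in F_{i+1}\setminus H_{i+1}$; by condition A) of Definition \ref{def229} we have $g_{i+1}(x)=g_i(\pi_i(x))$ (identifying $F_{i+1}\setminus H_{i+1}$ with $F_i\setminus Y_i$), so $g_{i+1}(x)=\max g_0$ iff $\pi_i(x)\in F_i(\max g_0)$; that gives the required equality $F_{i+1}(\max g_0)\setminus H_{i+1}=F_i(\max g_0)\setminus Y_i$. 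The uniqueness clause of 2A) is automatic, since $F_{i+1}(\max g_0)$ is determined as a specified subset of $F_{i+1}$. This completes the induction and shows $(\mathcal F(\max g_0),(W_0,E_0))$ is an assignment of closed sets.

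I do not expect a serious obstacle here: the statement is essentially a bookkeeping lemma, and the one place demanding care is the interplay between the non-increasing hypothesis and the closedness of the locus $\{g_i = \max g_0\}$—namely, recognizing that one must use $\max g_i\le \max g_0$ (not equality) to identify the closed set $\{g_i\geq \max g_0\}$ with $\{g_i=\max g_0\}$, and to justify invoking \eqref{eddes} from the weaker admissibility $Y_i\subset F_i(\max g_0)$. Everything else is a transcription of Definitions \ref{afcs} and \ref{def229} and Definition \ref{ddusc}.
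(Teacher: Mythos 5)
Your proof is correct and follows the natural route; the paper itself does not give a detailed proof of the Handy Lemma, leaving it as a verification that invokes Remark \ref{kkr} (your key observation $\max g_i\le \max g_0$, from which $F_i(\max g_0)$ is either empty or equal to $\Max g_i$). Your unwinding of conditions 2A) and 2B) of Definition \ref{afcs} from Definitions \ref{def229} and \ref{ddusc} is exactly the bookkeeping the paper intends, with the only delicate point—using the inclusion $Y_i\subset F_i(\max g_0)$, together with the inductively established inequality $\max g_i\le \max g_0$, to deduce $Y_i\subset\Max g_i$ and so legitimately invoke (\ref{eddes})—handled as you describe.
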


\end{parrafo}

\begin{parrafo}
One can check from \ref{kkr} that either $F_r(\max g_0)=\emptyset$ or 
$$ \max g_0= \max g_1 \dots =  \max g_r $$
and $F_i(\max g_0)= \Max g_i$ for $ 0\leq i \leq r$.
A first example in which the Handy Lemma applies is on the assignment of closed sets and functions in \ref{35}:

\end{parrafo}

\begin{example}\label{ejHSa} ({\bf  The Hilbert Samuel assignment of closed sets}).

Fix $X_0\subset W_0$, $(W_0,E_0)$, and set $\overline h =\max HS_{X_0}$. We now define an assignment of closed sets over $(W_0,E_0)$
corresponding to the value $\overline h =\max HS_{X_0}$, say
$$ (\mathcal F({\overline h}), (W_0,E_0)).$$

Recall the definition of  a local sequence (\ref{rlcwwW}): 
\begin{equation}\label{rlcbw}
\begin{array}{cccccccc}
 X_0 & & X_1 & & &  &X_r\\
(W_{0}, E_0) & \longleftarrow & (W_{1}, E_1) & \longleftarrow&\cdots  &\longleftarrow &
(W_{r}, E_r)\end{array}
\end{equation}
where each  
 \begin{equation}\label{psoi}
\begin{array}{ccc}
 X_i & & X_{i+1}\\
(W_i , E_i) & \longleftarrow & (W_{i+1} ,E_{i+1})\\ %Y & & 
\end{array}
\end{equation}
is obtained either by a suitable blow-up with center $Y_i \subset X_i$, or by a smooth morphism $\sigma_i:W_{i+1} \longrightarrow W_i$. Recall also the assignment of closed sets and functions in Example \ref{35}, obtained from the functions $HS_{X_i}:X_i\longrightarrow \mathbb{N}^{\mathbb{N}}$.

Remark \ref{rk4} shows that a new assignment of closed sets is defined by setting 
\begin{enumerate}
\item $F_0=X(\overline h)$

\item For any local sequence (\ref{rlcbw}),  with the extra condition that 
$Y_i\subset X_i(\overline h)$ if (\ref{psoi}) is obtained by a blow-up, set
$$F_i=X_i(\overline h) \ \ 0\leq i \leq r.$$
\end{enumerate}
\end{example}

\end{parrafo}

\section{On generalized basic objects}
\begin{parrafo} {\bf General basic objects of dimension $d$}.

We want to identify the basic objects that are weakly equivalent. The notion of assignment of closed sets was appropriate since two basic objects are weakly equivalent if and only if they define the same assignment (see Remark \ref{ej123}).

Moreover, Theorem \ref{lfdhi} of Hironaka says that a basic object defines an assignment of closed sets and functions, with the functions  order introduced in (\ref{ord}).

The objective of this section is to introduce new examples of assignments. They will all be relevant for our further discussion:
\begin{enumerate}
\item In Definition \ref{nonemb} new assignments will be introduced by patching assignments obtained from basic objects.

\item In \ref{paso1} and \ref{ods} we introduce an assignment of closed sets in terms of a basic object together with a closed immersion (Embedded Basic Objects).

\item In Definition \ref{ebgbo} a class of assignments of closed sets is defined by patching assignments as in 2).
\end{enumerate}

\vskip 0.4cm
Fix notation as in Definition \ref{afcs} for an assignment $(\mathcal F, (W_0,E_0))$. Assume now that a short  sequence
\begin{equation}\label{eq312}
(W_0,E_0) \longleftarrow (W_1,E_1)
\end{equation}
(obtained either by a smooth morphism or by a blow-up) is such that there are closed sets assigned to it, say
\begin{equation}\label{r666}
\begin{array}{ccc}
 F_0 & & F_1  \\
(W_0,E_0)&\longleftarrow & (W_1,E_1)
\end{array}
\end{equation}

Consider now all sequences over $(W_0,E_0)$ of the form 
\begin{equation}
(W_0,E_0)\longleftarrow (W_1,E_1) \longleftarrow\ \ \dots\ \ \longleftarrow (W_r,E_r)
\end{equation} 
namely all those sequences which begin with the short sequence (\ref{eq312}), for which 
 closed sets, say 
\begin{equation}\label{ru4666}
\begin{array}{cccccccc}
 F_0 & & F_1 & &  & & F_r & \\
(W_0,E_0)&\longleftarrow & (W_1,E_1)& \longleftarrow &\dots &\longleftarrow &(W_r,E_r) & \\
\end{array}
\end{equation}
are assigned.

One can check that a new assignment of closed sets is defined now on $(W_1, E_1)$, say $(\mathcal F_1, (W_1,E_1))$, by setting $F_1$ in $W_1$, and taking, in general, all sequences (\ref{ru4666}) where we neglect the first step.
\begin{definition}\label{def32} Fix the notation as above, define $(\mathcal F_1, (W_1,E_1))$ as the \emph{transform of $(\mathcal F, (W_0,E_0))$}, and denote it by 
\begin{equation}\label{r5436}
\begin{array}{ccc}
 \mathcal F & & \mathcal F_1 \\
(W_0,E_0)&\longleftarrow & (W_1,E_1) \\
\end{array}
\end{equation}

\end{definition}
\begin{example} Let the notation be as in Example \ref{agte}, where the assignment of closed sets defined by $X_0\subset W_0$, $(W_0,E_0)$ is introduced. Set $r=1$ in (\ref{1271}), say 
\begin{equation} 
\begin{array}{ccc}
 X_0 & & X_1 \\
(W_0,E_0)&\longleftarrow & (W_1,E_1). \\
\end{array}
\end{equation}
The transform $(\mathcal F_1, (W_1, E_1)) $, in (\ref{r5436}), is the assignment defined by $X_1\subset W_1$, $(W_1,E_1)$. 
\end{example}

\begin{example} A similar observation applies to the assignment in Example \ref{ejdebo}, defined by $\B_0=(W_0,(J_0,b),E_0)$. If we set $r=1$ in (\ref{ejyy}), say 
\begin{equation}\label{ejyy2}
\begin{array}{ccc}
 \Sing(J_0,b) & &\Sing (J_1,b) \\
(W_{0}, E_0) & \longleftarrow & (W_{1}, E_1),
\end{array}
\end{equation}
the assignment $(\mathcal F_1, (W_1, E_1)) $ in (\ref{r5436}) is that defined by $\B_1=(W_1,(J_1,b),E_1)$.

\end{example} 
\begin{parrafo} A transformation of an assignment 
$(\mathcal F , (W_0,E_0))$ arises naturally if we take $W_0 \longleftarrow W_1$ to be an open immersion in (\ref{r5436}). In this case, set 
$W_1=U$ open in $W_0$, and set (\ref{r5436}) as
\begin{equation}\label{rcab}
\begin{array}{ccc}
 \mathcal F & & (\mathcal F)_U \\
(W_0,E_0)&\longleftarrow & (U,E_U)
\end{array}
\end{equation}
This is a natural notion of restriction, which we also 
apply when $W_0\leftarrow U$ is \'etale.
\end{parrafo}

A new class of assignments of closed sets will be introduced, called general basic objects. There will be two such notions: embedded and non-embedded, we begin by the latter.

\begin{definition}\label{nonemb} Fix $(W_0,E_0)$, where $W_0$ is smooth of dimension $d$. An assignment of closed sets
$(\mathcal F , (W_0,E_0))$ is said to be a {\em non-embedded general basic object} of dimension $d$ if there is an \'etale cover of $W_0$ by charts, say $\{U_{\lambda}\}$, so that each restriction  
$$((\mathcal F)_{U_\lambda}, (U_\lambda, E_\lambda))$$
is the assignment defined by a basic object $\B_\lambda=(U_\lambda, (J_\lambda, b_\lambda), E_\lambda)$.
\end{definition}

\begin{remark}\label{rk72}
1) If $(\mathcal F_\B , (W_0,E_0))$ is the assignment defined by a basic object $\B=(W_0,(J_0,E_0), E_0))$,
as in (\ref{ejdebo}), it is clearly a general basic object.

2) If $\B_\lambda=(U_\lambda, (J_\lambda, b_\lambda), E_\lambda)$ and $\B'_\lambda=(U_\lambda, (K_\lambda, d_\lambda), E_\lambda)$ are weakly equivalent (see Definition \ref{weq}), then $\B_\lambda$  can be replaced in the previous definition by $\B'_\lambda$ as both define the same assignment.

3) Take two charts of the cover, say $U_\lambda$ and $U_\beta$.
The restrictions of $\B_\lambda=(U_\lambda, (J_\lambda, b_\lambda), E_\lambda)$ and of $\B_\beta=(U_\beta, (J_\beta, b_\beta), E_\beta)$  to $U_\lambda \cap U_\beta$ are weakly equivalent as both define the same assignment.

4) If 
\begin{equation}
\begin{array}{ccc}
 \mathcal F & & \mathcal F_1 \\
(W_0,E_0)&\longleftarrow & (W_1,E_1)
\end{array}
\end{equation}
is a transformation as in (\ref{r5436}), then $(\mathcal F_1, (W_1,E_1))$ is also a general basic object. To check this, take the natural lifting of the cover $\{U_\lambda\}$ on $W_0$, say
\begin{equation}
\begin{array}{ccc}
 (J_\lambda, b_\lambda) & & \mathcal ((J_\lambda)_1, b_\lambda) \\
(U_\lambda ,E_\lambda)&\longleftarrow & ((U_\lambda)_1 ,(E_\lambda)_1)\\
\end{array}
\end{equation}

5) If $F_0\subset W_0$ is the closed set assigned by $(\mathcal F , (W_0,E_0))$, then there is a function 
$$ \ord: F_0 \longrightarrow \mathbb Q$$
obtained by patching the functions $ \ord^\lambda: \Sing(J_\lambda, b_\lambda) \longrightarrow  \mathbb Q$ (see 3) and Corollary \ref{rfj}). 
\end{remark}

\begin{corollary}\label{cor28}
A non-embedded general basic object, say $(\mathcal F, (W_0,E_0))$, defines an assignment  of closed sets and functions  (see Definition \ref{aoech}), say $(\mathcal HF, (W_0,E_0),\mathbb Q)$, by choosing Hironaka's functions order: 
\begin{enumerate}
\item Set $g_0= \ord: F_0 \longrightarrow \mathbb Q$ as above.

\item Consider a sequence over $(W_0,E_0)$, say 
$(W_0,E_0)\longleftarrow (W_1,E_1) \longleftarrow \dots \longleftarrow (W_r,E_r)$
for which closed sets are assigned, say
\begin{equation}\label{ruij462}
\begin{array}{cccccccc}
 F_0 & & F_1 & &  & & F_r \\
(W_0,E_0)&\longleftarrow & (W_1,E_1)& \longleftarrow &\dots &\longleftarrow &(W_r,E_r). \\
\end{array}
\end{equation}
Set 
$ g_i=\ord_i: F_i \longrightarrow \mathbb Q 
 \mbox{ for all } 0\leq i\leq r$.
\end{enumerate}
\end{corollary}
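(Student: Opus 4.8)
The plan is to take, chart by chart, Hironaka's order functions provided by Theorem \ref{lfdhi}, to glue them into functions $g_i=\ord_i$ on the closed sets $F_i$, and then to verify the two compatibility axioms of Definition \ref{def229} locally on the cover, where they collapse to the already-available case of a single basic object recorded in Example \ref{34}.

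Concretely, for $g_0$ there is nothing new to do: Remark \ref{rk72}(5) already produces $\ord\colon F_0\to\mathbb Q$ by patching the local order functions $\ord^\lambda\colon\Sing(J_\lambda,b_\lambda)\to\mathbb Q$, the patching being legitimate because on $U_\lambda\cap U_\beta$ the basic objects $\B_\lambda$ and $\B_\beta$ restrict to weakly equivalent ones (Remark \ref{rk72}(3)), so their order functions agree there by Corollary \ref{rfj}; upper semicontinuity is a local property and is inherited from the observation following Theorem \ref{lfdhi}. For the higher $g_i$, fix a sequence over $(W_0,E_0)$ for which $(\mathcal F,(W_0,E_0))$ assigns closed sets $F_0,\dots,F_r$. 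Iterating the transform of Definition \ref{def32} along the first $i$ steps gives an assignment $(\mathcal F_i,(W_i,E_i))$ which, by Remark \ref{rk72}(4), is again a non-embedded general basic object, with distinguished cover the lifted cover $\{(U_\lambda)_i\}$ and local basic objects $(\B_\lambda)_i$, and whose assigned closed set in $W_i$ is exactly $F_i$. Applying Remark \ref{rk72}(5) to $\mathcal F_i$ then yields $g_i:=\ord_i\colon F_i\to\mathbb Q$, upper semicontinuous, with $g_i|_{(U_\lambda)_i}=\ord_{(\B_\lambda)_i}$.

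It then remains to check properties A) and B) of Definition \ref{def229} for the $g_i$. Each is an assertion about a single elementary step $(W_i,E_i)\longleftarrow(W_{i+1},E_{i+1})$ and can be tested after restriction to the charts $(U_\lambda)_{i+1}$: on such a chart the step restricts to a blow-up of $(\B_\lambda)_i$ with center inside $\Sing((J_\lambda)_i,b_\lambda)$ (since $Y_i\subset F_i$) or to a smooth morphism of basic objects, $g_i$ and $g_{i+1}$ restrict to $\ord_{(\B_\lambda)_i}$ and $\ord_{(\B_\lambda)_{i+1}}$, and the desired identity is exactly property A), resp.\ B), for Hironaka's assignment of closed sets and functions of Example \ref{34}. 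Holding on every chart of a cover, it holds globally, so $(\mathcal{HF},(W_0,E_0),\mathbb Q)$ with data $F_i$ and $g_i=\ord_i$ is an assignment of closed sets and functions. The only point requiring care --- and hence the main obstacle --- is the bookkeeping packaged in Remark \ref{rk72}(4)--(5): one must be sure the \'etale cover of $W_0$ lifts along an arbitrary local sequence to an \'etale cover of each $W_i$ (covering the new exceptional components as well), and that on the overlaps of the lifted cover the local basic objects $(\B_\lambda)_i$, $(\B_\beta)_i$ remain weakly equivalent, i.e.\ that restriction of assignments commutes with transformation, so that Corollary \ref{rfj} still forces $\ord_{(\B_\lambda)_i}=\ord_{(\B_\beta)_i}$ on those overlaps and the glued $g_i$ is unambiguous. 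Once this is granted, the statement reduces formally to the single-chart content of Example \ref{34}.
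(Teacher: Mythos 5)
Your proposal is correct and takes essentially the approach the paper intends: the corollary is stated without explicit proof precisely because it is meant to follow from Remark \ref{rk72} (parts (3)--(5), the latter giving the patching of the local order functions via weak equivalence and Corollary \ref{rfj}), combined with Example \ref{34} for the single-chart verification of axioms A) and B) of Definition \ref{def229}. You correctly isolate the only point requiring care --- that the \'etale cover lifts along the local sequence and that the local basic objects stay weakly equivalent on overlaps after transformation --- which is exactly the content of Remark \ref{rk72}(4) as invoked in the paper.
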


\end{parrafo}

\vspace{0.3cm}

Let us reformulate Hironaka's Theorem \ref{H4} in terms of assignments. More precisely, in terms of general basic objects as defined in \ref{nonemb}.
 
 \begin{theorem}{\rm ({\bf Hironaka})}.\label{H5} Fix $X_0\subset W_0$ and $(W_0,E_0)$. Set $\overline h =\max HS_{X_0}$. The assignment of closed sets $ (\mathcal F({\overline h}), (W_0,E_0))$   defined by $X_0$ and $\overline h $ in Example \ref{ejHSa}, is a non-embedded general basic object of dimension $d=\dim W_0$.

\end{theorem}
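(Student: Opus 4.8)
The plan is to read Hironaka's Theorem \ref{H4} through the language of assignments of closed sets, so that the statement becomes essentially a translation. By Definition \ref{nonemb} it suffices to exhibit an \'etale cover $\{U_\lambda\}$ of $W_0$ for which each restriction $((\mathcal F(\overline h))_{U_\lambda},(U_\lambda,E_\lambda))$ (in the sense of (\ref{rcab})) is the assignment of closed sets attached, via Example \ref{ejdebo}, to a basic object $\mathcal B_\lambda=(U_\lambda,(J_\lambda,b_\lambda),E_\lambda)$. The dimension statement is then automatic: each $U_\lambda\to W_0$ is \'etale, so $\dim U_\lambda=\dim W_0=d$.

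First I would fix the cover. Theorem \ref{H4} provides an \'etale cover $\{U_\lambda\}$ of $W_0$ such that, writing $X_\lambda\subset U_\lambda$, $(U_\lambda,E_\lambda)$ for the pull-backs of $X_0\subset W_0$, $(W_0,E_0)$, and keeping $\overline h=\max HS_{X_0}$, there is a basic object $\mathcal B_\lambda=(U_\lambda,(J_\lambda,b_\lambda),E_\lambda)$ with $\Sing(J_\lambda,b_\lambda)=X_\lambda(\overline h)$, together with the bijective correspondence between local sequences recorded there. For the charts $U_\lambda$ meeting the stratum $X_0(\overline h)$ one has $\max HS_{X_\lambda}=\overline h$ (compatibility of the Hilbert--Samuel function with \'etale morphisms, see \ref{ladeHS}) and this is exactly Theorem \ref{H4}; for the charts that miss $X_0(\overline h)$ one has $X_\lambda(\overline h)=\emptyset$, and one simply takes $\mathcal B_\lambda=(U_\lambda,(\calo_{U_\lambda},1),E_\lambda)$, whose singular locus is empty.

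Next I would match the two assignments over a fixed $U_\lambda$. At level zero, $(\mathcal F(\overline h))_{U_\lambda}$ assigns the pull-back of $F_0=X_0(\overline h)$, namely $X_\lambda(\overline h)=\Sing(J_\lambda,b_\lambda)$, which is the level-zero closed set of $\mathcal F_{\mathcal B_\lambda}$. For the higher levels, $\mathcal F_{\mathcal B_\lambda}$ assigns closed sets exactly to the local sequences of $\mathcal B_\lambda$ in the sense of Definition \ref{lsbo} --- type A blow-ups whose center lies in the current $\Sing(J_i,b_\lambda)$ and has normal crossings with $E_i$, or type B smooth pull-backs --- assigning $\Sing(J_{\lambda,i},b_\lambda)$ at level $i$; whereas $(\mathcal F(\overline h))_{U_\lambda}$ assigns closed sets to the local sequences of $X_\lambda\subset U_\lambda$ in the sense of Definition \ref{fed14} subject to the extra constraint $Y_i\subset X_{\lambda,i}(\overline h)$ on the type A steps (and again normal crossings with $E_i$), assigning $X_{\lambda,i}(\overline h)$. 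By Theorem \ref{H4}, along any such sequence one has $\Sing(J_{\lambda,i},b_\lambda)=X_{\lambda,i}(\overline h)$ for every $i$; hence the two lists of admissible sequences coincide and so do the closed sets assigned at each level, i.e.\ $(\mathcal F(\overline h))_{U_\lambda}=\mathcal F_{\mathcal B_\lambda}$, and Definition \ref{nonemb} is satisfied.

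The deep content sits entirely inside Theorem \ref{H4}; the step I expect to require the most care is the bookkeeping in that last matching. One must check that the restriction of $\mathcal F(\overline h)$ to $U_\lambda$ --- formally obtained by prepending the \'etale step $W_0\longleftarrow U_\lambda$ (a transformation of type B) and then discarding it --- really is the Hilbert--Samuel assignment built directly from $X_\lambda\subset U_\lambda$ and the same value $\overline h$, and in particular that a center forced to lie in $X_i(\overline h)$ upstairs pulls back to a center in $X_{\lambda,i}(\overline h)$ and conversely. This is precisely where the \'etale compatibility of the Hilbert--Samuel stratification (as in Remark \ref{rk4} and \ref{ladeHS}) enters, and it closes the argument.
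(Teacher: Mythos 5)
Your proposal is correct and follows the same route the paper implicitly intends: Theorem \ref{H5} is stated without proof as a reformulation of Theorem \ref{H4}, and the expected argument is precisely the definition-chase you carry out --- pull the \'etale cover and basic objects $\mathcal B_\lambda$ from Theorem \ref{H4}, invoke \'etale compatibility of the Hilbert--Samuel function (\ref{ladeHS}, \ref{rk4}) to identify $(\mathcal F(\overline h))_{U_\lambda}$ with the Hilbert--Samuel assignment of $X_\lambda\subset U_\lambda$ at the value $\overline h$, and then use the bijection of local sequences and the equalities $\Sing(J_{\lambda,i},b_\lambda)=X_{\lambda,i}(\overline h)$ from Theorem \ref{H4} to conclude $(\mathcal F(\overline h))_{U_\lambda}=\mathcal F_{\mathcal B_\lambda}$. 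Your treatment of the charts missing the stratum (via a basic object with empty singular locus, so that both assignments only admit type-B steps and assign $\emptyset$ throughout) is exactly the needed bookkeeping that the paper leaves to the reader.
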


 \begin{parrafo}

We generalize now the concept of a basic object, by taking a basic object together with a closed immersion. This will define an assignments of closed sets and, in order to ease the notation, we shall call it an embedded basic object. 

So here an embedded basic object will be an assignment of closed sets defined by a basic object and a closed immersion. This notion will be essential for our forthcoming discussion. There will be two distinguished cases: tame, and non-tame, according to the conditions on  the immersion.

\end{parrafo}

\begin{parrafo}\label{paso1}{\bf Embedded basic objects: the Tame Case}.
Fix $(N_0,E''_0)$,  where $N_0$ is smooth and $E''_0=\{H_1, \dots ,H_s\}$ is a set
of hypersurfaces with only normal crossings. Let ${W}_0$ be a closed smooth subscheme in $N_0$ and assume that each $H_i\in E''_0$ intersects ${W}_0$ transversally, defining a smooth hypersurface $\overline{H}_i$. Moreover, assume that ${ E}_0=\{\overline{H}_{1}, \dots ,\overline{H}_s\}$ have normal crossings in $W_0$. In what follows this strong transversality condition will be indicated by:
\begin{equation}\label{31110}
{E}_0=E''_0\pitchfork W_0.
\end{equation}

Fix a non-zero ideal $J_0$ in $\calo_{{W}_0}$, a positive integer $b$ and the basic object $\B_0=(W_0, (J_0,b), E_0)$. Set $F_0=\Sing(J_0,b)$, which is closed in 
${ W}_0$ and hence in $N_0$.
$\B_0$ defines an assignment of closed sets over
$({ W}_0, { E}_0)$ (see Example \ref{ejdebo}). We claim that it also defines an assignment of closed sets over $(N_0, E''_0)$, say 
$$(\mathcal{F}',(N_0,E''_0)),$$
that we call an {\em embedded basic object in the tame case}:
If $Y_0\subset F_0=\Sing(J_0,b) \subset W_0$ is permissible for $\B_0$, then the same $Y_0$ defines $(N_0, E''_0)\longleftarrow (N_{1},E''_{1}) $ (due to the condition 
${E}_0=E''_0\pitchfork W_0$, which ensures that $Y_i$ has normal crossing with $E_0''$).

This defines a transform of $\B_0$, say $\B_1=({ W}_1, (J_1,b), { E}_1)$, a closed immersion $W_1\subset N_1$, and again ${E}_1=E''_1\pitchfork W_1$.

On the other hand, a smooth morphism  $(N_0, E''_0)\longleftarrow (N_{1},E''_{1}) $ induces:
 \begin{equation}
\begin{array}{ccc}
 (J_0,b) & & (J_{1},b) \\
({ W}_0, { E}_0) & \longleftarrow & ({ W}_{1} ,{ E}_{1})
\end{array}
\end{equation}
also smooth, where ${ W}_1$ is the pull-back of ${ W}_0$ in $N_1$. Moreover, 
${E}_1=E''_1\pitchfork W_1$ as before.

\begin{parrafo}{\bf Embedded basic objects: the non-tame  case}.\label{ods} 

We generalize now the setting in \ref{paso1}. Consider
\begin{itemize}
\item[$\bullet$] $(N_0,E'_0)$ with $E_0'=(E'_0)^+\cup(E'_0)^-$ (a partition of $E_0'$ as a disjoint union).
\item[$\bullet$] A closed immersion $W_0\subset N_0$ and 
$(W_0,E_0)$ where ${E}_0=(E'_0)^+\pitchfork W_0$.
\item[$\bullet$] A basic object $\B_0=({ W}_0,(J_0,b), { E}_0)$ (see \ref{reform}). 
\end{itemize}

So we assume here that ${E}_0=W_0\pitchfork(E'_0)^+$ (\ref{31110}), but we know nothing about the intersection of hypersurfaces of $(E'_0)^-$ with $W_0$.  If $(E'_0)^-=\emptyset$, or if hypersurfaces of $(E'_0)^-$ do not intersect $F_0=\Sing(J_0.b)$, then we are back to the case in \ref{paso1} (in the tame case).

If $Y_0\subset F_0=\Sing(J_0,b) \subset W_0$ , and if $Y_0$ has normal crossings with $E'_0$ (in $N_0$), then it has normal crossings with $E_0$ (in $W_0$), and defines a transformation of $\B_0$, say $\B_1$, and
\begin{equation}
\begin{array}{ccc}
 (J_0,b) & & (J_1,b) \\
(W_0, E_0)&\longleftarrow & (W_{1},E_{1}).
\end{array}
\end{equation}

We define now an assignment of closed sets 
$$(\mathcal{F}',(N_0,E'_0)),$$ (an embedded basic object) which will rely on the three previous conditions: on the partition of $E'_0$,  on $\B_0=({ W}_0,(J_0,b), { E}_0)$, and on the embedding $W_0\subset N_0$.

A) If $Y_0\subset F_0=\Sing(J_0,b) \subset W_0$, and if $Y_0$ has normal crossings with $E'_0$ (in $N_0$), then set

\begin{equation}
\xymatrix@R=0pc@C=2pc{
F_0=\Sing(J_0,b) & F_1=\Sing(J_1,b)\\
(N_0, E'_0) & (N_{1},E'_{1})\ar[l]
}
\end{equation}
and define a partition $$E'_{1}=(E'_{1})^+ \cup (E'_{1})^-$$
where $(E'_{1})^-$ consists of the strict transform of hypersurfaces 
in $(E'_{0})^-$, and $(E'_{1})^+$ consists of the strict transform of hypersurfaces 
in $(E'_{0})^+$ together with the new exceptional hypersurface.
Note that ${E}_1=(E'_1)^+\pitchfork W_1$. 
%Note also that the class of permissible centers of $\mathcal{F}'$ is smaller than the class for $\B_0$.

\vspace{0.2cm}

B) If $(N_0, E'_0)\longleftarrow  (N_{1},E'_{1})$ is obtained from a smooth 
morphism, then set
\begin{itemize}
\item[$\bullet$] $(N_1,E'_1)$ with $E_1'=(E'_1)^+\cup(E'_1)^-$,
\item[$\bullet$]  $W_1\subset N_1$ and 
$(W_1,E_1)$ where ${E}_1=(E'_1)^+\pitchfork W_1$,
\item[$\bullet$] a basic object $\B_1=({ W}_1,(J_1,b), { E}_1)$, 
\end{itemize}
simply by taking pull-backs.

\end{parrafo}

\begin{definition} \label{ebgbo}
\ \ A) Fix, as above, $(N_0,E'_0)$ together with a partition $E'_0=(E_0')^+\cup (E_0')^-$ 
(disjoint union).
An assignment 
$(\mathcal F , (N_0,E'_0))$ is said to be an {\em embedded general basic object} of dimension $d$ if there is a cover of $N_0$, say $\{U_{\lambda}\}$, and for each restriction  
$$((\mathcal F)_{U_\lambda}, 
(U_\lambda, (E'_0)_\lambda))  \ \hbox{ and }  \  (E'_0)_\lambda=
(E_0')_\lambda^+\cup (E_0')_\lambda^-$$
there is a closed smooth $d$-dimensional subscheme $\overline{U}_\lambda$ of $U_\lambda$, and a basic object 
$$\B_\lambda=({\overline U}_\lambda,(J_\lambda,b_\lambda), ( {\overline E'}^+_0)_\lambda),$$
where $( {\overline E'}^+_0)_\lambda= { E'}^+_0\pitchfork {\overline U}_\lambda$ (\ref{31110}), so that the conditions in \ref{ods} holds. Namely  that $((\mathcal F)_{U_\lambda}, (U_\lambda, (E'_0)_\lambda))$ is the assignment of closed sets defined, as above, by the partition 
$ (E'_0)_\lambda=
(E_0')_\lambda^+\cup (E_0')_\lambda^-$ and $\B_\lambda$. 

\vspace{0.2cm}

B) Fix $(\mathcal F , (N_0,E'_0))$ as in A), and let $F_0$ be the closed set assigned to $(N_0,E'_0)$. Then $(\mathcal F , (N_0,E'_0))$ is a {\em tamely embedded general basic object}, or simply a  {\em tame general basic object}, if no hypersurface of $(E_0')_\lambda^-$ intersects $F_0$ (e.g., if $(E_0')_\lambda^-=\emptyset$).
\end{definition}

The following Theorem of Hironaka, which is an extension of Theorem \ref{rfj}, will allow us to define Hironaka's functions on general basic objects.

\begin{theorem}\label{teo23} Fix $(N_0,E'_0)$, and $E'_0=(E_0')^+\cup (E_0')^-$ 
(disjoint union). Assume that:
\begin{enumerate}
\item there are closed smooth subschemes of $N_0$, say ${\overline W}_0$ and ${\overline V}_0$.

\item there are basic objects $({\overline W}_0, (J_0,b), {\overline E'}^+_0))$ (where $ {\overline E'}^+_0=( {E'}_0)^+\pitchfork W_0$) and  $({\overline V}_0,(K_0,d),   {\overline F'}^+_0))$ (where $ {\overline F'}^+_0=( { E'}_0)^+\pitchfork V_0$) defining assignments of closed sets over $(N_0, E'_0)$ as in \ref{ods}.
\end{enumerate}

If dim ${\overline W}_0$=dim ${\overline V}_0$, and if both assignments of closed sets over $(N_0, E'_0)$ coincide (i.e., define the same closed sets), then 
$$ \frac{\nu_x(J_0)}{b}=\frac{\nu_x(K_0)}{d}$$
at any $x\in \Sing(J_0,b)=\Sing(K_0,d)$.

\end{theorem}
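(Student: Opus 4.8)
The plan is to reduce the statement to the non-embedded case, which is settled by Corollary~\ref{rfj} via Theorem~\ref{lfdhi}. Put $\delta=\dim\overline W_0=\dim\overline V_0$, $n=\dim N_0$ and $e=n-\delta$. Everything in sight --- the hypothesis that the two assignments of closed sets over $(N_0,E'_0)$ agree, the orders $\nu_x(J_0)$ and $\nu_x(K_0)$, and the integers $\delta$ and $n$ --- is unchanged by restriction to an \'etale neighbourhood of $x$, so one may localise there at will. The heart of the matter is the following claim: \emph{the assignment of closed sets $(\mathcal F,(N_0,E'_0))$ attached, as in \ref{ods}, to a basic object $\B_0=(\overline W_0,(J_0,b),{\overline E'}^+_0)$ and a closed immersion $\overline W_0\subset N_0$, together with the integer $\delta=\dim\overline W_0$, determines the rational number $\nu_x(J_0)/b$ at every $x\in\Sing(J_0,b)$.} Granting the claim, the theorem is immediate: by hypothesis the two basic objects in the statement define the very same $\mathcal F$ over $(N_0,E'_0)$ and their underlying smooth subschemes have the same dimension $\delta$, hence $\nu_x(J_0)/b$ and $\nu_x(K_0)/d$ are extracted from identical data by one and the same recipe, so they coincide.

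To prove the claim I would first invoke Theorem~\ref{lfdhi} in the sharper form spelled out in Remark~\ref{slbre}: the number $\nu_x(J_0)/b$ is recovered from the local codimensions $\codim_{\overline W_i,x_i}\Sing(J_i,b)$, ranging over the local sequences of $\B_0$ over $(\overline W_0,{\overline E'}^+_0)$ and over all compatible chains of points $x_i\mapsto x$. The point to extract from the proof in \ref{trdh} is that it is enough to use local sequences of a special shape: every blow-up is centred at a closed point of the current $\Sing(J_i,b)$, and every smooth morphism is a projection $\overline W_i\times\mathbb A^1\to\overline W_i$. Each such special local sequence lifts verbatim to a local sequence of the embedded basic object $\mathcal F$ over $(N_0,E'_0)$: replace a blow-up at a point $y_i\in\Sing(J_i,b)\subset\overline W_i$ by the blow-up of $N_i$ at the same point $y_i$ --- this is permissible for $\mathcal F$ because $y_i$ lies in the closed set assigned by $\mathcal F$ and, being a single point, automatically has normal crossings with $E'_i$, in particular with $(E'_i)^-$ --- and replace a projection $\overline W_i\times\mathbb A^1\to\overline W_i$ by $N_i\times\mathbb A^1\to N_i$. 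At every stage $\overline W_i$ is the strict transform, respectively the pull-back, of $\overline W_{i-1}$ inside $N_i$, and it keeps codimension $e$; since the blow-up centres are unchanged so are the transforms $(J_i,b)$, and therefore the closed set assigned by $\mathcal F$ at stage $i$ is exactly $\Sing(J_i,b)$ sitting inside $\overline W_i\subset N_i$. Conversely, any local sequence of $\mathcal F$ of this special shape restricts, by pulling $\overline W_i$ back, to a special local sequence of $\B_0$.

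Consequently, for every special local sequence and every chain of points,
\[
\codim_{\overline W_i,x_i}\Sing(J_i,b)=\codim_{N_i,x_i}\bigl(\text{closed set assigned by }\mathcal F\text{ at stage }i\bigr)-e,\qquad e=n-\delta,
\]
so the left-hand family of numbers is read off from $\mathcal F$ together with $\delta$. Feeding this family into the recipe furnished by Theorem~\ref{lfdhi} yields $\nu_x(J_0)/b$, which proves the claim and hence the theorem. The delicate point, and the one I expect to be the main obstacle, is the reduction to local sequences of the special shape: one must verify from the proof of Theorem~\ref{lfdhi} that point blow-ups and projections $\times\,\mathbb A^1$ already suffice to pin down the order, since these are precisely the transformations that lift transparently to the ambient $(N_0,E'_0)$ without imposing any condition on the intersection of the chosen centres with the ``extra'' boundary $(E'_i)^-$. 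For a more general local sequence of $\B_0$ this lift can fail: a smooth centre of positive dimension inside $\Sing(J_i,b)$ need not have normal crossings with $(E'_i)^-$ in $N_i$ (this is exactly the non-tame phenomenon isolated in \ref{ods}), and the fibre-product construction one would use to extend a smooth morphism over $\overline W_i$ to one over $N_i$ is not compatible with the intervening blow-ups.
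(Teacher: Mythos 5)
Your proposal follows the same blueprint the paper sketches for this theorem (see the discussion labelled ``Case 0'' in Section 5): view the closed sets $\Sing(J_i,b)$ and $\Sing(K_i,d)$ as closed sets in $N_i$, use the fact that both assignments over $(N_0,E'_0)$ coincide, and invoke the argument of \ref{trdh} --- which expresses $\nu_x(J_0)/b$ purely through codimensions of assigned closed sets --- to conclude. The observation that $\codim_{\overline W_i}\Sing(J_i,b)=\codim_{N_i}\Sing(J_i,b)-(n-\delta)$ is exactly the ``codimension offset'' the theorem needs and matches the role that the equality $\dim \overline W_0=\dim\overline V_0$ plays in the statement. So the overall route is correct, and you are right to isolate the lift of local sequences from $\overline W_i$ to $N_i$ as the point that needs work.

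The gap is in the claim on which the whole reduction hinges: that the proof of Theorem~\ref{lfdhi} only uses blow-ups at closed points together with projections $\times\,\mathbb A^1$. That is not what the proof in \ref{trdh} does. After the projection $W_0\times\mathbb A^1\to W_0$ and the $N-1$ point blow-ups, the argument continues with $S$ further blow-ups, all centred at the exceptional hypersurface $H_{N}\subset W_N$ (see the step around (\ref{gwrls}) and (\ref{r3c5w})); it is precisely the maximal value of $S$, namely $\lfloor (N-1)(b'/b - 1)\rfloor$, that pins down the rational number $b'/b$. With only point blow-ups and $\mathbb A^1$-projections, the argument determines whether $H_N\subset F_N$ for each $N$, hence at best $\lceil b/(b'-b)\rceil$ --- not enough to recover $b'/b$ exactly. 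So the reduction to sequences of ``special shape'' as you define them fails, and the hypersurface blow-ups you have omitted are exactly the positive-dimensional centres whose normal crossings with $(E'_i)^-$ is at issue, i.e.\ the obstacle you correctly worry about but then claim to have sidestepped. To close the gap you must show that the particular center $H_N^W=H_N^N\cap W_N$ used in \ref{trdh} is permissible in $N_N$ relative to the partitioned boundary $E'_N$ --- using, for instance, that $H_N^W$ is contained in the newly introduced exceptional divisor $H_N^N\in(E'_N)^+$, that the construction may be localised near the point $x_N$ which lies off the strict transforms of the hypersurfaces that were ``blown away'', and that the whole discussion is local --- or replace the $S$ hypersurface blow-ups in \ref{trdh} by an equivalent device that lifts transparently. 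As written, the proposal asserts, but does not establish, that the reduction works, and the assertion as stated is false.
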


\begin{remark} Note that Remark \ref{rk72} extends to this context:

1) The transform of an embedded general basic object is an embedded general basic object.

2) If $F_0\subset W_0\subset N_0$ is the closed set assigned by the $d$-dimensional general basic object $(\mathcal F , (N_0,E_0))$, then there is a function 
$ \ord^d: F_0 \longrightarrow \mathbb Q$
defined by patching 
$ \ord^d_\lambda: \Sing(J_\lambda, b_\lambda) \longrightarrow  \mathbb Q.$

Also Corollary \ref{cor28} extends word by word.
\end{remark}

\end{parrafo}

The last reformulation of Hironaka's Theorem \ref{H5} can now be strengthen as follows:
 
 \begin{theorem}\label{H6} (\cite{BV1}, Prop 11.4) Fix $X_0\subset N_0$ and $(N_0,E'_0)$. Set $\overline h =\max HS_{X_0}$.  The assignment of closed sets $ (\mathcal F({\overline h}), (W_0,E_0))$   defined by $X_0$ and $\overline h $ in Example \ref{ejHSa}, is an embedded general basic object of dimension $d$, where $d$ is dimension of $X_0$ locally at all closed point of its highest Hilbert-Samuel stratum.

\end{theorem}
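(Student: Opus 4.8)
\textbf{Proof proposal for Theorem \ref{H6}.}

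The plan is to deduce Theorem \ref{H6} from Theorem \ref{H5} (the non-embedded version) together with Theorem \ref{H4}, by keeping track of the ambient embedding. First I would recall the statement of Theorem \ref{H4}: after restricting to a finite \'etale cover $\{U_\lambda\}$ of $N_0$, one produces on each chart a basic object attached to the value $\overline h$, living on a smooth $d$-dimensional subscheme. The key point is to observe that Theorem \ref{H4} is proved by an inductive construction (the so-called \emph{Hironaka trick}, using the Hilbert--Samuel stratification to find a hypersurface of maximal contact and descend in dimension) that \emph{intrinsically} produces not only the basic object $\B_\lambda=(\overline W_\lambda,(J_\lambda,b_\lambda),E_\lambda)$ but also the embedding $\overline W_\lambda\subset U_\lambda$: one starts from the given $X_0\subset N_0$, so a chart $U_\lambda$ of $N_0$ comes with $X_\lambda\subset U_\lambda$ already embedded, and the maximal-contact construction carves out $\overline W_\lambda$ as a smooth subscheme of $U_\lambda$ containing $X_\lambda(\overline h)=\Sing(J_\lambda,b_\lambda)$. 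Thus each restriction $((\mathcal F(\overline h))_{U_\lambda},(U_\lambda,(E'_0)_\lambda))$ is, by construction, the assignment of closed sets attached to the data $(\overline W_\lambda\subset U_\lambda,\ \B_\lambda,\ \text{partition of }(E'_0)_\lambda)$ exactly in the sense of \ref{ods}.

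Second, I would verify the two compatibility conditions that Definition \ref{ebgbo} requires. The transversality condition $(\overline{E'}^+_0)_\lambda=(E'_0)^+_\lambda\pitchfork\overline W_\lambda$ holds because the hypersurfaces in $(E'_0)^+_\lambda$ are the exceptional divisors and strict transforms produced along a sequence with centers inside the Hilbert--Samuel stratum, and the standard normal-crossings bookkeeping (as in \ref{paso1}, \ref{ods}) guarantees that these meet the maximal-contact subscheme $\overline W_\lambda$ transversally; the hypersurfaces placed in $(E'_0)^-_\lambda$ are precisely those about which no transversality with $\overline W_\lambda$ is claimed, which is why the definition allows a non-tame partition. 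The agreement of the assignments on overlaps $U_\lambda\cap U_\mu$ is the heart of the patching: on the overlap the two locally-constructed basic objects $\B_\lambda$ and $\B_\mu$ (restricted via the smooth maps $U_\lambda\cap U_\mu\to U_\lambda$, $U_\lambda\cap U_\mu\to U_\mu$) both have singular locus equal to $X(\overline h)$ restricted to the overlap and, by the functoriality of the construction (or simply by Theorem \ref{teo23} once we know $\dim\overline W_\lambda=\dim\overline W_\mu$), they are weakly equivalent, hence define the same assignment of closed sets there. This is exactly the situation analyzed in Remark \ref{rkH4}.

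Third, I would pin down the dimension $d$. On the chart $U_\lambda$ the maximal-contact subscheme $\overline W_\lambda$ has dimension equal to $\dim X_\lambda$ at the closed points of $X_\lambda(\overline h)$ — more precisely, the codimension of $\overline W_\lambda$ in $U_\lambda$ is forced by how many maximal-contact hypersurfaces were needed, which is an intrinsic invariant of $HS_{X_0}$ along its top stratum. One checks this is independent of $\lambda$ (again by weak equivalence on overlaps together with Theorem \ref{teo23}), so the common value is the $d$ in the statement: the dimension of $X_0$ locally at all closed points of its highest Hilbert--Samuel stratum. Since the datum $(\mathcal F(\overline h),(W_0,E_0))$ was already shown in Theorem \ref{H5} to be a non-embedded general basic object of dimension $d$, and we have now exhibited the charts, the subschemes $\overline U_\lambda=\overline W_\lambda$, the basic objects $\B_\lambda$, and the partitioned boundary $(E'_0)_\lambda=(E'_0)^+_\lambda\cup(E'_0)^-_\lambda$ satisfying \ref{ods}, Definition \ref{ebgbo}(A) is met and the assignment is an embedded general basic object of dimension $d$.

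\textbf{Main obstacle.} The delicate step is the overlap compatibility, i.e. showing that the locally-defined embedded basic objects glue. The subtlety is that maximal contact is only defined locally (in the \'etale topology) and is \emph{not} canonical — different charts may yield genuinely different subschemes $\overline W_\lambda$ and different ideals $J_\lambda$ — so one cannot glue the $\overline W_\lambda$ or the $\B_\lambda$ themselves; one can only glue the \emph{assignments of closed sets} they define. Making this precise requires invoking Theorem \ref{teo23} (the embedded analogue of Hironaka's order-is-intrinsic result) to see that, once the dimensions agree, the two restrictions to an overlap define the same closed sets for every local sequence, hence coincide as assignments. Everything else — transversality of the boundary, the value of $d$, and the reduction to Theorem \ref{H5} — is bookkeeping built on results already established in the excerpt, and I would carry it out in the order above.
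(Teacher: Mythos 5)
The paper gives no proof of Theorem \ref{H6}; it is stated as an imported result with the citation ``\cite{BV1}, Prop.~11.4,'' and the surrounding text does not sketch an argument. So there is no in-paper proof to compare your proposal against, only the surrounding framework (Theorems \ref{H4}, \ref{H5}, \ref{teo23}, Definition \ref{ebgbo}).

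Measured against that framework, the overall shape of your argument is sound, but there is a real conflation at the very start which hides where the content of the cited proposition actually lives. You assert that Theorem \ref{H4} ``intrinsically produces not only the basic object $\B_\lambda=(\overline W_\lambda,(J_\lambda,b_\lambda),E_\lambda)$ but also the embedding $\overline W_\lambda\subset U_\lambda$.'' It does not. Theorem \ref{H4} produces, after an \'etale cover, a basic object $(W,(J,b),E)$ on the \emph{full ambient} $W$ with $\Sing(J,b)=X(\overline h)$; correspondingly, Theorem \ref{H5} only gives a non-embedded general basic object of dimension $\dim W_0$. The point of Theorem \ref{H6} is precisely the \emph{drop} in dimension: the assignment is equivalent, chart by chart, to one arising from a basic object on a smooth $d$-dimensional subscheme $\overline U_\lambda\subset U_\lambda$ with $d=\dim X_0$ along the top Hilbert--Samuel stratum, together with the transversality decomposition $(E'_0)_\lambda=(E'_0)^+_\lambda\cup(E'_0)^-_\lambda$ required by \ref{ods}--\ref{ebgbo}. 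Producing $\overline U_\lambda$ and the $d$-dimensional pair $(J_\lambda,b_\lambda)$ (e.g.\ via iterated hypersurfaces of maximal contact, or via the elimination-algebra machinery of \cite{BV1}), and checking that the resulting assignment of closed sets agrees with $(\mathcal F(\overline h),(N_0,E'_0))$ and is stable under all permissible transformations and pull-backs, is exactly the content of the cited proposition. Your proposal black-boxes this by attributing it to Theorem \ref{H4}, so it would not constitute an independent proof.

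The remaining parts of your sketch are in the right place relative to the paper's framework: gluing on overlaps is handled through Theorem \ref{teo23} (one compares the assignments of closed sets over $(N_0,E'_0)$, not the non-canonical subschemes $\overline U_\lambda$), and the constancy of $d$ across charts and along the sequence is governed by the constancy of the (embedded) dimension of $X_0$ along the top stratum. One terminological slip worth fixing: what the paper calls ``Hironaka's tricks'' (Section \ref{trdh}) is the limit argument recovering $\nu_x(J_0)/b$ from the codimensions of the singular loci along local sequences -- it is not the maximal-contact descent, and it plays no role in constructing $\overline U_\lambda$.
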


\section{The ingredients for Constructive Resolution}

\begin{parrafo}
In this section we aim to define a set $T$ and the function $g$, with values at $T$, leading to the constructive resolution of basic objects as was stated in \ref{crobo}.

Recall that a basic object was defined by $\B_0=(W_{0},(J_{0},b), E_{0})$, where $J_{0}\subset\calo_{W_{0}}$ is a non-zero ideal, $b$ is a positive integer and 
$E_0=\{H_1, \dots H_s\}$ are smooth hypersurfaces in $W_0$ with only normal crossings.  So $(J_{0})_{\xi}\neq 0$ for any $\xi\in W_{0}$.  The singular locus is the closed set $ F_{0}=\Sing(J_{0},b)=\{\xi\in
W_{0}\mid \mathbf{\nu}_{\xi}(J_{0})\geq b\} \subset W_{0}.  $

A center $ Y_{0} $ is said to be \emph{permissible} for the basic
object if $ Y_{0} $ is permissible for $ (W_{0},E_{0}) $
(see \ref{p1}) and $ Y_{0}\subset F_{0} $.  Let
$
W_{0} \overset{\pi_{Y_{0}}}{\longleftarrow} W_{1} $
be the blow-up with center $ Y_{0} $, and denote by $H_{s+1}$ the
exceptional hypersurface.  Assume that $Y_0$ is irreducible with generic point $y_0$ ($\in\Sing(J_0,b)$). There is an ideal $\bar{J}_{1}\subset\calo_{W_{1}} $ such that $
J_{0}\calo_{W_{1}}=I(H_{s+1})^{c_{1}}\bar{J}_{1} $ where $ c_{1}=\nu_{y_0}(J_0)\geq b
$.

We fix on $J_1$ the factorization
\begin{equation}\label{lafac}
	J_{1}=I(H_{s+1})^{c_{1}-b}\bar{J}_{1}  
\end{equation}	
and set
 \begin{equation}
\begin{array}{ccc}
 (J_0,b) & & (J_{1},b) \\
(W_0 , E_0) & \overset{\pi_{Y_0}}{\longleftarrow} & (W_{1} ,E_{1})
\end{array}
\end{equation}
as the \emph{transformation} of the basic object. Here $E_1=\{H'_1, \dots H'_s, H_{s+1}\}$, and $H'_i$ is the strict transform of $H_i$, for $1\leq i\leq s$. To ease the notation we write $E_1=\{H_1, \dots H_s, H_{s+1}\}$.

The value $\frac{\nu_{y_0}(J_0)}{b}(=\ord(y_0))$ depends only on the
weak equivalence class of $\B_0$
and so does
$$\frac{c_1-b}{b}=\frac{\nu_{y_0}(J_0)}{b}-1=\ord(y_0)-1.$$
Note also that $c_1-b$ is the highest exponent of $I(H_{s+1})$ that one can
factor of $J_1$ in (\ref{lafac}).

Consider now a sequence of transformations of basic objects:
\begin{equation}\label{ldes}
\begin{array}{cccccccc}
 (J_0,b) & & (J_1,b) &  & &  &(J_r,b)\\
(W_{0}, E_0) & \overset{\pi_{Y_0}}{\longleftarrow} & (W_{1}, E_1) & \overset{\pi_{Y_1}}{\longleftarrow}  &\cdots  &\overset{\pi_{Y_{r-1}}}{\longleftarrow} &
(W_{r}, E_r)
\end{array}
\end{equation}
with irreducible centers $Y_{i-1}$. For each index $ i $ we fix a factorization
\begin{equation} \label{ExprJi}
	J_{i}=I(H_{s+1})^{a_{1}}\cdots I(H_{s+i})^{a_{i}}\bar{J}_{i}
\end{equation}
so that 
\begin{equation} \label{ExprX}
\frac{a_j}{b}=\ord(y_{j-1})-1,
\end{equation} 
where $y_{j-1}$ denotes the generic point of $Y_{j-1}\subset W_{j-1}$.
Note that $ a_{j}$ is the highest power of
the ideal $I(H_{s+j})$ that divides $J_i$, so this factorization is unique.
\end{parrafo}

\begin{parrafo} \label{DefResBas}
Recall that a sequence (\ref{ldes}) is said to be a \emph{resolution} of
$\B_0=(W_{0},(J_{0},b),E_{0})$ if $ \Sing(J_{r},b)=\emptyset $ (\ref{riic}).  A resolution involves only blow-ups but we will also take into account smooth morphisms.  
These auxiliary morphisms appear in the proof 
of Theorem \ref{lfdhi} (see \ref{trdh}).  From the point of view of constructive resolution it is natural to consider upper semi-continuous functions which are defined up to weak
equivalence.
\end{parrafo}

\begin{definition}\label{casolis} If
 \begin{equation}\label{SLis}
\begin{array}{ccc}
 (J_k,b) & & (J_{k+1},b) \\
(W_k , E_k) & \longleftarrow & (W_{k+1} ,E_{k+1})
\end{array}
\end{equation}
is given by a smooth morphism, lift $J_k$ together with its
factorization in (\ref{ExprJi}), and set
\begin{equation} \label{ExprJ2}
	J_{k+1}=I(H_{s+1})^{a_{1}}\cdots I(H_{s+k})^{a_{k}}\bar{J}_{k}\calo_{W_{k+1}}
\end{equation}
by taking pull back on the previous data.
\end{definition}

This definition allows us to extend expressions (\ref{ExprJi}) to local
sequences (Definition \ref{lsbo}).  Furthermore, Corollary \ref{rfj} ensures
also that $\frac{a_j}{b}$ is determined by the weak
equivalence class of $(W_{0},(J_{0},b),E_{0})$.
Note also that $a_j$ is the highest power of $I(H_{s+j})$ that divides $J_i$.

\begin{definition} \label{Defword}
For any local sequence, say 
\begin{equation}\label{2dee}
\begin{array}{cccccccc}
 (J_0,b) & & (J_1,b) & & &  &(J_r,b)\\
(W_{0}, E_0) & \longleftarrow & (W_{1}, E_1) & \longleftarrow &
\cdots  &\longleftarrow &
(W_{r}, E_r)
\end{array}
\end{equation}
and any index $ i $, we have fixed an expression of $ J_{i} $ in 
(\ref{ExprJi}). We now define functions	
$$\word_{i}:\Sing(J_{i},b)\longrightarrow\frac{1}{b}\mathbb{Z}\subset\mathbb{Q},\quad \word_{i}(\xi)=\frac{\nu_{\xi}(\bar{J}_{i})}{b},$$
$$\hbox{and }\ \ord_{i}:\Sing(J_{i},b)\longrightarrow\frac{1}{b}\mathbb{Z}\subset\mathbb{Q},\quad \ord_{i}(\xi)=\frac{\nu_{\xi}(J_{i})}{b}.$$
The second is Hironaka's function $\ord$ (see (\ref{ord})). This defines two different assignments of functions (see Definition \ref{def229}), and both
depend only on the weak equivalence class of $\B_0=(W_{0},(J_{0},b),E_{0})$.
Note that for $i=0$, $ \word_{0}=\ord_{0} $.
\end{definition}

\begin{remark}\label{satf}
Constructive resolution of singularities is built around Hironaka's function $\ord$. We argue by looking at the rational numbers in
(\ref{ExprX}), and the new function $\word$.  Note that
$$ \word_i(\xi)=\ord_i(\xi)-\sum_{j\  \xi\in H_j}\frac{a_j}{b}$$
for $\xi\in \Sing(J_i,b)$, and that the right hand side is expressed in terms of Hironaka's function.
In fact, each rational number $\frac{a_j}{b}$ is defined in terms of
Hironaka's functions (see (\ref{ExprX})), and hence every $\frac{a_i}{b}$ depends only on the weak equivalence class of $\B_0=(W_0,(J_0,b),E_0)$. The function $\word_i$ is one of the so called satellite functions, as it is expressed entirely in terms of the function $\ord$.

If $\B_0=(W_0,(J_0,b),E_0)$ is replaced by a
weakly equivalent $(W_{0},(K_{0},d),E_{0})$, the invariants introduced
here do not distinguish them.  This is a good starting point in the search of invariants for constructive resolution.

Note finally that if $\sigma: W'_0 \longrightarrow W_0$ is smooth, then successive pull-backs
applied to (\ref{2dee}) will define a local sequence of $(W'_0,(J_0,b),E'_0)$ (the pull-back of $(W_0,(J_0,b),E_0)$), say
\begin{equation}\label{2de3e}
\begin{array}{cccccccc}
 (J'_0,b) & & (J'_1,b) &  & &  &(J'_r,b)\\
(W'_{0}, E'_0) & \longleftarrow & (W'_{1}, E'_1) & \longleftarrow &\cdots  &\longleftarrow &
(W'_{r}, E'_r),
\end{array}
\end{equation}
and smooth morphisms $\sigma_i: \Sing(J'_i,b) \longrightarrow \Sing(J_i,b)$. It follows from (\ref{ExprJ2}) that 
\begin{equation}\label{wordsm}
\word_i(\sigma_i(\xi))=\word_i(\xi)
\end{equation}
 for any $\xi\in \Sing(J'_i,b)$, and that also the rational numbers $\frac{a_j}{b}$ coincide (at the pull-back).
\end{remark}

\begin{remark}\label{rk460}(Non-increasing property).  It is easy to check that if
$W_{i-1}  \overset{\ \pi_{i-1}}{\longleftarrow}W_i $ in (\ref{2dee}) is obtained by blowing up $Y_{i-1}\subset\Max\word_{i-1}\subset\Sing(J_{i-1},b) $, then
\begin{equation} \label{IneqwordPt}
    \word_{i-1}(\pi_{i-1}(\xi_{i}))\geq\word_{i}(\xi_{i})
\end{equation}
for any $ \xi_{i}\in\Sing(J_{i},b) $.  So if $
Y_{i-1}\subset\Max\word_{i-1} $ for any index $i$ for which $ W_{i-1} \longleftarrow
W_i$ is a blow-up with center $Y_{i-1}$,
then
\begin{equation} \label{Ineqword}
\max\word_{0}\geq\cdots\geq\max\word_{r}
\end{equation}
\end{remark}
\begin{corollary}

Fix a basic object $\B_0=(W_{0},(J_{0},b),E_{0})$ and consider the
assignment of closed sets $ (\mathcal F_{\mathcal B}, (W_0,E_0))$ in
Example \ref{ejdebo}.  Namely, for any local sequence as in
(\ref{rlcww}) set
\begin{equation}\label{SeqTrBas}
\xymatrix@C=1.7pc@R=0pc{
F_0= \Sing(J_0,b)  &F_1=\Sing (J_1,b) & &  F_r=\Sing(J_r,b)&\\
(W_{0}, E_0) &  (W_{1}, E_1)\ar[l] & \cdots\ar[l]
&(W_{r}, E_r),\ar[l]
}\end{equation}
 and define functions $$\word_i: F_i \longrightarrow \mathbb Q \ \hbox{ for } \ \ 0 \leq i \leq r.$$

Then:

\begin{enumerate}
\item This defines an assignment of closed sets and functions which is
non-increasing (\ref{ddusc}).

\item This assignment is independent of the weak equivalence class of
$\B_0=(W_{0},(J_{0},b),E_{0})$ (i.e., if $\B_0=(W_{0},(J_{0},b),E_{0})$
and $\B'_0=(W_{0},(K_{0},d),E_{0})$ are weakly equivalent, they both
define the same assignment of closed sets and functions).
\end{enumerate}
\end{corollary}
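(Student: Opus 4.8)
## Proof proposal

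The plan is to verify the two clauses separately, drawing on material already assembled in the excerpt. For clause (1), the point is that $\word_i$ does behave like a non-increasing function in the precise sense of Definition~\ref{ddusc}. First I would record that for any local sequence \eqref{rlcww} the sets $F_i=\Sing(J_i,b)$ are exactly those assigned by the basic object $\B_0$ (Example~\ref{ejdebo}), so we genuinely have an \emph{assignment of closed sets}; then I would check the two compatibility axioms of Definition~\ref{def229}. Axiom (A), equality of $\word_i$ across the locus $F_{i+1}\setminus H_{i+1}=F_i\setminus Y_i$ when $W_i\leftarrow W_{i+1}$ is a blow-up, follows because away from the new exceptional hypersurface the transform $\bar J_{i+1}$ agrees with $\bar J_i$ (the factorization \eqref{ExprJi} only records the exponents along the $H_{s+j}$, and near a point of $F_i\setminus Y_i$ the blow-up is an isomorphism). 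Axiom (B), equality under pull-back by a smooth morphism $\sigma$, is exactly \eqref{wordsm} in Remark~\ref{satf}, which in turn is read off from the definition \eqref{ExprJ2} of how the factorization is lifted. Finally, the \emph{non-increasing} inequality \eqref{eddes} is precisely the content of Remark~\ref{rk460}: if each monoidal center satisfies $Y_i\subset\Max\word_i$, then \eqref{IneqwordPt} gives $\word_i(\pi_i(\xi))\ge\word_{i+1}(\xi)$. Assembling these three facts gives clause (1).

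For clause (2), I would argue that every piece of data entering the assignment is already known to be an invariant of the weak equivalence class of $\B_0$. The closed sets $F_i=\Sing(J_i,b)$ are such invariants by definition of weak equivalence (Definition~\ref{weq}): if $\B_0$ and $\B_0'=(W_0,(K_0,d),E_0)$ are weakly equivalent, then every local sequence of one is a local sequence of the other and $\Sing(J_i,b)=\Sing(K_i,d)$ along it. It remains to see that the \emph{values} of $\word_i$ coincide. By the displayed identity in Remark~\ref{satf},
\begin{equation*}
\word_i(\xi)=\ord_i(\xi)-\sum_{j:\ \xi\in H_j}\frac{a_j}{b},
\end{equation*}
so it suffices to show each term on the right depends only on the weak equivalence class. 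The function $\ord_i$ is Hironaka's order function, and Theorem~\ref{lfdhi} (equivalently Corollary~\ref{rfj}) says precisely that $\ord_i$ is determined by the closed sets defined by the basic object, hence is a weak-equivalence invariant; applying this to each transform $\B_i$ in the sequence handles the $\ord_i$ term. For the correction terms, \eqref{ExprX} expresses $a_j/b=\ord(y_{j-1})-1$ where $y_{j-1}$ is the generic point of the center $Y_{j-1}$; since the centers of a local sequence and the function $\ord$ are both weak-equivalence invariants, so is each $a_j/b$. Combining, $\word_i$ takes the same values on $\B_0$ and $\B_0'$, and since they define the same closed sets $F_i$ as well, they define the same assignment of closed sets and functions.

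The main obstacle I anticipate is being careful about the \emph{bookkeeping of the factorization} \eqref{ExprJi} under the two kinds of elementary step, so that axioms (A) and (B) are checked against the \emph{same} choice of factorization throughout the local sequence. Once one grants — as the excerpt does in Definitions~\ref{casolis} and~\ref{Defword} and Remark~\ref{satf} — that this factorization is canonically propagated (by strict transform through blow-ups, by pull-back through smooth morphisms) and that the resulting exponents $a_j$ are intrinsic, everything reduces to citing Remark~\ref{rk460} for non-increasingness and Theorem~\ref{lfdhi} for weak-equivalence invariance. So the proof is essentially an assembly of results already in place; the only genuine work is to confirm the two axioms of Definition~\ref{def229} hold for $\word$, which is routine given \eqref{wordsm} and the behavior of $\bar J_i$ away from the exceptional locus.
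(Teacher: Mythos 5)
Your proof is correct and follows essentially the same route the paper takes: the paper does not give a separate proof of this corollary but instead lets it follow from the preceding material — Remark~\ref{rk460} supplies the non-increasing inequalities, Remark~\ref{satf} records that $\word_i$ is a satellite of $\ord$ (via the identity $\word_i(\xi)=\ord_i(\xi)-\sum_{j:\,\xi\in H_j}a_j/b$ with each $a_j/b$ determined by $\ord$ through~(\ref{ExprX})), and Theorem~\ref{lfdhi}/Corollary~\ref{rfj} give the weak-equivalence invariance of $\ord$. Your assembly of these pieces, together with the check of axioms (A) and (B) of Definition~\ref{def229} via~(\ref{wordsm}) and the behaviour of $\bar J_i$ off the exceptional locus, is exactly the intended argument.
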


\begin{remark}\label{rmk48}
The functions $\word$ are defined in terms of the functions order, and satisfies the condition of our Handy Lemma \ref{Handy}. In particular, the inequalities  (\ref{Ineqword}) are as in Remark \ref{kkr}. This is our second example of an assignment of closed sets and functions with the non-increasing property in Definition \ref{ddusc} (see also Example \ref{35}).
\end{remark}

\begin{remark}\label{sfrobo} {\bf (Strategy for resolution of basic objects).}

Fix a basic object $\B_0=(W_{0},(J_{0},b),E_{0})$ and a sequence (\ref{SeqTrBas}); note that for each index $k$ ($0\leq k \leq r$), $ \max\word_{k}\in\dfrac{1}{b}\mathbb{Z} $.  We shall
indicate below that if $ \max\word_{r}=0 $ in (\ref{Ineqword}) then it
is simple to ``extend'' (\ref{SeqTrBas}) to a resolution.
A resolution of a basic object involves
only blow-ups. Consider a sequence of blow-ups
\begin{equation}\label{lant}
\xymatrix@R=0pc@C=3pc{
 (J_0,b) & (J_1,b)   &  &(J_r,b)\\
(W_{0}, E_0) & (W_{1}, E_1)\ar[l]_{\ \ \pi_{Y_0}} & \dots\ar[l]_{\ \ \ \ \ \ \pi_{Y_1}} & (W_{r}, E_r)\ar[l]_{\!\!\pi_{Y_{r-1}}}
}
\end{equation}
 with centers $Y_i \subset
\Max\word_i$ for all index $i$. So
\begin{equation}\label{des491}
\max\word_{0}\geq\cdots\geq\max\word_{r} \hbox{ (as in (\ref{Ineqword})).}
\end{equation}
As $\max\word_{k}\in\dfrac{1}{b}\mathbb{Z} $, in order to define a
resolution of the basic object it would be enough to have a procedure
of choosing centers $Y_i$ so that we may extend (\ref{lant}) in such a way that
$$\max\word_{0}\geq\cdots\geq\max\word_{r}=\cdots=
\max\word_{R-1} > \max\word_{R},$$
for some index $R\geq r$.
In fact, this would lead ultimately to the case
$\max\word_{R}=0$.

If $\max\word_{r}=0$, then $\bar{J}_{r}=\calo_{W_r}$ in
the factorization of $J_r$ presented in
(\ref{ExprJi}), so
\begin{equation} 
	J_{r}=I(H_{s+1})^{a_{1}}\cdots I(H_{s+r})^{a_{r}}	
\end{equation}
(in a neighborhood of $\Sing(J_r,b)$).  In this case it is simple to
define a totally ordered set $\Gamma$, and an upper semi-continuous
function
\begin{equation}\label{lfhd}
 h_r : \Sing(J_r,b) \longrightarrow \Gamma,
\end{equation}
defined entirely in terms of the rational numbers 
\begin{equation}\label{racn}
 \frac{a_i}{b}\ \ \hbox{for}  \  \ 1 \leq i \leq r
\end{equation} 
so that (\ref{lant}) can be extended to a resolution, say
\begin{equation} 
\xymatrix@R=0pc@C=3pc{
 (J_r,b) & (J_{r+1},b)   &  &(J_N,b)\\
(W_{r}, E_r) & (W_{r+1}, E_{r+1})\ar[l]_{\ \ \pi_{Y_r}} & \dots\ar[l]_{\ \ \ \ \ \ \pi_{Y_{r+1}}} & (W_{N}, E_N)\ar[l]_{\!\!\!\!\pi_{Y_{N-1}}}
}\end{equation}
by setting $Y_{r+j}=\Max h_{r+j}$ for  $r\leq r+j<N$ (see \cite{EncVil97:Tirol}).

So the real difficulty, at least for the construction of a resolution of a basic objects $\B_0=(W_{0},(J_{0},b),E_{0})$, is to construct a sequence (\ref{lant}) so as to come
to the case $\max \word_k=0$.

We now introduce a function, called the
``inductive function'' as it is the key for inductive arguments in resolution of
basic objects. It will takes values at $\mathbb{Q}\times\mathbb{Z}$, which will be ordered lexicographically.
\end{remark}

\begin{definition} \label{Deft}
Fix a basic object $\B_0=(W_{0},(J_{0},b),E_{0})$ and a local sequence as in Remark \ref{rk460}, say
\begin{equation}\label{lant2}
\xymatrix@R=0pc@C=3pc{
 (J_0,b) & (J_1,b)   &  &(J_r,b)\\
(W_{0}, E_0) & (W_{1}, E_1)\ar[l]_{\ \ \pi_{Y_0}} & \dots\ar[l]_{\ \ \ \ \ \ \pi_{Y_1}} & (W_{r}, E_r)\ar[l]_{\!\!\pi_{Y_{r-1}}}
}\end{equation}
 with centers $Y_i \subset
\Max\word_i$ for all $i$ for which $W_i \leftarrow W_{i+1}$ is a blow-up. So
\begin{equation}\label{des2491}
\max\word_{0}\geq\cdots\geq\max\word_{r} \hbox{(see  (\ref{Ineqword})).}
\end{equation}
Set $d_r=\dim W_r$ and let $ s_{0} $ be the smallest
index so that 
\begin{equation}\label{leqqf} \max\word_{s_{0}-1}>\max\word_{s_{0}}=\cdots
=\max\word_{r} 
\end{equation}
(so $ s_{0}=0 $ if $ \max\word_{0}=\cdots=\max\word_{r} $).  Set $
E_{r}=E_{r}^{+}\sqcup E_{r}^{-} $, where $ E_{r}^{-} $ are the hypersurfaces of
$ E_{r} $ which are the strict transforms of hypersurfaces of $
E_{s_{0}} $ (and pull-backs if smooth morphisms appear in the sequence). If $ \max\word_{r}\neq 0$ define 
$$ t^{(d_r)}_{r}:\Sing(J_{r},b)\longrightarrow
(\mathbb{Q}\times\mathbb{Z},\leq) \ \mbox{(lexicographic order)}$$
$$ t^{(d_r)}_{r}(\xi)=(\word_{r}(\xi),n_{r}(\xi)), $$ 
$$\hbox{where }\ n_{r}(\xi)=\left\{
\begin{array}{lll}
    \#\{H\in E_{r}\mid \xi\in H\} & {\rm if} & \word_{r}(\xi)<\max\word_{r}  \\
    \#\{H\in E_{r}^{-}\mid \xi\in H\} & {\rm if} & \word_{r}(\xi)=\max\word_{r}
\end{array}
\right. $$
In the same way we define functions $ t^{(d_{r-1})}_{r-1},t^{(d_{r-2})}_{r-2},\ldots,t^{(d_{r-r_0})}_{s_{0}} $.
\end{definition}

We shall later study in \ref{rmk42} the role of this function when constructing resolutions of basic objects. However, the setting of interest here is not only that of basic objects, but also that of basic objects with closed immersions of smooth schemes $W_0\subset N_0$, which we discuss below.

\begin{parrafo}\label{rkccd}
{\bf Immersions in the tame case (1)}. 

A closer look at the setting
in \ref{paso1} is necessary for a better comprehension of our further
discussion.  Fix $\B_0=(W_{0}, (J_0,b) , E_0)$, and assume now that:
\begin{itemize}
\item[$\bullet$] $W_0$ is a (smooth) closed subscheme in a smooth scheme $N_0$, say
$W_0 \subset N_0$.
\item[$\bullet$] There is a set $E''_0$ of hypersurfaces
with normal crossings in $N_0$, and $E_0=E_0''\pitchfork W_0$
(see (\ref{31110})). 
\end{itemize}
 Let $(\mathcal F, (N_{0}, E''_0))$ denote the
assignment of closed sets defined over $(N_0, E''_0)$ by the previous data. The condition $E_0=E_0''\pitchfork W_0$ ensures that any sequence
of blow-ups, say
\begin{equation}\label{lrant}
\begin{array}{cccccccc}
 (J_0,b) & & (J_1,b) &  & &  &(J_k,b)\\
(W_{0}, E_0) & \overset{\pi_{Y_0}}{\longleftarrow} & (W_{1}, E_1) & \overset{\pi_{Y_1}}{\longleftarrow} &
\cdots  &\overset{\pi_{Y_{k-1}}}{\longleftarrow} &
(W_{k}, E_k),
\end{array}
\end{equation}
induces a sequence of blow-ups
\begin{equation}\label{2rvat}
\begin{array}{cccccccc}
(N_{0}, E''_0) & \overset{\pi_{Y_0}}{\longleftarrow} & (N_{1}, E''_1) & \overset{\pi_{Y_1}}{\longleftarrow} &
\cdots  &\overset{\pi_{Y_{k-1}}}{\longleftarrow} &
(N_{k}, E''_k),
\end{array}
\end{equation}
where $W_k$ is closed in $N_k$, and $E_k$ is defined by restricting to $W_k$
the hypersurfaces of $E''_k$.
\end{parrafo}
\begin{parrafo}\label{agre}
In Def \ref{weq} two basic objects $\B_0=(W_0,(J_0,b),E_0)$, and $\B_0'=(W_0,(K_0,d),E_0)$ are said to be weakly equivalent when they define the same closed sets for any local sequence. On the other hand, the notion of local sequence in Def \ref{lsbo} makes use of two kinds of transformations, namely: A) defined by monoidal transformation, B) those defined by a smooth morphism.

It is natural to ask if it suffices to check weak equivalence of $\B_0$ and $\B_1$, by checking the equality of closed sets in Def \ref{weq},
for a certain subclass of local sequences. It can be proved that this is in fact the case. It suffices to consider local sequences where transformations of type B), defined by a
smooth morphism $\sigma_i: W_{i+1} \longrightarrow W_i$, a are restricted to the following two cases:
\begin{itemize}
\item $W_{i+1}=\mathbb{A}^n_k\times W_i$, and $\sigma_i$ is the projection on the first coordinate.

\item $W_{i+1}$ is an open subset of $W_{i}$ and $\sigma_i$ is the inclusion.
\end{itemize}
\end{parrafo}
This fact is well know (see e.g. \cite{BGV}). So throughout this paper one could also have consider only smooth maps of these two prescribed forms. We will not use this fact in this presentation.
\vspace{0.2cm}

\begin{parrafo}
{\bf Immersions in the tame case (2). Neglecting the ambient space}.
\label{38} 

In \ref{rkccd} only blow-ups were considered.  If $(N_0,E_0'')\longleftarrow (N_1,E_1'')$ is now the pull-back defined by a smooth morphism, then the inclusion $W_0\subset N_0$ can be
lifted to $W_1\subset N_1$.  Moreover, this gives rise to a smooth
morphism $(W_0,E_0) \longleftarrow (W_1,E_1)$, and $E_1=E_1''\pitchfork W_1$. But, unfortunately, not every smooth
morphism $(W_0,E_0) \longleftarrow (W_1,E_1)$ arises from one over $(N_0,E_0'')$. The claim is clearly true if we consider smooth morphism as in \ref{agre}, and if the reader is willing to accept the statement therein, then Prop \ref{pr382}, below, is easy to check, and Lemma \ref{lem415} is avoidable.

Firstly we show that if the basic objects $\B_1=(W_{0}, (J_0,b) , E_0)$ and $\B_2=(W_{0}, (K_0,d), E_0)$ are weakly equivalent, then they both define the same assignment of closed sets, say $(\mathcal
F, (N_{0}, E''_0))$, (see \ref{rkccd}).  Fix $(\mathcal F, (N_{0},
E''_0))$, and the notation as in Definition \ref{afcs}, and set
\begin{equation}\label{a}
\begin{array}{cccccccc}
 F_0 & & F_1 & &  & & F_{r}\\
(N_0,E''_0)&\longleftarrow & (N_1,E''_1)& \longleftarrow &\dots &\longleftarrow &(N_{r},E''_{r})\\
\end{array}
\end{equation}
a sequence of blow-ups and smooth morphisms for which closed sets are assigned.
This induces
\begin{equation}\label{b}
\begin{array}{cccccccc}
 F_0 & & F_1 & &  & & F_{r}\\
(W_0,E_0)&\longleftarrow & (W_1,E_1)& \longleftarrow &\dots &\longleftarrow &(W_{r},E_{r})
\end{array}
\end{equation}
(same $F_i$), where 
\begin{enumerate}
\item $W_i$ is a (closed) smooth subscheme in $N_i$.

\item $F_i=\Sing(J_i,b)$.
\end{enumerate}

So (\ref{a}) induces (\ref{b}), and (\ref{b}) is a local sequence of the basic object $\B_1=(W_{0}, (J_0,b) , E_0)$. Thus, if $\B_1$ and $\mathcal B_2$ are weakly equivalent
they both define the same $(\mathcal F, (N_{0}, E''_0))$.

Secondly, we claim that the converse holds, namely that $\mathcal B_1$ and
$\mathcal B_2$ must be weakly equivalent if they both define $(\mathcal
F, (N_{0}, E''_0))$. This converse will be essential for some inductive arguments that we will be used later. Recall that basic objects are to be considered up to weak equivalence.

The difficulty in proving this converse, addressed in Proposition \ref{pr382}, is that it is not clear that any local sequence of $\B_1=(W_{0}, (J_0,b) , E_0)$ will arise in this way (from a local sequence
over $(N_0,E''_0)$).  In fact, given an immersion $W_i\subset N_i$,
and a smooth morphism $W_i\longleftarrow W_{i+1}$, it is not clear that
there will be a smooth morphism $N_i\longleftarrow N_{i+1}$ inducing the
latter.  For this reason it could be expected that two basic
objects, say $(W_{0}, (J_0,b), E_0) $ and $(W_{0}, (K_0,d), E_0)$, define the same $(\mathcal F, (N_{0}, E''_0))$ without being weakly equivalent.  The following Proposition \ref{pr382} settles this point. Moreover, it says that the ambient space $N_0$, in which $W_0$ is
included, can be neglected for the purpose of resolution.
\end{parrafo}

\begin{proposition} \label{pr382}
Fix $(N_{0}, E''_0)$ and $ (W_{0}, E_0)$ as above, so ${E}_0=E''_0\pitchfork W_0$ as in (\ref{31110}). 
\begin{enumerate}
\item Two basic objects $\B_1=(W_{0}, (J_0,b) , E_0)$ and $\B_2=(W_{0}, (K_0,d), E_0) $, define the same assignment of closed sets over $(N_{0},E''_0)$ if and only if they are weakly equivalent.

\item {\bf  Neglecting the ambient space:} A resolution of the basic object $(W_{0}, (J_0,b) , E_0)$
defines a resolution of $(\mathcal F, (N_{0}, E''_0))$ (see Definition \ref{racs}).
\end{enumerate}
\end{proposition}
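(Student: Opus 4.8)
The plan is to reduce both statements to the definition of weak equivalence (Definition \ref{weq}) by showing that the two assignments of closed sets — the one over $(W_0,E_0)$ produced by $\B_i$, and the one over $(N_0,E''_0)$ described in \ref{rkccd}–\ref{38} — carry exactly the same information. One direction is already essentially done in \ref{38}: if $\B_1$ and $\B_2$ are weakly equivalent they define the same family of closed sets for every local sequence of $(W_0,E_0)$, and since every sequence of blow-ups and smooth morphisms over $(N_0,E''_0)$ (for which closed sets are assigned) induces, by intersecting with the strict/pulled-back copy of $W_0$, a local sequence over $(W_0,E_0)$ with the same closed sets $F_i=\Sing(J_i,b)$, it follows that $\B_1$ and $\B_2$ assign the same $(\mathcal F,(N_0,E''_0))$. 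So for part (1) the real content is the converse: assume $\B_1$ and $\B_2$ define the same $(\mathcal F,(N_0,E''_0))$ and deduce weak equivalence over $(W_0,E_0)$.

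For that converse I would argue that, modulo the reduction recalled in \ref{agre}, it suffices to match closed sets only for local sequences over $(W_0,E_0)$ in which every smooth morphism is either an open immersion or a projection $\A^n_k\times W_i \to W_i$. The key observation is that \emph{these} particular smooth morphisms do lift to the ambient space: an open immersion $U\subset W_i$ extends to an open immersion of a suitable open of $N_i$ (shrink $N_i$), and a projection $\A^n_k\times W_i \to W_i$ is induced by the projection $\A^n_k\times N_i \to N_i$, under which the copy of $\A^n_k\times W_i$ is transversal to the (pulled-back) hypersurfaces $E''$. Blow-ups with center $Y_i\subset\Sing(J_i,b)\subset W_i$ already lift to $N_i$ by the transversality hypothesis $E_0=E''_0\pitchfork W_0$, which is preserved under the transformations (as noted in \ref{rkccd} and \ref{paso1}). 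Hence every local sequence of $\B_1$ of the restricted type in \ref{agre} arises from a sequence over $(N_0,E''_0)$ of the kind for which closed sets are assigned, with the same $F_i$; since $\B_1$ and $\B_2$ give the same $(\mathcal F,(N_0,E''_0))$, the induced closed sets over $(W_0,E_0)$ agree, so $\B_1$ and $\B_2$ are weakly equivalent. If one does not wish to invoke \ref{agre}, the same conclusion is reached through Lemma \ref{lem415} (referenced in \ref{38}); I would mention both routes.

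Part (2) is then a formal consequence. A resolution of $\B_0=(W_0,(J_0,b),E_0)$ is a sequence of blow-ups with $Y_i\subset\Sing(J_i,b)$ and $\Sing(J_r,b)=\emptyset$; by the transversality condition each such blow-up lifts to a blow-up over $(N_0,E''_0)$, producing a sequence of monoidal transformations to which $(\mathcal F,(N_0,E''_0))$ assigns exactly the closed sets $F_i=\Sing(J_i,b)$ (by construction of the assignment in \ref{rkccd}), so $F_r=\Sing(J_r,b)=\emptyset$; this is a resolution of $(\mathcal F,(N_0,E''_0))$ in the sense of Definition \ref{racs}. The main obstacle is the converse in part (1): the honest difficulty, as the text emphasizes, is that an arbitrary smooth morphism $W_i\longleftarrow W_{i+1}$ need not come from one of $N_i$, and the whole point is that this failure is harmless because weak equivalence can be tested on the restricted class of smooth morphisms that \emph{do} lift. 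Everything else is bookkeeping with transversality and with the transformation rules of \ref{reform} and \ref{paso1}.
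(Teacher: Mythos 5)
Your proposal is correct but takes a genuinely different route from the paper's own proof. The paper proves part (1) by fixing a point $x_r$ where $\Sing(J_r,b)\neq\Sing(K_r,d)$, restricting to \'etale neighborhoods of the images $x_i$, and then invoking Lemma \ref{lem415} --- which, via the local structure of smooth morphisms and the existence of \'etale-local retractions $Z_2\to Z_1$ for a closed immersion of smooth schemes, lifts an arbitrary smooth morphism $W_i\longleftarrow W_{i+1}$ to one over $N_i$ after an \'etale shrink. You instead invoke the reduction recalled in \ref{agre}: weak equivalence can be tested on local sequences whose smooth morphisms are only open immersions and projections $\A^n_k\times W_i\to W_i$, and both of these lift tautologically to the ambient scheme (an open $U\subset W_i$ comes from the open $N_i\setminus(W_i\setminus U)$, and a projection $\A^n_k\times W_i\to W_i$ comes from $\A^n_k\times N_i\to N_i$, with transversality to $E''$ preserved under pull-back). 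The paper itself acknowledges precisely this alternative in \ref{38} (``if the reader is willing to accept the statement therein, then Prop \ref{pr382}, below, is easy to check, and Lemma \ref{lem415} is avoidable''), but opts for the Lemma so as to stay self-contained and not lean on the external result from \cite{BGV}. Your route is shorter and more transparent, at the cost of taking \ref{agre} as a black box; the paper's route is longer but internal. You correctly identify \ref{rkccd} as giving the ``easy'' direction of (1) and all of (2), and you correctly flag that the entire difficulty lies in lifting smooth morphisms --- so your decomposition of the argument matches the paper's, and only the mechanism for the crucial lifting step differs.
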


\begin{proof} 

Suppose  $\B_1$ and  $\B_2$ are not weakly equivalent. There must be an index $r\geq 0$, and a common local sequence  for both, say
\begin{equation}\label{lranpp}
\begin{array}{cccccccc}
 (J_0,d) & & (J_1,d) &  & &  &(J_r,d)\\
(W_{0}, E_0) &  \overset{\gamma_0}{\longleftarrow} & (W'_{1}, E_1) &   \overset{\gamma_1}{\longleftarrow}  & \dots
& \overset{\gamma_{r-1}}{\longleftarrow}  &
(W'_{r}, E_r)
\end{array}
\end{equation}
and 
\begin{equation}\label{lran1qp}
\begin{array}{cccccccc}
 (K_0,d) & & (K_1,d) &  & &  &(K_r,d)\\
(W_{0}, E_0) &  \overset{\gamma_0}{\longleftarrow} & (W'_{1}, E_1) &  \overset{\gamma_1}{\longleftarrow} &\dots 
& \overset{\gamma_{r-1}}{\longleftarrow}  &
(W'_{r}, E_r)
\end{array}
\end{equation}
so that $\Sing(J_r,b)\neq \Sing(K_r,d)$ in $W'_r$. (Take $r=0$ if 
$\Sing(J_0,b)\neq \Sing(K_0,d)$).

If all $\gamma_i$ are blow-ups, then the discussion in \ref{rkccd} says that this sequence can be lifted 
to a sequence over $(N_0,E''_0)$, and this is a contradiction as we assume that both $\B_1$ and  $\B_2$ define the same assignment over $(N_0,E''_0)$. The problem arises if, for some index $i$, $\gamma_i$ is smooth.

Fix a point $x_r\in W'_r$ so that $\Sing(J_r,b)\neq \Sing(K_r,d)$  locally at $x_r$. Let $x_i\in W'_i$ denote the image of $x_r$ for each index $0\leq i \leq r-1$. There is no harm in modifying this sequence by restrictions to neighborhoods of $x_i$. If one could lift this restricted sequence to a sequence over $(N_0,E''_0)$, this would also lead to a contradiction, as $\Sing(J_r,b)\neq \Sing(K_r,d)$  locally at $x_r$.

The following Lemma ensures that such restriction can be defined, by replacing $W_i$ by an \'etale neighborhood of $x_i$ (say $W_i$ again), so that it can be lifted 
to a sequence over $(N_0,E''_0)$. This also leads to a contradiction as 
$\Sing(J_r,b)\neq \Sing(K_r,d)$  at an \'etale neighborhood of $x_r$. It suffices to treat the case in which $\Sing(J_i,b)= \Sing(K_i,d)$ locally at $x_i$, for each index $0\leq i \leq r-1$. 

Part (2) follows from \ref{rkccd}.
\end{proof}

\begin{lemma}\label{lem415} Fix a closed immersion of smooth schemes $W\subset N$, a smooth morphism 
$ W \overset{\gamma_{}}{\longleftarrow} W_1$ and a closed point $x_1\in W_1$. After restriction to a suitable \'etale neighborhood of $x_1\in W_1$, we may assume that there is a smooth 
morphism $ N \overset{\Lambda}{\longleftarrow} N_1$, so that 
$W_1=\Lambda^{-1} (W)(\subset N_1)$, and $\gamma$ is defined by restriction of 
$\Lambda$ to $W_1$.

\end{lemma}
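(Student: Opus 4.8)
The plan is to reduce, first, to the case where the smooth morphism is a projection from a relative affine space, and then to a purely algebraic lifting of an \'etale morphism across a closed immersion. First I would use the local structure of smooth morphisms: after replacing $W_1$ by a Zariski-open neighbourhood of $x_1$ (in particular an \'etale neighbourhood), the morphism $\gamma$ factors as
\[
W_1 \xrightarrow{\ \phi\ } \A^e_W \xrightarrow{\ \mathrm{pr}\ } W,
\]
where $e$ is the relative dimension of $\gamma$ at $x_1$, $\phi$ is \'etale, and $\mathrm{pr}$ is the projection. Since $\A^e_W=\A^e_N\times_N W$, the inclusion $\A^e_W\subset\A^e_N$ is a closed immersion and $\A^e_N\to N$ is smooth; so it suffices to produce an \'etale morphism $\Psi\colon N_1\to \A^e_N$ with $\Psi^{-1}(\A^e_W)=W_1$ and $\Psi|_{W_1}=\phi$, and then to set $\Lambda=\mathrm{pr}\circ\Psi$. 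Indeed $\Lambda$ is then smooth, $\Lambda^{-1}(W)=\Psi^{-1}(\mathrm{pr}^{-1}(W))=\Psi^{-1}(\A^e_W)=W_1$, and $\Lambda|_{W_1}=\mathrm{pr}|_{\A^e_W}\circ\phi=\gamma$, as required.

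So the heart of the matter is the following local statement: given a closed immersion $Z\hookrightarrow Y$, an \'etale morphism $\phi\colon Z'\to Z$, and a point $z'\in Z'$, after shrinking $Z'$ Zariski-locally around $z'$ there is an \'etale $\Psi\colon Y'\to Y$ with $Y'\times_Y Z=Z'$ (scheme-theoretically) and $\Psi|_{Z'}=\phi$. To prove this I would invoke the standard-\'etale structure theorem for \'etale morphisms: near $z'$ one may write $Z'=\Spec\big((B[T]/(f))_g\big)$, where $\Spec B$ is an affine open of $Z$ containing the image of $z'$, $f\in B[T]$ is monic, and $f'=\partial f/\partial T$ is a unit in $(B[T]/(f))_g$. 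Now choose an affine open $\Spec A$ of $Y$ with $\Spec A\cap Z=\Spec B$, i.e.\ $B=A/I$ for the ideal $I$ of $Z$ in $\Spec A$; lift $f$ to a monic polynomial $F\in A[T]$ and lift $g$ to $\bar g\in A[T]/(F)$, and set
\[
Y'=\Spec\big((A[T]/(F))_{\bar g\, F'}\big).
\]
Since $F$ is monic and $F'$ has been inverted, $Y'\to\Spec A\subset Y$ is a standard \'etale morphism. Reducing modulo $I$ (which commutes with $A[T]/(F)$ because $F$ is monic, hence the extension is finite free) gives $Y'\times_Y Z=\Spec\big((B[T]/(f))_{g\,f'}\big)=\Spec\big((B[T]/(f))_g\big)=Z'$, the last equality because $f'$ is already a unit there; this identification is compatible with the maps to $Z$, so $\Psi|_{Z'}=\phi$, and $Y'\times_Y Z$ is closed in $Y'$ since $Z$ is closed in $Y$.

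Assembling the two steps with $Z=\A^e_W$, $Y=\A^e_N$, $Z'=W_1$, $z'=x_1$ then yields $\Psi$, and hence $\Lambda=\mathrm{pr}\circ\Psi\colon N_1\to N$ with all the properties in the statement. I expect the main obstacle to be exactly the local lifting statement of the second paragraph: one must control the scheme-theoretic fibre $\Psi^{-1}(W)$ \emph{on the nose}, not merely set-theoretically, and this is what forces the monic-polynomial presentation (so that passing to $A/I$ commutes with forming $A[T]/(F)$), together with the remark that $f'$ is automatically a unit on $Z'$ (so that inverting $F'$ does not shrink $Z'$ further). The remaining reductions --- the local factorization of a smooth morphism as \'etale over $\A^e$, and the choice of compatible affine charts for $Z\subset Y$ --- are routine and only Zariski-local on $W_1$, so the ``\'etale neighbourhood'' in the statement can in fact be taken to be a Zariski-open neighbourhood of $x_1$.
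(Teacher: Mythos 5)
Your proof is correct and shares the first step with the paper's argument: both factor $\gamma$ locally through the projection $\mathbb{A}^e_W\to W$ using the local structure of smooth morphisms, and both thereby reduce to extending an \'etale morphism across the closed immersion $\mathbb{A}^e_W\subset\mathbb{A}^e_N$. Where you diverge is in how the extension step is proved. The paper invokes the existence, after further \'etale restriction of $N\times\mathbb{A}^e$, of a retraction of $N\times\mathbb{A}^e$ onto $W\times\mathbb{A}^e$ (a fact that uses smoothness of both schemes over a perfect field), and then simply takes the fibre product of that retraction with the \'etale map $W_1\to W\times\mathbb{A}^e$. You instead put the \'etale map into standard form $\Spec\bigl((B[T]/(f))_g\bigr)$ with $f$ monic and $f'$ invertible, lift $f$ and $g$ to the ambient ring $A$, and localize at $F'$; the monicity controls the scheme-theoretic fibre over $Z$, and the invertibility of $f'$ on $Z'$ shows that inverting $F'$ does not shrink it. Your route is both more elementary and more general (it applies to an arbitrary closed immersion, with no smoothness or perfectness hypotheses), and it is purely Zariski-local on $W_1$, so it in fact strengthens the lemma from ``\'etale neighbourhood'' to ``Zariski-open neighbourhood'' of $x_1$. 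One small adjustment: choose the affine chart $\Spec A\subset Y$ first and put $\Spec B=\Spec A\cap Z$, rather than fixing $\Spec B$ and seeking a compatible $\Spec A$, since an affine open of a closed subscheme need not be the trace of an affine open of the ambient; after that, any further principal-open shrinking of $\Spec B$ lifts to a principal open of $\Spec A$ and the argument goes through exactly as you wrote it.
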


\begin{proof}
Recall the characterization of smooth morphisms. In this case, if $n$ denotes the dimension of the fiber of $ W \overset{\gamma_{}}{\longleftarrow} W_1$ locally at $x_1$, then $(W_1,x_1)$ is an \'etale 
neighborhood of $(x,\mathbb{O})$ in $W \times \mathbb{ A}^n$
($x=\gamma(x_1)$). There is a natural lifting of the inclusion $W\subset N$ to 
$W \times \mathbb{ A}^n\subset N \times \mathbb{ A}^n$. 

An \'etale neighborhood of  $  N \times \mathbb{ A}^n $ at $(x,\mathbb{O})$ induces, by taking the pull-back of the inclusion, an \'etale neighborhood of  $  W \times \mathbb{ A}^n $ at $(x,\mathbb{O})$.
It suffices to show that this induced neighborhood can be defined so as to dominate $(W_1,x_1)$. In other words, fix an inclusion 
of smooth schemes, say $Z_1\subset Z_2$, a closed point $x\in Z_1$, and an \'etale neighborhood $(Z'_1, x')$ of $(Z_1, x)$. It remains to prove that there is a \'etale neighborhood $(Z_2, x)$, which induces on the subscheme $Z_1$ an \'etale neighborhood that dominates $(Z'_1, x')$.  To prove this last claim make use of a well known property of \'etale topology for schemes over perfect fields: given smooth scheme and a closed immersion, as is the case for $Z_1\subset Z_2$ (locally at $x$), after restriction to a suitable  \'etale neighborhood of  $(Z_2, x)$, there is a retraction of $Z_2$
on the restriction of $Z_1$ (see, e.g. \cite{BV1}). Note that if $Z_1\subset Z_2$ admits a retraction, say $Z_2\to Z_1$, then the fiber product with an \'etale morphism $Z'_1\to Z_1$ define a scheme with an \'etale morphism over $Z_2$ that fulfills the required condition.
%
%he fact that there is a retraction of $(Z_2, x)$ to $(Z_1, x)$, at a suitable  \'etale neighborhood 
%of $(Z_2, x)$.
%\begin{equation*}
%\xymatrix@C=3pc{
%Z_2 & Z_2''\ar[l]_{\hbox{\scriptsize\'etale}} & Z_2'''=Z_2''\times_{Z_1''}Z_1'''\ar[l]_{\!\!\!\!\!\!\!\!\!\!\!\!\!\!\!\!\!\!\!\!\!\hbox{\scriptsize\'etale}}\ar[ldd]\\
%Z_1\ar@{^{(}->}[u]^{\hbox{\scriptsize closed}} & Z_1''\ar@{^{(}->}[u]^{\hbox{\scriptsize closed}}\ar[l]_{\hbox{\scriptsize\'etale}}\\
%Z_1'\ar[u]^{\hbox{\scriptsize\'etale}} & Z_1'''\ar[u]^{\hbox{\scriptsize\'etale}} \ar[l]_{\hbox{\scriptsize\'etale}}  
%&
%Z_1''''=Z_2'''\times_{Z_2''}Z_1'''=Z_2'''\times_{Z_2}Z_1' \ar[l]_{\!\!\!\!\!\!\!\!\!\!\!\!\!\!\!\!\!\!\!\!\!\!\!\!\!\!\!\!\!\!\!\!\!\!\!\!\!\!\!\!\!\!\!\!\!\hbox{\scriptsize\'etale}}\ar[uu] 
%}\end{equation*}
\end{proof}

\begin{parrafo}\label{rk399}
{\bf Functions  on embedded basic objects (non-tame case).}

Set $(N_{0}, E'_0) $, and consider, as in \ref{ods},
the assignment of closed sets on $(N_{0}, E'_0) $ defined by a closed immersion $W_0 \subset N_0$, a partition $E'_0=(E_0')^+\cup (E_0')^-$, and a basic object  $\B_0=(W_{0}, (J_0,b), E_0)$ where $E_0$ is defined by restriction to $W_0$ of hypersurfaces in $(E_0')^+$ (namely, ${E}_0=(E_0')^+\pitchfork W_0$ as in (\ref{31110})).
 Let $( \mathcal F, (N_{0}, E'_0))$ be the assignment defined in this way.

\vspace{0.2cm}

\noindent{\bf Assumption 1:} Suppose that $( \mathcal F, (N_{0}, E'_0))$ assigns closed sets to
the local sequence
\begin{equation}\label{4rvat}
\begin{array}{cccccccc}
(N_{0}, E'_0) & \longleftarrow & (N_{1}, E'_1) & \longleftarrow &
\cdots  &\longleftarrow &
(N_{k}, E'_k).
\end{array}
\end{equation}
In such a case this sequence induces a local sequence over $\B_0$, say
\begin{equation}\label{54ant}
\begin{array}{cccccccc}
 (J_0,b) & & (J_1,b) &  & &  &(J_k,b)\\
(W_{0}, E_0) & \longleftarrow & (W_{1}, E_1) & \longleftarrow &
\cdots  &\longleftarrow &
(W_{k}, E_k),
\end{array}
\end{equation}
where 
 \begin{equation}\label{88Lis}
\begin{array}{ccc}
 (J_i,b) & & (J_{i+1},b) \\
(W_i , E_i) & \longleftarrow & (W_{i+1} ,E_{i+1})
\end{array}
\end{equation}
is obtained by a pull-back if $(N_{i}, E'_i) \longleftarrow (N_{i+1},
E'_{i+1}) $ is given by a smooth morphism, or (\ref{88Lis}) is a
blow-up with a  center $Y_i$ with normal crossings with
$E'_i$ in $N_i$ (in particular with $E_i$ in $W_i$); and of course the closed sets assigned to (\ref{4rvat}) are $F_i=\Sing(J_i,b)$, $i=0, \dots ,k$.

\noindent{\bf Assumption 2:} Suppose that $Y_i \subset \Max\word_i$ for
each index $i$ for which $(N_{i}, E'_i) \longleftarrow (N_{i+1},
E'_{i+1}) $ is a blow-up.
According to (\ref{Ineqword}), this second assumption ensures that
$$\max\word_{0}\geq\cdots\geq\max\word_{k}. $$
\end{parrafo}

\begin{definition} \label{Deftem}
Set $
E'_{k}=(E')_{k}^{+}\sqcup (E')_{k}^{-} $, where $ (E')_{k}^{-} $ consists of all hypersurfaces of
$ E'_{k} $ which are strict transforms of hypersurfaces of $
(E'_{{0}})^- $. Set $d_k=\dim W_k$ and define
$$ t{(em)}^{(d_k)}_{k}:\Sing(J_{k},b)\longrightarrow
(\mathbb{Q}\times\mathbb{Z},\leq) \ \mbox{(lexicographic order)}$$
$$ t{(em)}^{(d_k)}_{k}(\xi)=(\word_{k}(\xi),n{(em)}_{k}(\xi)), $$ 
$$ n{(em)}_{k}(\xi)=
\#\{H\in (E')_{k}^{-}\mid \xi\in H\} 
 $$
for $\xi \in \Sing(J_k,b)$. In the same way we define functions $ t{(em)}^{(d_{k-1})}_{k-1},t{(em)}^{(d_{k-2})}_{k-2},\ldots,t{(em)}^{(d_0)}_{{0}} $.
\end{definition}

\begin{corollary}\label{cor225}
Let $(\mathcal F_{\mathcal B}, (N_0,E'_0))$ be defined by $\B_0=(W_{0},(J_{0},b),E_{0})$, and by $(N_0, E'_0)$ with a partition $E'_0=(E_0')^+\cup (E_0')^-$, as
in \ref{rk399}.  Consider all local sequences 
 \begin{equation} 
\xymatrix@C=1.5pc@R=0pc{
F_0= \Sing(J_0,b)  &F_1=\Sing (J_1,b) &  &F_r=\Sing(J_r,b)\\
(N_{0}, E'_0)  & (N_{1}, E'_1)\ar[l]  &\cdots\ar[l]  &(N_{r}, E'_r)\ar[l]\\ 
}\end{equation}
with $ Y_{i}\subset\Max\word_{i} $ whenever $W_i
\longleftarrow W_{i+1} $ is a blow-up with
center $Y_i$; and the functions 
$t(em)_i: F_i \longrightarrow \mathbb Q\times \mathbb{Z} \ \ 0 \leq i \leq r.$
Then:
\begin{enumerate}
\item This defines an assignment of closed sets and functions which are
non-increasing (see Definition \ref{ddusc}). Namely, 
if $ Y_{i}\subset\Max t(em)_{i} (\subset \Max\word_{i}) $, then
$ t(em)_{i-1}(\pi_i(x)) \geq t(em)_i(x)$ for any $x\in\Sing(J_i,b)$.

\item This assignment depends only on the weak equivalence class of
$\B_0=(W_{0},(J_{0},b),E_{0})$.
\end{enumerate}
In particular, it follows that  $$\max t(em)_{0}\geq\cdots\geq\max t(em)_{r},$$ 
if 
$ Y_{i}\subset\Max t(em)_{i}$ for every index $i$ for which $N_i \leftarrow N_{i+1}$ is defined with center $Y_i$.

\end{corollary}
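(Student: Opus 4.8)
The plan is to deduce everything from two facts already in place: that $\word$ is a satellite of Hironaka's order $\ord$ --- hence an assignment of closed sets and functions (Definition~\ref{Defword}) which is non-increasing on sequences whose blow-up centers lie in $\Max\word$ (Remark~\ref{rk460}) and which depends only on the weak equivalence class of $\B_0$ (Remark~\ref{satf}) --- and that the counting function $n(em)$ is a purely combinatorial invariant of the ambient sequence, insensitive to the ideal. So the proof is really an assembly, following the same pattern as the unlabelled corollary after Remark~\ref{rk460} in the non-embedded case.

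First I would record that $t(em)_i=(\word_i,n(em)_i)$ is an assignment of closed sets and functions over $(\mathcal F_{\B_0},(N_0,E'_0))$ in the sense of Definition~\ref{def229}. By \ref{rk399}, every sequence over $(N_0,E'_0)$ to which closed sets are assigned induces a local sequence of the basic object over the successive $(W_i,E_i)$, with $F_i=\Sing(J_i,b)$; so $\word_i$ is already an assignment of functions here, and it remains to check axioms (A) and (B) of Definition~\ref{def229} for $n(em)_i$. For this one uses that, by Definition~\ref{Deftem} together with the propagation rule in \ref{ods}, the set $(E')^-_i$ consists exactly of the strict transforms of the hypersurfaces of $(E')^-_{i-1}$ --- the newly created exceptional hypersurface is placed in $(E')^+$ --- so away from the exceptional locus a blow-up is a local isomorphism and preserves the count of such hypersurfaces through a point, while under a smooth morphism that count is preserved by pull-back.

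Next, for the non-increasing property in part (1), fix a blow-up $(W_{i-1},E_{i-1})\longleftarrow(W_i,E_i)$ occurring in such a sequence with center $Y_{i-1}\subset\Max t(em)_{i-1}$. Since $t(em)$ is ordered lexicographically with $\word$ in the first coordinate, $\Max t(em)_{i-1}\subset\Max\word_{i-1}$, so $Y_{i-1}\subset\Max\word_{i-1}$ and Remark~\ref{rk460}, inequality~(\ref{IneqwordPt}), gives $\word_{i-1}(\pi(x))\geq\word_i(x)$ for every $x\in\Sing(J_i,b)$, where $\pi$ denotes the blow-up map. Each hypersurface of $(E')^-_i$ through $x$ is the strict transform of a hypersurface of $(E')^-_{i-1}$, and the latter passes through $\pi(x)$ because strict transforms map into their originals; distinct strict transforms come from distinct originals, so $n(em)_{i-1}(\pi(x))\geq n(em)_i(x)$. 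Both coordinates being non-increasing along $\pi$, the inequality $t(em)_{i-1}(\pi(x))\geq t(em)_i(x)$ in the lexicographic order follows, and for a transformation of type (B) it is an equality by the previous paragraph. The chain $\max t(em)_0\geq\cdots\geq\max t(em)_r$ is then Remark~\ref{kkr} applied to this non-increasing assignment.

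Finally, for part (2), let $\B_0=(W_0,(J_0,b),E_0)$ and $\B'_0=(W_0,(K_0,d),E_0)$ be weakly equivalent. Any sequence over $(N_0,E'_0)$ to which $(\mathcal F_{\B_0},(N_0,E'_0))$ assigns closed sets restricts, as noted, to a genuine local sequence of $\B_0$ as a basic object over the $(W_i,E_i)$ (blow-ups with centers in $\Sing(J_i,b)$ having normal crossings with $E_i$, and pull-backs by smooth morphisms), so by Definition~\ref{weq} it is also in the domain of $(\mathcal F_{\B'_0},(N_0,E'_0))$ with $\Sing(J_i,b)=\Sing(K_i,d)$ for all $i$; hence the two embedded assignments of closed sets coincide. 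On these, $\word_i$ agrees for $\B_0$ and $\B'_0$ by Remark~\ref{satf}, and $n(em)_i$ is computed from the partition of $E'_i$ and the combinatorics of the sequence alone, with no reference to the ideal, so $t(em)_i$ is the same function in both cases. The point requiring care is precisely this transfer of weak equivalence into the embedded picture: it does not need the lifting of smooth morphisms from $W_i$ to $N_i$ that made Proposition~\ref{pr382} delicate, since here we only use the easy direction --- an allowed sequence over $(N_0,E'_0)$ always descends to a local sequence of the basic object --- so Definition~\ref{weq} applies verbatim.
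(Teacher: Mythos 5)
The paper states this corollary without an explicit proof, treating it as a routine companion of the unlabelled corollary following Remark \ref{rk460} for $\word$; your reconstruction is correct and assembles exactly the ingredients the paper has already set in place, namely Remark \ref{rk460} (inequality (\ref{IneqwordPt})) for the first coordinate, the observation that $(E')^-_i$ consists precisely of strict transforms so the count $n(em)$ cannot increase along $\pi$ and is preserved by pull-backs, and Remark \ref{satf} together with Definition \ref{weq} for independence of the weak equivalence class. Your closing remark is also on the mark: only the easy direction (an allowed sequence over $(N_0,E'_0)$ descends to a local sequence over $(W_0,E_0)$) is needed, so the delicate lifting of smooth morphisms from $W$ to $N$ addressed in Proposition \ref{pr382} and Lemma \ref{lem415} plays no role here.
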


\begin{remark}\label{rmk417}
A similar statement as that in Corollary \ref{cor225} holds for the functions 
$t_i$ in  \ref{Deft}, defined for a basic object 
$B_0 = (W_0 , (J_0 , b), E_0 )$, and setting $(\mathcal F_{\mathcal B_0}, (W_0,E_0))$ as in Example \ref{ejdebo}, and a sequence (\ref{lant2}). Note that the functions $t$ are defined in terms of the functions order, and that
\begin{equation} \label{Ineqword_1}
    \max t_{0}\geq\cdots\geq\max t_{r}
\end{equation}
holds for a sequence satisfying the conditions of Definition \ref{ddusc} (see Remark \ref{kkr}).

\end{remark}
		
\begin{parrafo}\label{sehl}
Our Handy Lemma \ref{Handy} and Corollary \ref{cor225}, (2), say that one can attach to the maximum value
$\max t(em)_0$ an assignment of closed sets.  Say
$ (\mathcal F_0(\max t(em)_0), (N_0, E'_0)).$

Similarly, the Handy Lemma and \ref{rmk417} also say that one can attach to the maximum value
$\max t_0$ an assignment of closed sets.  Say
$ (\mathcal F_{B_0}(\max t_0), (W_0, E_0)).$

 Let us emphasize here that $ \max t(em)=(p,q)\in \mathbb Q\times \mathbb N$ can have first coordinate $p=0$ (as opposed to 
$ \max t=(p,q)$, defined with $p>0$ in \ref{Deft}).

\end{parrafo}

\begin{remark}\label{rmk42}
 \textbf{On the inductive nature of the function $ t^{(d)} $.}
	
The inductive functions $ t^{(d)} $ were defined in Definition \ref{Deft} for basic objects. Theorem \ref{314} will show how these functions lead 
to a unique resolution by induction on the dimension $d$.

One can check that a basic object $\B_0=(W_{0},(J_{0},b),E_{0})$ is, in particular,
a tame general basic object by setting $N_0=W_0$ and 
$E'_0=E_0$ in Definition \ref{ebgbo}, B).
We shall ultimately show, in Section 5, that the inductive functions $ t^{(d)} $ can also be defined in the context of tame general basic objects. Moreover, the form of induction on $d$, given in
Theorem \ref{314}, is defined in terms of general basic objects (of smaller dimension) which are also tame. 

This will ultimately lead to Theorem \ref{316}, which ensure that 
the functions $ t^{(d)} $ define, by induction, a unique resolution for
general basic objects which are tame. In particular they define resolutions for basic objects, and here we discuss why these functions lead to this particular strategy of resolution, starting with a basic object.
Fix $\B_0=(W_{0},(J_{0},b),E_{0})$ and a sequence of blow-ups
\begin{equation}\label{lamm}
\xymatrix@C=2.7pc@R=0pc{
(J_0,b) &  (J_1,b) &  &(J_r,b)\\
(W_{0}, E_0) & (W_{1}, E_1)\ar[l]_{\ \ \ \pi_{Y_{0}}} &
\cdots\ar[l]_{\ \ \ \ \ \ \ \pi_{Y_{1}}}  &
(W_{r}, E_r)\ar[l]_{\!\!\!\!\pi_{Y_{r-1}}}
}
\end{equation}
 with centers $Y_i \subset
\Max\word_i$ for all index $i$. So
$\max\word_{0}\geq\cdots\geq\max\word_{r}$ (see (\ref{Ineqword})).

As was indicated in Remark \ref{sfrobo}, in order to achieve a resolution of $\B_0$ it suffices to construct the sequence so that 
$\max\word_{r} =0$. So if $\max\word_{r} =0$ we are done. Suppose in what follows that $\max\word_{r} \neq 0$.
Let us indicate why a sequence with this condition can be constructed in terms of the functions 
$ t^{(d)}_{r}:\Sing(J_{r},b)\longrightarrow
(\mathbb{Q}\times\mathbb{Z},\leq) \ \mbox{(lexicographic order)}$
($d=\dim W_0$).
 Recall that 
$t^{(d)}_{r}$ is defined under the assumption that $\max\word_{r} \neq0$, and the second coordinate is bounded by the dimension $d$.
In particular $\max t_r=(p,q)$, $p\neq 0$, $p \in \frac{1}{b}\mathbb N$,
and $q$ is bounded by $d$.

\noindent{\bf 1) Canonical choice of centers}.  Note that $ \Max{t^{(d)}_{r}} $
is a closed set in $W_r$.  Let $R(1)(\Max{t^{(d)}_{r}})$ denote the union of
components of the closed set $ \Max{t^{(d)}_{r}} $ of codimension one (in 
$W_{r}$).
Then $R(1)(\Max{t^{(d)}_{r}})$ is both open and closed in $ \Max{t^{(d)}_{r}} $. After open restriction we may assume that $R(1)(\Max{t_{r}})=\Max{t_{r}}$.
The property is that $R(1)(\Max{t_{r}})$ is a smooth permissible
center, and setting $Y_r=R(1)(\Max{t_{r}})$, we get
\begin{equation}\label{ant2_1}
\begin{array}{cccccccccc}
 (J_0,b) & & (J_1,b) & & & & (J_r,b)  &  & (J_{r+1},b)&\\
(W_{0}, E_0) & \overset{\pi_{Y_0}}{\longleftarrow} & (W_{1}, E_1) & \overset{\pi_{Y_1}}{\longleftarrow} & \dots & \overset{\pi_{Y_{r-1}}}{\longleftarrow} &
(W_{r}, E_r)   &\overset{\pi_{Y_{r}}}{\longleftarrow} &
(W_{r+1}, E_{r+1})
\end{array}
\end{equation}
and $\max t_r> \max t_{r+1}.$ This shows that $R(1)(\Max{t_{r}})$ is our canonical choice of center, and reduces the problem to the case $R(1)(\Max{t_{r}})=\emptyset$.

\vspace{0.2cm}

\noindent{\bf 2) The case $\max t^{(d)}=(p,q)$ and $R(1)(\Max{t^{(d)}_{r}})=\emptyset$}. According to \ref{sehl},
one can attach to the maximum value, say 
$\max t_r$, an assignment of closed sets; say
$ (\mathcal F_r(\max t_r), (W_r, E_r)),$ and the property is that this assignment  can be 
endowed with a structure of a {\em tame} $(d-1)$-dimensional general basic object over $(W_r, E_r^{(1)})$, for a suitable subset 
$E_r^{(1)}\subset E_r$.

\begin{theorem}\emph{(Inductive property of $t^{(d)}$.)}\label{314} Set $\mathcal B_r=(W_r,(J_r,b), E_r)$ as above and $d= \dim W_r$. If 
$\Max t_r$ has no component of dimension $d-1$ (i.e., if  $R(1)(\Max{t^{(d)}_{r}})=\emptyset$), then the assignment of closed sets $ (\mathcal F_r(\max
t_r), (W_r, E_r))$ is equivalent to a tamely 
 embedded general basic object  of dimension $d-1$, defined over $(W_r, E_r^{(1)})$, 
 in terms of a suitable basic object $(W_r,(D_r,b),  E_r^{(1)})$.
 \end{theorem}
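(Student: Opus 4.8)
The plan is to reduce the assignment $(\mathcal F_r(\max t_r),(W_r,E_r))$ to a $(d-1)$-dimensional tame embedded general basic object by passing, locally, to a hypersurface of maximal contact. First I would recall that on the closed set $\Max\word_r \subset \Sing(J_r,b)$ the function $\word_r$ equals $\max\word_r = p \neq 0$, so locally at each point $\xi$ of $\Max t_r$ we may work with the ideal $\bar J_r$ in the factorization (\ref{ExprJi}); it has order exactly $b\cdot p$ along $\Max\word_r$. The classical construction of maximal contact (Hironaka/Giraud, in characteristic zero) produces, étale-locally around such a $\xi$, a smooth hypersurface $Z \subset W_r$ of dimension $d-1$ containing $\Max\word_r$ near $\xi$, together with a basic object $(Z,(D,b),-)$ whose singular locus recovers $\Max\word_r$ locally and which is compatible with the permissible transformations and smooth pull-backs that keep $\word$ constant and equal to its maximum. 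The hypothesis $R(1)(\Max t_r^{(d)})=\emptyset$ is exactly what guarantees that no component of $\Max t_r$ is a divisor, hence that the forthcoming centers are of codimension $\geq 2$ in $W_r$ and the transformation law on $Z$ is the usual one for basic objects (a codimension-one center would force a monoidal transformation that is an isomorphism on $W_r$ but not on $Z$, breaking the correspondence).

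The key steps, in order, are: (i) fix the subset $E_r^{(1)}\subset E_r$ — this should be $E_r^+$ in the notation of Definition \ref{Deft}, i.e.\ the hypersurfaces that are \emph{not} strict transforms of components of $E_{s_0}$ — and observe that the second coordinate $n_r(\xi)$ of $t_r$, when $\word_r(\xi)=\max\word_r$, counts only hypersurfaces of $E_r^- = E_r\setminus E_r^{(1)}$; (ii) construct, over an étale cover $\{U_\lambda\}$ of $W_r$, hypersurfaces of maximal contact $Z_\lambda$ and companion basic objects $(Z_\lambda,(D_\lambda,b),(E_r^{(1)})|_{Z_\lambda})$, where the ambient pair is $(U_\lambda, (E_r^{(1)})_\lambda)$ and the partition places the restrictions of $E_r^{(1)}$ in the $+$ part and has empty $-$ part — making the resulting embedded general basic object \emph{tame} in the sense of Definition \ref{ebgbo}(B), since by construction the $-$ part is empty; (iii) check that the closed set assigned by this $(d-1)$-dimensional general basic object, for an arbitrary local sequence, coincides with $F_i(\max t_r)=\Max t_i$ (as long as $\max t$ stays constant) — this uses Corollary \ref{cor225} together with the non-increasing property of $\word$ and the fact that maximal contact is preserved under the permissible transformations and smooth morphisms in a local sequence; (iv) invoke Theorem \ref{teo23} (Hironaka) to see that the patched object is well defined independently of the choices of $Z_\lambda$, and Proposition \ref{pr382}(1) to neglect the ambient inclusion $Z_\lambda \subset U_\lambda$, so that what we obtain is genuinely equivalent, as an assignment of closed sets, to the tame embedded general basic object over $(W_r,E_r^{(1)})$ attached to some $(W_r,(D_r,b),E_r^{(1)})$.

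The main obstacle I expect is step (iii), the patching and compatibility verification: one must show that the \emph{local} hypersurfaces of maximal contact, which are far from unique, give rise to the \emph{same} assignment of closed sets, and that this assignment tracks $\Max t_r$ through all local sequences (including the smooth-morphism transformations of type B, where maximal contact behaves well, and the blow-ups along centers inside $\Max t_r$, where one needs Giraud's stability of maximal contact together with the condition that the center is nowhere dense of codimension $\geq 2$). The normal-crossings bookkeeping — verifying that $(E_r^{(1)})|_{Z_\lambda}$ has normal crossings on $Z_\lambda$ and transforms correctly, and that the new exceptional divisors enter the $+$ part so tameness is maintained — is routine but must be done carefully. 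Once these compatibilities are in place, Theorem \ref{teo23} and the Handy Lemma \ref{Handy} do the rest, and the statement follows.
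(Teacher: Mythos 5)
Your overall strategy — restrict over an \'etale cover to hypersurfaces of maximal contact, patch via Theorem \ref{teo23} and Proposition \ref{pr382}, and take the $+$-part $E_r^{(1)}=E_r^+$ as the boundary on the receiving side — is the same as the paper's, which itself only gives a sketch and defers to Lemma 6.12 of \cite{EncVil97:Tirol}. You also identify $E_r^{(1)}$ correctly. However, there is a genuine gap in step (ii): the maximal-contact hypersurface $Z$ and the companion basic object you build there are produced from $\bar J_r$ alone, so their singular loci recover $\Max\word_r$, not $\Max t_r$. You even say this explicitly (``whose singular locus recovers $\Max\word_r$ locally''), and then in step (iii) you nonetheless want the assigned closed sets to coincide with $\Max t_i$. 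These are different sets in general: $\Max t_r$ is the subset of $\Max\word_r$ where the hypersurface count $n_r$, taken over $E_r^-$, is also maximal. Nothing in the $\bar J_r$-maximal-contact data sees $E_r^-$, so the resulting $(d-1)$-dimensional assignment would track the wrong closed set.

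What is missing is precisely the intermediate object the paper names: a basic object $(W_r,(D_r,b),E_r^{(1)})$ on $W_r$ itself, built so that $\Sing(D_r,b)=\Max t_r$ and this equality is preserved by the transformations of interest. The construction must combine the order condition (via $\bar J_r$ and $J_r$) with the incidence condition on $E_r^-$ (via an ideal of the type $K(q)=\prod_{F\in\mathcal C(q)}\sum_{i\in F}I(H_i)$, exactly as in the proof of Theorem \ref{317em}); this is also what makes the resulting basic object ``simple'' (constant order $b$ along its singular locus), which is the hypothesis under which maximal contact is available and behaves well. It is only \emph{after} forming $(W_r,(D_r,b),E_r^{(1)})$ that one restricts to maximal contact hypersurfaces to obtain the $(d-1)$-dimensional tame general basic object. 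A secondary consequence of the same construction, which your ``normal-crossings bookkeeping'' glosses over, is the property (the analogue of P1 in the proof of Theorem \ref{317em}) that any smooth center inside $\Max t_r$ is automatically contained in every $E_r^-$-hypersurface it meets; this is what legitimately allows you to drop $E_r^-$ from the boundary and work over $(W_r,E_r^{(1)})$ in a tame way. Without the $K(q)$-type ingredient, steps (iii) and (iv) do not go through as written.

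Minor point: your explanation of why $R(1)(\Max t_r^{(d)})=\emptyset$ is required is off target; the hypothesis is there because a $(d-1)$-dimensional component of $\Max t_r$ would be open and closed in the maximal contact hypersurface $Z$, forcing the $(d-1)$-dimensional companion basic object to be degenerate there (singular locus equal to all of $Z$), and because such a component is itself a canonical permissible center that already makes $\max t_r$ drop (as in Remark \ref{rmk42}, part (1)).
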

 \proof We refer to Lemma 6.12 in \cite{EncVil97:Tirol} for details and a precise statement. First set $s_0$ as in 
 (\ref{leqqf}), and let $E_r^{(1)}$ be the set of exceptional hypersurfaces that arise by blowing up at 
 $Y_{s_0}, Y_{s_0+1}, \dots , Y_{r-1}$. We attach 
 a basic object $(W_r,(D_r,b),  E_r^{(1)} )$ (a so called {\em simple} basic object) which in turn gives rise to a $d-1$-dimensional tame general basic object over $(W_r, E_r^{(1)})$. This $d-1$-dimensional structure is defined by local restrictions 
 to smooth hypersurfaces of maximal contact.
\endproof

\vskip 0.4cm

In particular, if we assume by induction, the existence of resolutions for tame general basic objects of dimension $d-1$, then there is an integer $R>r$, and  a sequence 

\begin{equation}\label{ant2_22}
\begin{array}{ccccccccccc}
 (J_0,b) & & (J_1,b) & & & & (J_r,b)  &  &  & & (J_{r+1},b)\\
(W_{0}, E_0) & \overset{\pi_{Y_0}}{\longleftarrow} & (W_{1}, E_1) & \overset{\pi_{Y_1}}{\longleftarrow} & \dots & \overset{\pi_{Y_{r-1}}}{\longleftarrow} &
(W_{r}, E_r)   &\overset{\pi_{Y_{r}}}{\longleftarrow} & \dots &  \overset{\pi_{Y_{R-1}}}{\longleftarrow} &
(W_{R}, E_{R})
\end{array}
\end{equation}
so that  $(p,q)=\max t^{(d)}_r=\max t^{(d)}_{r+1}= \dots = \max t^{(d)}_{R-1}> \max t^{(d)}_{R}.$

As $p\in\dfrac{1}{b}\mathbb{Z} $, and $q$ is a positive integer bounded by $d$, at some point the value $p$ must drop. So by successive applications of this method we finally come to the case in which 
$\max\word_{R}=0$, as was required.

\end{remark}
\begin{parrafo} As for the uniqueness of the resolution of $\B_0=(W_0,(J_0,b), E_0)$, recall that if we choose centers 
$Y_i \subset \Max t_i^{(d)}$, then 
$\max t_0\geq  \max t_1 \geq \cdots \geq \max t_{r}$ (Remark \ref{rmk417}).
Set $r_0$ as the smallest index for which $\max t_{r_0}=
\max t_{r_0+1}=\dots =\max t_{r}$, and apply Theorem \ref{314} at $r_0$, which states that 
$ (\mathcal F_{r_0}(\max
t_{r_0}), (W_{r_0}, E_{r_0}))$ is equivalent to a tame 
 assignment  of dimension $d-1$ over $(W_{r_0}, E_r^{(1)})$.

 So far we have discussed matters over the basic object $\B_0=(W_0,(J_0,b), E_0)$, but the same applies for a tamely embedded 
 basic object, say $(\mathcal F_{\mathcal B_0}, (N_0,E''_0))$ defined by  $\B_0=(W_{0},(J_{0},b),E_{0})$, and $(N_0, E''_0)$, as in \ref{rkccd}. Recall that a sequence (\ref{lrant}) induces a sequence (\ref{2rvat})

We can reformulate Theorem \ref{314} by:
\begin{theorem}\emph{(Inductive property of $t^{(d)}$ for tamely embedded basic objects.)}\label{315} Set $(\mathcal F_{\mathcal B_0}, (N_0,E''_0))$, $\mathcal B_0$, $r$ and $r_0$ as above. Set $\mathcal B_r=(W_r,(J_r,b), E_r)$ and $d= \dim W_r$. 
If $\Max t^{(d)}_r$ has no component of dimension $d-1$, then the assignment of closed sets $ (\mathcal F_r(\max
t^{(d)}_r), (N_r, E''_r))$ is equivalent to a tamely 
 embedded general basic object  of dimension $d-1$, defined over
 $(N_r, (E''_r)^{(1)})$, where $(E''_r)^{(1)}$ are the hypersurfaces of $E''_r$ that arise by blowing up at $Y_{s_0}$, $Y_{s_0+1}$,\dots , $Y_{r-1}$, and $s_0$ is as in (\ref{leqqf}).
 \end{theorem}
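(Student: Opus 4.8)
The plan is to derive Theorem \ref{315} from its non-embedded form, Theorem \ref{314}, applied to the bare basic object $\mathcal B_r=(W_r,(J_r,b),E_r)$, together with the principle that the ambient space can be neglected, Proposition \ref{pr382}. First I would set up the standing correspondence between the two ambients: in the sequence (\ref{lrant})--(\ref{2rvat}) each exceptional hypersurface of $N_i$ meets $W_i$ in the corresponding exceptional hypersurface of $W_i$, and each $H\in E''_0$ meets $W_0$ in some $\overline H\in E_0$; this gives a canonical bijection $E''_r\leftrightarrow E_r$, $H\mapsto H\cap W_r$, which carries the subset $(E''_r)^{(1)}$ of exceptional hypersurfaces produced by blowing up $Y_{s_0},\dots,Y_{r-1}$ onto the analogous subset $E_r^{(1)}\subset E_r$, and satisfies $E_r^{(1)}=(E''_r)^{(1)}\pitchfork W_r$. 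Since the function $t^{(d)}_r$ of Definition \ref{Deft} depends only on the weak equivalence class of $\mathcal B_r$ and on the hypersurfaces $E_r$, the closed set $\Max t^{(d)}_r$ lives intrinsically in $W_r$ and the hypothesis $R(1)(\Max t^{(d)}_r)=\emptyset$ is precisely the one required to apply Theorem \ref{314} to $\mathcal B_r$.

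Next I would invoke Theorem \ref{314} for $\mathcal B_r$: under that hypothesis it produces a (simple) basic object $(W_r,(D_r,b),E_r^{(1)})$ which, by local restriction to hypersurfaces of maximal contact, carries a tame $(d-1)$-dimensional general basic object over $(W_r,E_r^{(1)})$ equivalent to $(\mathcal F_r(\max t^{(d)}_r),(W_r,E_r))$. It remains to move this structure into the ambient space $N_r$. Equip the basic object $(W_r,(D_r,b),E_r^{(1)})$ with the closed immersion $W_r\subset N_r$ and the trivial partition $(E''_r)^{(1)}=((E''_r)^{(1)})^+\cup\emptyset$, where $((E''_r)^{(1)})^+=(E''_r)^{(1)}$ and $(E''_r)^{(1)}\pitchfork W_r=E_r^{(1)}$; as in \ref{paso1} this data defines a tamely embedded general basic object over $(N_r,(E''_r)^{(1)})$. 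Moreover each local hypersurface of maximal contact $Z_\lambda\subset W_r$, being in normal crossings with $E_r^{(1)}$ inside $W_r$ while $W_r$ meets $(E''_r)^{(1)}$ transversally inside $N_r$, remains a smooth $(d-1)$-dimensional subscheme of $N_r$ meeting $(E''_r)^{(1)}$ transversally, with $(E''_r)^{(1)}\pitchfork Z_\lambda=E_r^{(1)}\pitchfork Z_\lambda$; thus the $(d-1)$-dimensional tame general basic object produced over $(N_r,(E''_r)^{(1)})$ is the ``ambient'' version of the one over $(W_r,E_r^{(1)})$, built from the same $(D_r,b)$ and the same charts.

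It remains to compare the assignments of closed sets. By Proposition \ref{pr382}(1) together with the discussion in \ref{rkccd}--\ref{38}, the assignment over $(N_r,(E''_r)^{(1)})$ cut out by the lifted general basic object coincides with the lift of the assignment over $(W_r,E_r^{(1)})$ given by that general basic object, i.e. with the restriction of $(\mathcal F_r(\max t^{(d)}_r),(N_r,E''_r))$ to local sequences adapted to $(E''_r)^{(1)}$; this is exactly what ``equivalent'' means in the statement, so Theorem \ref{315} follows. The main obstacle is this last comparison, namely the use of Proposition \ref{pr382}: one must check that every local sequence over $(N_r,(E''_r)^{(1)})$ induces, and conversely is induced by, a local sequence over $(W_r,E_r^{(1)})$ yielding the same closed sets. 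The only non-formal point is that a smooth morphism $W_i\leftarrow W_{i+1}$ occurring in a local sequence over $W_r$ need not a priori come from a smooth morphism $N_i\leftarrow N_{i+1}$; this is repaired \'etale-locally by Lemma \ref{lem415}, and once that is granted the remaining verifications are routine transversality bookkeeping with the partition.
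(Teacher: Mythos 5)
Your proposal is correct and takes essentially the same approach the paper intends: Theorem~\ref{315} is presented in the text merely as a reformulation of Theorem~\ref{314}, and the machinery you invoke -- the correspondence of exceptional divisors under the lifting of (\ref{lrant}) to (\ref{2rvat}), the comparison of assignments via Proposition~\ref{pr382}, and the \'etale-local lifting of smooth morphisms via Lemma~\ref{lem415} -- is precisely the apparatus set up in \ref{rkccd} and \ref{38} for that purpose. Spelling out the transversality of the local maximal-contact hypersurfaces $Z_\lambda$ with $(E''_r)^{(1)}$ inside $N_r$ (which follows from $T_xH+T_xZ_\lambda\supset T_x(H\cap W_r)+T_xZ_\lambda=T_xW_r$ together with $T_xH+T_xW_r=T_xN_r$) is a welcome addition that the paper leaves implicit.
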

\end{parrafo}
\begin{parrafo}\label{ptfec} 
The definition of the inductive functions $t^{(d)}$ for {\em tamely} embedded general basic objects, to be discussed  Section 5, will be done by patching the functions on the $d$-dimension basic objects involved in Definition \ref{ebgbo}, B). Once this point is settled, the Theorem will be:
 
\begin{theorem}\emph{(Inductive property of $t^{(d)}$  for tamely embedded general basic objects).}\label{316} Let $(\mathcal{F},(N_r,E_r))$
be a tamely embedded general basic object of dimension $d$ (see Definition \ref{ebgbo}). If $\Max t^{(d)}_r$ has no component of dimension $d-1$ then  $(\mathcal F(\max
t^{(d)}_r), (N_r, E_r))$ is $d-1$-dimensional and tamely embedded over $(N_r, E'_r)$, where $E'_r$ is a suitable subset of $E_r$.
 \end{theorem}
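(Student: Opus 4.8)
The plan is to reduce Theorem \ref{316} to the already-established Theorem \ref{315} by working locally over the charts of the étale cover that define the tamely embedded general basic object. Recall that by Definition \ref{ebgbo}, a tamely embedded general basic object $(\mathcal{F},(N_r,E_r))$ comes equipped with an étale cover $\{U_\lambda\}$ of $N_r$ such that over each $U_\lambda$ there is a closed smooth $d$-dimensional subscheme $\overline{U}_\lambda$, a basic object $\B_\lambda=(\overline U_\lambda,(J_\lambda,b_\lambda),(\overline{E'_0}^+)_\lambda)$ with $(\overline{E'_0}^+)_\lambda = (E'_r)^+_\lambda\pitchfork \overline U_\lambda$, so that the restriction $((\mathcal F)_{U_\lambda},(U_\lambda,(E'_r)_\lambda))$ is the assignment of closed sets associated to $\B_\lambda$ and the given partition in the sense of \ref{ods}; tameness means no hypersurface of $(E'_r)^-_\lambda$ meets $F_r$. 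So first I would fix such a cover.

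Next, on each chart $U_\lambda$ the relevant data is exactly a tamely embedded basic object in the sense of \ref{rkccd} (tame case): the closed immersion $\overline U_\lambda\subset U_\lambda$, the hypersurfaces of $(E'_r)^+_\lambda$ meeting $\overline U_\lambda$ transversally (so the condition \eqref{31110} holds), and — because tameness forces $(E'_r)^-_\lambda$ to miss $F_r=\Sing(J_\lambda,b_\lambda)$ — the $(E'_r)^-_\lambda$ part is invisible to the assignment near $F_r$, exactly as in the reduction to the tame case discussed in \ref{ods}. Therefore Theorem \ref{315} applies to each $\B_\lambda$: if $\Max t^{(d)}_r$ has no component of dimension $d-1$, then $(\mathcal F_r(\max t^{(d)}_r),(U_\lambda,(E'_r)_\lambda))$ — i.e. the restriction to $U_\lambda$ of $(\mathcal F(\max t^{(d)}_r),(N_r,E_r))$ obtained via the Handy Lemma \ref{Handy} together with Corollary \ref{cor225}(2) — is equivalent to a tamely embedded general basic object of dimension $d-1$, defined over $(U_\lambda,(E'_r)^{(1)}_\lambda)$ in terms of a simple basic object $(\overline U_\lambda,(D_\lambda,b_\lambda),(E'_r)^{(1)}_\lambda)$, where $(E'_r)^{(1)}_\lambda$ consists of the exceptional hypersurfaces introduced by the blow-ups at $Y_{s_0},\dots,Y_{r-1}$ (and $s_0$ is as in \eqref{leqqf}). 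The subset $E'_r\subset E_r$ in the statement is then globally well defined: it is the set of hypersurfaces of $E_r$ whose restrictions to the charts give these $(E'_r)^{(1)}_\lambda$, which is a notion intrinsic to the sequence of blow-ups (it does not depend on $\lambda$), so the local subsets patch.

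The remaining point — and this is the one place requiring care — is that the locally defined $(d-1)$-dimensional tame general basic objects $(\overline U_\lambda,(D_\lambda,b_\lambda),(E'_r)^{(1)}_\lambda)$ patch, up to weak equivalence, to give a global $(d-1)$-dimensional tamely embedded general basic object over $(N_r,E'_r)$. Here one invokes exactly the philosophy of Section 2: by Definition \ref{ebgbo} it suffices to check that on overlaps $U_\lambda\cap U_\beta$ the two restrictions are weakly equivalent, and this follows because both restrictions compute the {\em same} assignment of closed sets, namely the restriction of $(\mathcal F(\max t^{(d)}_r),(N_r,E_r))$ to $U_\lambda\cap U_\beta$ — indeed $(\mathcal F(\max t^{(d)}_r),(N_r,E_r))$ is a globally defined assignment of closed sets by the Handy Lemma, and Theorem \ref{315}, applied over each chart, identifies its restriction with the assignment defined by the local $(d-1)$-dimensional data. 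By Remark \ref{ej123}(2) (two basic objects are weakly equivalent iff they define the same assignment), together with Proposition \ref{pr382}(1) which tells us that weak equivalence of the defining basic objects can be checked after passing to the ambient $N$, the gluing condition of Definition \ref{ebgbo}, A) is verified; and tameness of the glued object is immediate since it is a pointwise/local condition on $F_r(\max t^{(d)}_r)\subset F_r$ which already holds on each chart. The main obstacle, then, is this patching/well-definedness argument rather than any new geometric input: one must be attentive to the fact that the charts $\overline U_\lambda$ and the hypersurfaces of maximal contact used in Theorem \ref{314}/\ref{315} are not canonical, only canonical up to weak equivalence, and the whole point of working with general basic objects (Definition \ref{ebgbo}) and with the weak-equivalence-compatible functions of Section 3 is precisely to make this gluing legitimate.
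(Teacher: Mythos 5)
Your plan — work chart by chart, invoke Theorem \ref{315} on each chart, then patch — is the same broad strategy the paper pursues in Section 5 (Case~0). But there is a genuine gap in the patching step, and it is precisely the subtlety the paper is at pains to address.

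On an overlap $U_\lambda\cap U_\beta$, the two local pictures involve two \emph{different} $d$-dimensional smooth subschemes $\overline U_\lambda$ and $\overline U_\beta$ inside the ambient chart, together with two basic objects $\B_\lambda$ and $\B_\beta$ living on those different subschemes. The tools you invoke for the patching — Remark \ref{ej123}(2) and Proposition \ref{pr382}(1) — are both statements about two basic objects sitting on the \emph{same} $(W_0,E_0)$: they say nothing about comparing $(\overline U_\lambda,(J_\lambda,b_\lambda),\cdot)$ with $(\overline U_\beta,(J_\beta,b_\beta),\cdot)$, since these are not weakly equivalent (they are not even defined on the same smooth scheme, so "weak equivalence" is not available). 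What you actually need is the statement that when two $d$-dimensional basic objects embedded in the same $N$ define the same assignment of closed sets over $(N,E')$, their Hironaka order functions agree as functions on the common closed set $F\subset N$; this is Theorem \ref{teo23}, which you never cite. Once one has that, the fact that $t^{(d)}$ is a satellite function of $\ord$ (Remark \ref{satf}) gives that $t^{(d)}$ itself patches — and that is what makes $\Max t^{(d)}_r$ globally well defined, so that the Handy Lemma applies and the $(d-1)$-dimensional data produced by Theorem \ref{315} on different charts describe restrictions of one global assignment.

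So the gap is not in using Theorem \ref{315} locally, but in the justification that $t^{(d)}_r$, and hence $\Max t^{(d)}_r$ and $(\mathcal F(\max t^{(d)}_r),(N_r,E'_r))$, are well defined independently of the choice of $\overline U_\lambda$ on overlaps. The paper's argument (Section 5, Case~0, together with \ref{ptfec}) runs through Theorem \ref{teo23} for exactly this reason; your proposal would be repaired by replacing the appeal to Remark \ref{ej123}(2)/Proposition \ref{pr382}(1) with an appeal to Theorem \ref{teo23} plus the satellite nature of $t^{(d)}$.
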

 \color{black}
\end{parrafo}

\begin{parrafo}\label{rmk433}
 \textbf{On the role of the function $ t(em)^{(d)} $.}

The functions $ t(em)^{(d)} $ where defined in Definition \ref{Deftem} only for embedded basic objects of dimension $d$ (for the non-tame case). These are, in particular, general basic objects (\ref{ebgbo}). In Section 5, we also prove that 
the functions $ t(em)^{(d)} $ can be defined for any $d$-dimensional general basic object. We shall also indicate why these functions are well adapted to the Hilbert-Samuel stratification and particularly with Theorem \ref{H6}.
 They will enable us to reduce the  resolution of a $d$-dimensional general basic object to that 
of resolution of a {\em tame} general basic object of the same dimension $d$. The latter will be guaranteed by the inductive function $ t^{(d)} $.

In this section we explain why this property will hold by studying 
the functions $ t(em)^{(d)} $ in the (restricted) context of embedded basic objects.

  \end{parrafo}
   
   \begin{parrafo}

 Let $(\mathcal F_{\mathcal B}, (N_0,E'_0))$ be the assignment defined by a basic object $\B_0=(W_{0},(J_{0},b),E_{0})$, and by $(N_0, E'_0)$ with a partition $E'_0=(E_0')^+\cup (E_0')^-$. Consider assignments over $(N_0, E'_0)$ defined by
$(\mathcal F_{\mathcal B_0}, (N_0,E'_0))$, say
 \begin{equation}\label{nredt}
\xymatrix@C=1.5pc@R=0pc{
F_0= \Sing(J_0,b)  &F_1=\Sing (J_1,b) &  &F_r=\Sing(J_r,b)\\
(N_{0}, E'_0)  & (N_{1}, E'_1)\ar[l]  &\cdots\ar[l]  &(N_{r}, E'_r)\ar[l]\\ 
}\end{equation}
with $ Y_{i}\subset\Max t(em)_{i} $ for each $i$ for which $W_i
\longleftarrow W_{i+1} $ is defined with
center $Y_i$. Set $$\mathcal B_r=(W_r,(J_r,b), E_r),$$ 
where $W_r\subset N_r$, and  
$E'_r=(E_r')^+\cup (E_r')^-$ as in Definition \ref{Deftem}. In particular, 
${E}_r=W_r\pitchfork(E'_r)^+$.

In this setting 
$$\max t(em)_{0}^{(d)}\geq\cdots\geq\max t(em)_{r}^{(d)} \ (\hbox{see Corollary }\ref{cor225})$$
and the Handy Lemma says that a new assignment 
$ (\mathcal F_r(\max t(em)_r^{(d)}), (N_r, E'_r))$
is defined for $ (\mathcal F_{\mathcal B_r}, (N_r, E'_r))$ with the previous partition $E'_r=(E_r')^+\cup (E_r')^-$.

 The closed set assigned by $ (\mathcal F_r(\max t(em)_r^{(d)}), (N_r, E'_r))$ to $(N_r, E'_r)$ is $\Max t(em)_r^{(d)}$. To ease the notation assume that 
 (\ref{nredt}) is a sequence of blow-ups, so $\dim W_r=\dim W_0=d$.
\end{parrafo}

\begin{theorem}\emph{(Descending property of $t^{(d)}(em)$ for embedded basic objects)}\label{317em} Set $\mathcal B_r=(W_r,(J_r,b), E_r)$  and $E'_r=(E_r')^+\cup (E_r')^-$ as above, and $d= \dim W_r$. The assignment of closed sets $ (\mathcal F_r(\max
t(em)_r), (N_r, E'_r))$ is equivalent to a {\em tamely} 
 embedded basic object,  of the same dimension $d$, defined over
 $(N_r, (E'_r)^+)$ by a basic object $\mathcal D_r=(W_r,(D_r,e),  E_r)$.
 \end{theorem}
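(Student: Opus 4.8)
The plan is to show that the closed set $\Max t(em)^{(d)}_r$, equipped with the data assigned by $(\mathcal{F}_r(\max t(em)^{(d)}_r),(N_r,E'_r))$, can be realized by a basic object $\mathcal{D}_r = (W_r,(D_r,e),E_r)$ together with the immersion $W_r\subset N_r$ and the positive part $(E'_r)^+$ of the partition, with the crucial feature that the negative hypersurfaces no longer meet the assigned closed set --- i.e. that the new embedded basic object is \emph{tame} in the sense of Definition \ref{ebgbo}, B). The point is that $\max t(em)^{(d)}_r = (p,q)$ records a maximal value $p$ of $\word_r$ together with a maximal number $q$ of hypersurfaces of $(E'_r)^-$ through a point; since these two numbers are simultaneously maximal on $\Max t(em)^{(d)}_r$, both the relevant behavior of the ideal and the incidence with the $q$ fixed negative hypersurfaces is rigid there.

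The key steps, in order, are as follows. First, I would describe $\Max t(em)^{(d)}_r$ scheme-theoretically inside $W_r$: it is the intersection of $\Max\word_r$ (the locus where $\nu_\xi(\overline{J}_r)/b = p$) with the locus where exactly $q$ --- hence, given the maximality of $q$, at least $q$ --- hypersurfaces of $(E'_r)^-$ pass through $\xi$. After an open restriction (permissible, since by Corollary \ref{cor225} and the Handy Lemma all of this is compatible with restrictions) one may assume there is a fixed subset $\Theta\subset (E'_r)^-$ with $\#\Theta = q$ such that $\Max t(em)^{(d)}_r\subset \bigcap_{H\in\Theta} H$ and no point of $\Max t(em)^{(d)}_r$ lies on any hypersurface of $(E'_r)^-\setminus\Theta$. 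Second, I would build the new ideal $D_r$ inside $\mathcal{O}_{W_r}$: on the one hand one incorporates the ``companion ideal'' machinery attached to $\overline{J}_r$ with the target value $p$ (the standard device that turns a pair $(\overline{J}_r,\cdot)$ with $\max\word = p$ into a \emph{simple} basic object of order exactly $e$, as in the proof of Theorem \ref{314}); on the other hand, one multiplies in the ideals $I(H)$ for $H\in\Theta$, so that $\Sing(D_r,e)$ is forced to lie on all of the $q$ fixed negative hypersurfaces. The exponent $e$ is chosen (as in \cite{EncVil97:Tirol}) so that $\Sing(D_r,e)$ equals precisely $\Max t(em)^{(d)}_r$, and so that this equality is preserved under the permissible transformations and smooth pull-backs allowed for the embedded basic object. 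Third, I would check that with this $\mathcal{D}_r$, once the hypersurfaces of $\Theta$ have been absorbed into the ideal, the remaining hypersurfaces of $(E'_r)^-$ --- which are exactly the strict transforms of hypersurfaces of $(E'_{0})^-$ that are relevant --- no longer meet $\Sing(D_r,e)$, so the resulting embedded general basic object over $(N_r,(E'_r)^+)$ is tame; and that the transformation law for $\mathcal{D}_r$ under blow-ups with centers in $\Sing(D_r,e)$ matches the transformation law of the assignment $(\mathcal{F}_r(\max t(em)_r),\cdot)$, invoking the Handy Lemma \ref{Handy} and Corollary \ref{cor225}(1) for the non-increasing property, and Corollary \ref{rfj}/Theorem \ref{teo23} for independence of the weak-equivalence class. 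Finally, I would verify the defining equality of closed sets for all local sequences (blow-ups of type A and smooth morphisms of type B), which is what ``equivalent'' means in the statement: by construction both assignments produce $\Sing(D_i,e) = F_i(\max t(em)_r)$ at every stage, and one checks compatibility with smooth pull-backs directly from Definition \ref{Deftem} and the formula $\word_i(\sigma_i(\xi)) = \word_i(\xi)$ of \eqref{wordsm}.

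The main obstacle I anticipate is the second step: constructing $D_r$ and the exponent $e$ so that simultaneously (i) $\Sing(D_r,e)$ is \emph{exactly} $\Max t(em)^{(d)}_r$, not just set-theoretically but in a way stable under the permitted transformations, and (ii) the resulting embedded basic object is genuinely tame, i.e. the incidence of the surviving negative hypersurfaces with $\Sing(D_r,e)$ is empty and stays empty after transformation. This is delicate because the ``companion ideal'' construction and the absorption of the $\Theta$-hypersurfaces interact: one must be careful that adding the factors $I(H)$, $H\in\Theta$, does not lower the order below $e$ at points one wishes to keep, nor raise the singular locus unexpectedly. The cleanest route is to follow the template of the proof of Theorem \ref{314} (Lemma 6.12 in \cite{EncVil97:Tirol}), replacing the role there played by the exceptional divisors $E_r^{(1)}$ with the fixed set $\Theta\subset(E'_r)^-$ here, and to reduce the verification of stability to the already-established non-increasing property in Corollary \ref{cor225}. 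The dimension here stays $d$ (rather than dropping to $d-1$ as in Theorem \ref{314}), which actually simplifies matters, since no passage to hypersurfaces of maximal contact is required --- the entire construction takes place inside the fixed $W_r\subset N_r$.
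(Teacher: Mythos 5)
Your plan follows the paper's broad outline (describe $\Max t(em)^{(d)}_r$ as the singular locus of a derived pair on $W_r$, apply the standard constructions of \cite{EncVil97:Tirol}, and check stability under transformations), but two concrete steps would not survive. The reduction to a fixed $q$-tuple $\Theta\subset(E'_r)^-$ after an ``open restriction'' does not reproduce the statement: distinct components of $\Max t(em)^{(d)}_r$ may lie on distinct $q$-tuples of negative hypersurfaces, so no single $\Theta$ works globally, and your restriction produces only a locally defined family of basic objects --- i.e.\ a \emph{general} basic object, which is the content of Theorem \ref{316em}, not of Theorem \ref{317em}, which asserts a single global basic object $\mathcal D_r=(W_r,(D_r,e),E_r)$. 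The paper avoids patching entirely by introducing the single ideal $K(q)=\prod_{F\in\mathcal{C}(q)}\sum_{i\in F}I(H_i)$ (product over all $q$-element subsets $F$ of $\{1,\dots,M\}$), whose vanishing set is exactly the locus of points lying on at least $q$ of the hypersurfaces of $(E'_r)^-$; this encodes the condition $n(em)_r(\xi)\geq q$ globally, yielding $\Max t(em)_r^{(d)}=\Sing(J_r,b)\cap\Sing(\bar J_r,a)\cap\Sing(K(q),1)$ (drop the middle term if $a=0$), which then feeds directly into the construction of Exercise~14.4 in \cite{EncVil97:Tirol} to produce $(D_r,e)$.

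The second problem is the phrase ``one multiplies in the ideals $I(H)$ for $H\in\Theta$'': multiplication does not force the singular locus into $H$; since $\nu_\xi(D\cdot I(H))=\nu_\xi(D)+\nu_\xi(I(H))$, a point off $H$ sees no change in order, so $\Sing(D\cdot I(H),e)$ still contains $\Sing(D,e)\setminus H$. To cut the singular locus down to $H$ one must \emph{sum}, e.g.\ pass to $D+I(H)^e$, which is precisely the sum-of-ideals device underlying the paper's $K(q)$. Finally, your ``tameness'' discussion gestures at, but does not state, the property the paper isolates as P1): for a smooth closed $Y\subset\Max t(em)_r^{(d)}$, one has $Y\subset H_i$ or $Y\cap H_i=\emptyset$ for every $H_i\in(E'_r)^-$ (irreducibility of $Y$ plus the fact that each of its points lies on exactly $q$ negative hypersurfaces), hence $Y$ has normal crossings with $E'_r$ if and only if it has normal crossings with $(E'_r)^+$. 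This is what makes the transformation laws of the assignment over $(N_r,E'_r)$ and the new tame one over $(N_r,(E'_r)^+)$ agree, and it needs to be made explicit.
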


Note that $E_r$ is the same for  $\mathcal D_r$ and $\mathcal B_r$
(in particular ${E}_r=W_0\pitchfork(E'_r)^+$).
Before we address the proof of Theorem \ref{317em} let us indicate that it states that $ (\mathcal F_r(\max
t(em)_r), (N_r, E'_r))$ can be identified with an assignment of closed sets 
over $(N_r, (E'_r)^+)$ defined by the closed immersion $W_r\subset N_r$ and a basic object $(W_r,(D_r,b),  E_r)$. This is an embedded assignment in the setting of \ref{rkccd} and \ref{38} (in the tame case). We may therefore 
apply Proposition \ref{pr382} which says that 
a resolution of $ (\mathcal F_r(\max
t(em)_r^{(d)}), (N_r, E'_r))$ is achieved by a resolution of the basic object 
$(W_r,(D_r,b),  E_r)$ (disregarding the embedding in $N_r$).

Since we know how to achieve resolution of basic objects of dimension $d$ (see Remark \ref{rmk42}), one may extend the previous sequence to, say
 \begin{equation}\label{nred1}
\xymatrix@C=1.5pc@R=0pc{
F_0= \Sing(J_0,b)&  &F_r=\Sing (J_r,b) &  &F_R=\Sing(J_R,b)\\
(N_{0}, E'_0)  & \dots\ar[l] &(N_{r}, E'_r)\ar[l]  &\cdots\ar[l]  &(N_{R}, E'_R)\ar[l]\\ 
}\end{equation}
for some $R\geq r$, so that 
$$\max t(em)_{0}^{(d)}\geq\cdots\geq\max t(em)_{r}^{(d)} =\max t(em)_{r+1}^{(d)}=\cdots =\max t(em)_{R-1}^{(d)}> \max t(em)_{R}^{(d)}$$

Let us emphasize here that $ \max t(em)^{(d)}=(p,q)\in \mathbb Q\times \mathbb N$ can have first coordinate $p=0$ (as opposed to 
$ \max t^{(d)}=(p,q)$ with $p>0$).
After successive applications of resolution of basic objects we come to the case in which 
$ \max t(em)_R^{(d)}=(0,0)$, which is the natural analog of the case 
$ \max \word=0$ for basic objects, and can be therefore resolved,
as was indicated in \ref{sfrobo},  by means of the functions $h$ with values on $\Gamma$ (see \ref{lfhd}). In fact, as the second coordinate of $ \max t(em)_R^{(d)}=(0,0)$ is zero, no hypersurface of $(E'_R)^-$ intersects $F_R$ so the conditions in Proposition \ref{pr382} apply for 
$(\mathcal F_{\mathcal B_R}, (N_R,E'_R))$.

\vspace{0.25cm}

\begin{proof428} The sequence (\ref{nredt}) defines:
 
1) $(N_r, E'_r)$ and a partition $E'_r=(E'_r)^+ \cup (E'_r)^-$. 

2) The basic object $\mathcal B_r=(W_r,(J_r,b), E_r)$ and a closed immersion $W_r\subset N_r$.

3) A factorization 
\begin{equation} 
	J_{r}=I(H_{s+1})^{a_{1}}\cdots I(H_{s+r})^{a_{r}}\bar{J}_{r}
\end{equation}
as in (\ref{ExprJi}).

Set $\max t(em)_r^{(d)}=(p,q)$, so 
$p=\max \word_r=\frac{a}{b}\in \frac{1}{b}\mathbb N$, and of course 
$\Max t(em)_r^{(d)}$ is a closed subset in $\Sing(J_r,b)$. 

Note first that if 
$a\neq 0$ then $\Max \word_r=\Sing(J_r,b) \cap \Sing(\bar{J}_{r},a)$. Note also that the second coordinate $q$ is a non-negative integer, and that there are points 
in $\Max \word_r$ which may be in $q$ different hypersurfaces of $(E'_r)^-$, but no point of $\Max \word_r$ is contained in $q+1$ hypersurfaces of $(E'_r)^-$. 

Assume that $(E'_r)^-$ has $M$ hypersurfaces, say 
$(E'_r)^{-}$ =  $ \{ H_1,H_2, \dots
 , H_M  \}$,
and let $\mathcal C (q)$ denote the set of all subsets 
of $ \{ 1,2, \dots
 , M  \}$ with $q$ elements. 

Set $K(q)=\prod_{F\in \mathcal C (q) } \sum_{i \in F}I(H_i)$. Here $K(q)$ is an ideal over $N_r$. As a closed set in $W_r$ is closed also in $N_r$ and one can check that:

A) If $a\neq 0$, then 
$$\Max t(em)_r^{(d)}=\Sing(J_r,b) \cap \Sing(\bar{J}_{r},a)\cap \Sing(K(q),1).$$

B) If $a= 0$, then 
$$\Max t(em)_r^{(d)}=\Sing(J_r,b) \cap \Sing(K(q),1)$$

One can also check that the following properties hold:

{\bf P1)} If $Y$ is a closed and smooth subscheme 
in $\Sing(K(q),1)$, then $Y\subset H_i$ 
whenever $H_i \cap Y\neq \emptyset$ and $H_i \in E_r^-$. In particular, 
if $Y\subset \Max t(em)_r^{(d)}$, then $Y$ has normal crossings with 
$(N_r, E'_r)$ if and only if it has normal crossings with  $(N_r, (E')^+_r)$.

{\bf P2)} 
If
$$
\begin{array}{ccc}
F_r=\Sing (J_r,b) & & F_{r+1}=\Sing(J_{r+1},b) \\
(N_r , E'_r) & \longleftarrow & (N_{r+1} ,E'_{r+1})
\end{array}
$$
is defined by choosing a center $Y$ as above, then it induces 
transforms of the basic objects $(W_r, (J_r,b), E_r)$, 
$(W_r, (\bar{J}_{r},a), E_r)$, $(W_r, (K(q),1), E_r)$, and either 
$ (p,q)=\max t(em)_r^{(d)}>\max t(em)_{r+1}^{(d)},$
or $ (p,q)=\max t(em)_r^{(d)}=\max t(em)_{r+1}^{(d)}.$

In this last case either A) or B) holds for $\Max t(em)_{r+1}^{(d)}$, where the pairs in the right hand side of the equalities are replaced by their transforms.

There is a standard argument to define $(D,e)$ so that $\Sing(D,e)$ is the right hand side in A), or in B), and that such equality is preserved by transformations as in {\bf P2)} (see Exercise 14.4 in \cite{EncVil97:Tirol}).
\end{proof428}

\begin{parrafo}
It will be shown in Section 5, that the functions $ t(em)^{(d)}$ can be defined for embedded general basic objects.

 \begin{theorem}\emph{(Descending property of $t^{(d)}(em)$ for embedded general basic objects).}\label{316em} 
 
 Let $(\mathcal{F},(N_0,E'_0))$ and $E_0'=(E'_0)^+\cup(E'_0)^-$ define a general basic object of dimension $d$, and let $(\mathcal{F}_r',(N_r,E'_r))$ and $E_r'=(E'_r)^+\cup(E'_r)^-$ be defined by a sequence of transformations as in (\ref{nredt}). 
 Then $ (\mathcal F_r(\max
t(em)_r^{(d)}), (N_r, E'_r))$ is equivalent to a tame 
 embedded assignment  of dimension $d$, defined over
 $(N_r, (E'_r)^+)$.
 \end{theorem}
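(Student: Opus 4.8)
The plan is to deduce this global statement from Theorem \ref{317em}, its analogue for a single embedded basic object, by localizing on an \'etale cover and then checking that the resulting local data fit verbatim into the definition of an embedded general basic object (Definition \ref{ebgbo}).

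First I would fix an \'etale cover $\{U_\lambda\}$ of $N_0$ as provided by Definition \ref{ebgbo}(A): for each $\lambda$ there is a closed smooth $d$-dimensional $\overline U_\lambda\subset U_\lambda$ and a basic object $\mathcal B_\lambda=(\overline U_\lambda,(J_\lambda,b_\lambda),\overline E_\lambda)$, where $\overline E_\lambda=(E'_0)^+\pitchfork\overline U_\lambda$, so that the restriction $(\mathcal F)_{U_\lambda}$ over $(U_\lambda,(E'_0)_\lambda)$, with its induced partition, is the embedded basic object attached to $\mathcal B_\lambda$ and the immersion $\overline U_\lambda\subset U_\lambda$ as in \ref{ods}. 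The sequence (\ref{nredt}) lifts to a sequence over each $U_\lambda$ — blow-ups lift because their centers lie in $\Sing(J_i,b)$ and have normal crossings with $E'_i$, hence with $(E'_i)^+$, while smooth morphisms and open or \'etale restrictions lift tautologically — producing a lifted cover $\{(U_\lambda)_r\}$ of $N_r$, closed smooth $d$-dimensional $(\overline U_\lambda)_r\subset (U_\lambda)_r$, and the transform $\mathcal B_{\lambda,r}=((\overline U_\lambda)_r,(J_{\lambda,r},b_\lambda),\ldots)$. By the remark following Theorem \ref{teo23} the transform of an embedded general basic object is again one, and the restriction of $(\mathcal F'_r,(N_r,E'_r))$ to $(U_\lambda)_r$ is exactly the embedded basic object attached to $\mathcal B_{\lambda,r}$, its immersion, and the induced partition $(E'_r)_\lambda=(E'_r)^+_\lambda\cup(E'_r)^-_\lambda$.

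Next I would bring in the function $t(em)^{(d)}_r$, whose extension to general basic objects is carried out in Section 5; by that construction it restricts on each chart to the function of Definition \ref{Deftem} for the embedded basic object $\mathcal B_{\lambda,r}$, and by (the general-basic-object version of) Corollary \ref{cor225} the associated assignment of closed sets and functions is non-increasing, so the Handy Lemma \ref{Handy} legitimately yields $(\mathcal F_r(\max t(em)^{(d)}_r),(N_r,E'_r))$. Now fix a chart. If $(U_\lambda)_r$ misses $\Max t(em)^{(d)}_r$, the restriction of $\mathcal F_r(\max t(em)^{(d)}_r)$ to it is the empty assignment, which is trivially that of a tamely embedded basic object of dimension $d$ (e.g.\ $(\emptyset,1)$ on $(\overline U_\lambda)_r$). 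Otherwise the maximum of the local $t(em)^{(d)}$ equals the global value $\max t(em)^{(d)}_r$, and Theorem \ref{317em} applies to $\mathcal B_{\lambda,r}$: it furnishes a basic object $\mathcal D_{\lambda,r}=((\overline U_\lambda)_r,(D_{\lambda,r},e_\lambda),(E_r)_\lambda)$ over $((U_\lambda)_r,(E'_r)^+_\lambda)$, tamely embedded and of the same dimension $d$, such that $(\mathcal F_r(\max t(em)^{(d)}_r))_{(U_\lambda)_r}$ is precisely the assignment of closed sets it defines.

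Finally I would assemble the pieces: the cover $\{(U_\lambda)_r\}$ of $N_r$, the immersions $(\overline U_\lambda)_r\subset (U_\lambda)_r$, the partition of $(E'_r)^+$ with empty negative part, and the local basic objects $\mathcal D_{\lambda,r}$ (or $(\emptyset,1)$ on empty charts) satisfy verbatim the requirements of Definition \ref{ebgbo}(A) for $(\mathcal F_r(\max t(em)^{(d)}_r),(N_r,(E'_r)^+))$; there is no gluing condition to verify, since consistency on overlaps is automatic from the fact that all the local assignments are restrictions of the single global one (compare Remark \ref{rk72}(3)). Since the negative part of the new partition is empty, the object is tame in the sense of Definition \ref{ebgbo}(B), and its dimension is $d$. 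I expect the only delicate points to be the Section 5 bookkeeping — that $t(em)^{(d)}_r$ on a general basic object restricts chartwise to the function of Definition \ref{Deftem} for $\mathcal B_{\lambda,r}$, so that $\Max t(em)^{(d)}_r$ localizes correctly — together with the case split according to whether a chart meets $\Max t(em)^{(d)}_r$; the genuine geometric content is entirely contained in the already-available Theorem \ref{317em}.
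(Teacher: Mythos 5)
Your proof is correct and takes essentially the same approach as the paper, whose own treatment of Theorem~\ref{316em} is not a self-contained proof but is diffused through the Case 0 discussion of Section 5: there the authors establish that, on a general basic object, any two local defining basic objects (in particular $\B_\lambda$ and $\B_\beta$ on an overlap, or two choices over the same chart) yield the same closed set $\Max t(em)^{(d)}$ and, after Theorem~\ref{317em}, the same tame assignment, which is precisely what makes your chart-by-chart application of \ref{317em} glue. Your observation that the local tame assignments automatically agree on overlaps because each is, by the very statement of \ref{317em}, the \emph{restriction of the single global} $(\mathcal F_r(\max t(em)^{(d)}_r), (N_r, E'_r))$ is a slightly slicker way of saying the same thing; the only cosmetic slip is writing $(\emptyset,1)$ for the trivial basic object on charts missing $\Max t(em)^{(d)}_r$, where one should take the unit ideal $(\calo_{(\overline U_\lambda)_r},1)$, since pairs $(J,b)$ require $J$ nonzero.
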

\end{parrafo}

\section{Adaptability of the inductive function $t$ and resolution of singularities}

\begin{parrafo} \label{pare}
{\bf Summarizing the previous discussion}.  
We want to prove that resolution of singularities grows from resolution of basic objects, subjects to the condition that the later is compatible with Hironaka's notion of equivalence (see Definition \ref{weq}). 
In \ref{ods} and in \ref{paso1} we introduce the notions of embedded and of tamely embedded basic objects, respectively. Finally these two notions led us to the notions of embedded {\em general} basic objects in
\ref{ebgbo} (and tamely embedded in \ref{ebgbo}, B)). This extension requires some patching which we discuss below.

The functions $t^{(d)}$ and $t(em)^{(d)}$ were studied in the setting of embedded basic objects. We show now that:

\begin{itemize}
\item[{\bf I)}] The inductive function $t^{(d)}$ can be defined for any tamely embedded {\em general} basic object. And a resolution is attained (essentially) in terms 
of $t^{(d)}$, $t^{(d-1)}$,\dots.

\item[{\bf II)}]  The function $t(em)^{(d)}$ can be defined for any embedded
{\em general} basic object. And it allows us to reduce the resolution of embedded general basic objects to that of tamely embedded general basic objects. 
Resolution of embedded general basic objects of dimension $d$ is obtained (essentially) in terms 
of $t(em)^{(d)}$, $t^{(d)}$, $t^{(d-1)}$,\dots, $t^{(1)}$.
\end{itemize}

These two results will be discussed in Case 0). Some more generality 
will be needed to come from resolution of general basic objects to resolution of singularities. This is discussed in Case 1 and Case 2.
The point is that we have defined the notion of general basic objects making use of an embedding in a smooth scheme $N$ and more precisely as an assignment of closed sets on, say $(N,E)$, where $E$ are hypersurfaces in $N$ with only normal crossings.
A property of constructive resolution of singularities is that one can easily adapt it so that it provides a resolution of singularities which is independent 
of the embedding: Suppose that a reduced scheme $X$ is embedded in two different smooth schemes, say $X\subset N$ and $X\subset M$, where $N$ and $M$ may have different dimension. The problem of resolution of singularities of $X$ will lead us to that of constructing a resolution of a general basic object over $N$, on the one hand, and that of  constructing a resolution of a general basic object over $M$, on the other.
So we will want to know that the two general basic objects, embedded in different spaces, undergo the same constructive resolution. Precise statements of these facts are discussed in cases 1) and 2).

\end{parrafo}

\begin{parrafo}
{\bf Case 0: Two basic object and the same embedding.}

Fix, as in \ref{ods}, a smooth
scheme $N_0$, a set $E'_0$ of hypersurfaces with normal crossings at
$N_0$, together with a partition in two disjoint sets: say $(E'_0)^+$
and $(E'_0)^-$.  Assume now that there are two closed immersions
$W_0\subset N_0$, and $V_0\subset N_0$ of smooth schemes, both $W_0$ and $V_0$ of the same
dimension $d$.

Moreover, suppose that there are two basic objects, say $\B=(W_0,(J_0,b),E_0)$, and $\B'=(V_0,(K_0,d), F_0)$, so that 
${E}_0=(E_0')^+\pitchfork W_0$ and ${F}_0=(E_0')^+\pitchfork V_0$.

Assume finally that both basic objects $\B$ and $\B'$ define {\em the same} assignment of closed sets over
$(N_0,E'_0)$, say $(\mathcal F, (N_0,E'_0))$.

Strictly speaking, our proof in \ref{trdh} will show that Hironaka's
functions order, introduced in (\ref{ord}), coincide for two weakly equivalent basic objects say,
$(W_0,(J_0,b),E_0)$ and $(W_0,(K_0,d),E_0)$ (both with the same $(W_0,E_0)$, which is not the case in our setting).
And the main argument in such proof is that two such basic objects {\em define the same closed
sets}.
If we expect to argue similarly in our context, in which
$\Sing(J_0,b)$ is a closed set in $W_0$ and $\Sing(K_0,d)$ is a closed
set in $V_0$ then, in principle, it makes no sense to say that two closed sets
in different spaces are the same.

We overcome this difficulty simply by viewing them as the same closed
set in $N_0$.  This suffices to show that Hironaka's function
order is well defined on a $d$-dimensional general basic object $(\mathcal F,
(N_0,E'_0))$ (see Theorem \ref{teo23}).
So in this case we do not want to disregard the embedding in $N_0$,
because its is through this embedding that $(W_0,(J_0,b), E_0)$, and
$(V_0,(K_0,d), F_0)$ {\em define the same closed sets} (see Corollary
\ref{rfj}).

The previous discussion already ensures that the functions $t^{(d)}({em})$ are well defined for the embedded basic object $(\mathcal F,
(N_0,E'_0))$ and its transforms, independently of the choice of 
$(W_0,(J_0,b), E_0)$ or
$(V_0,(K_0,d), F_0)$. 

In the particular case in which the embedded basic objects are tame (\ref{paso1}) (namely, if $(E'_0)^-=\emptyset$)  a similar argument proves that the inductive function $t^{(d)}$ is well defined on 
$(\mathcal F,
(N_0,E'_0))$ and its transforms (they are the same independently of the choice of 
$(W_0,(J_0,b), E_0)$ or
$(V_0,(K_0,d), F_0)$ and their transforms).

Theorem \ref{317em} says that after a suitable sequence of, say $r$ transformations, 
$$(\mathcal F_r(\max
t(em)_r^{(d)}), (N_r, E'_r))$$ 
is equivalent to:
\begin{enumerate}
\item[1)] a tamely 
 embedded basic object of dimension $d$, defined over
 $(N_r, (E'_r)^+)$ by a basic object $(W_r,(D_r,b),  E_r)$, where 
 ${E}_r=W_r\pitchfork(E'_r)^+$.
 
\item[2)] a tamely 
 embedded basic object  of dimension $d$, defined over
 $(N_r, (E'_r)^+)$ by a basic object $(V_r,(G_r,c),  F_r)$, where 
 ${F}_r=V_r\pitchfork(E'_r)^+$.
\end{enumerate}

We claim now that the resolution of the $d$-dimensional basic objects $(W_r,(D_r,b),  E_r)$ and that of $(V_r,(G_r,c),  F_r)$ define the same sequence of transformations over $(N_r, (E'_r)^+)$.

We argue again as before. Here $\Max t(em)_r^{(d)}=\Sing (D_r,b)=
\Sing (G_r,c)$ and the same holds for transformations on centers included in $\Max t(em)_{r'}^{(d)}$, $r' \geq r$. So again, these two $d$-dimensional basic objects define the same closed sets if we view them in $(N_r, (E'_r)^+)$ (or in a transform $(N_{r'}, (E'_{r'})^+)$, $r' \geq r$). So both basic objects give rise now to the same tamely embedded basic object, namely $ (\mathcal F_r(\max
t(em)_r^{(d)}), (N_r, (E'_r)^+))$,  of dimension $d$.

The previous discussion also applies to shows that the inductive functions $t^{(d)}$ are defined for tamely embedded general basic objects. As these inductive functions are well defined, the closed sets $\Max t^{(d)}$ are well defined. Now the theorems of the inductive property of the functions  $t^{(d)}$ apply again, and defines now a $d-1$ dimensional tamely embedded general basic object. 
These leads to a full resolution of $ (\mathcal F_r(\max
t(em)_r^{(d)}), (N_r, (E'_r)^+))$, which is defined, by induction on $d$, essentially in terms of the inductive functions, namely $t^{(d)}$, $t^{(d-1)}$, \dots, $t^{(0)}$. Here each $t^{(d-i)}$ is applied on a $(d-i)$-dimensional tamely embedded general basic object, and a $(d-i-1)$-dimensional tamely embedded general basic object is defined by $\max t^{(d-i)}$.
\end{parrafo}

\begin{parrafo}
{\bf Case 1: A same basic object and two different embeddings.}

In the definition of an embedded basic object
$(\mathcal F,
(N_0,E'_0))$ in \ref{ods} we fix 
\begin{enumerate}
\item $(N_0,E'_0)$, and a decomposition $E_0'=(E_0')^+  \cup (E_0')^-$,

\item a closed embedding $W_0\subset N_0$ and a basic object 
$(W_0, (J_0,b), E_0)$, where $(E_0')^+\pitchfork W_0=E_0$.

\end{enumerate}

Suppose that another assignment, say 
$(\mathcal G,
(M_0,F'_0))$,
is defined by
\begin{enumerate}
\item $(M_0,F'_0)$, and a decomposition $F_0'=(F_0')^+  \cup (F_0')^-$,

\item a closed embedding $W_0\subset M_0$ and 
$(W_0, (J_0,b), E_0)$, where $(E_0')^+\pitchfork W_0=E_0$.

\end{enumerate}
Note that both make use of the same $(W_0, (J_0,b), E_0)$. Moreover, quite often this situation arises together with a natural bijection of hypersurfaces in $(E_0')^ +$ with those in  $(F_0')^ +$, and a bijection of hypersurfaces in $(E_0')^ -$ with those in  $(F_0')^ -$, in such a way that the functions 
$t(em)$ {\em naturally} coincide.
For example, take $(W_0,(J_0,b), E_0)$ with $E_0=\emptyset$, and two
different closed embeddings: say $W_0\subset N_0$ and $W_0\subset M_0$
in arbitrary smooth schemes.

A sequence of {\em blow-ups}  over 
$(W_0, (J_0,b),E_0=\emptyset )$, say 
\begin{equation}\label{attet}
\begin{array}{cccccccc}
 (J_0,b) & & (J_1,b) & & & &  &(J_{k_0},b)\\
(W_{0}, \emptyset) & \overset{\pi_{Y_0}}{\longleftarrow} & (W_{1}, E_1) & \overset{\pi_{Y_1}}{\longleftarrow} &
 &\cdots  &\overset{\pi_{Y_{k_0-1}}}{\longleftarrow} &
(W_{k_0}, E_{k_0})
\end{array}
\end{equation}
defines, via the two immersions, two sequences, say
\begin{equation}
\begin{array}{ccccccccc}
(N_{0}, E'_0=\emptyset) & \longleftarrow & (N_{1}, E'_1) & \longleftarrow &
&\cdots  &\longleftarrow &
(N_{k_0}, E'_{k_0})
\end{array}
\end{equation}	
and
\begin{equation}
\begin{array}{cccccccccc}
(M_{0}, F'_0=\emptyset) & \longleftarrow & (M_{1}, F'_1) & \longleftarrow &
 &\cdots  &\longleftarrow &
(M_{k_0}, F'_{k_0})
\end{array}
\end{equation}	
Now we obtain two inclusions, say: $W_{k_0} \subset N_{k_0}$ and
$W_{k_0} \subset M_{k_0}$, and two different assignments defined by:
\begin{enumerate}
\item $(W_{k_0}, (J_{k_0},b), E_{k_0})$ and $(N_{k_0}, E'_{k_0})$,

\item $(W_{k_0}, (J_{k_0},b), E_{k_0})$ and $(M_{k_0}, F'_{k_0})$.
\end{enumerate}

Each exceptional hypersurface arises from a blow-up.  This
provides a natural correspondence of hypesurfaces in $E'_{k_0}$ (in
$N_{k_0}$) with hypersurfaces in $F'_{k_0}$ (in $M_{k_0}$).

These last two embedded basic objects are tame. But we can also modify them so as to be non-tame. For example by taking

\begin{enumerate}
\item $(W_{k_0}, (J_{k_0},b), \emptyset)$ and $(N_{k_0}, E'_{k_0})$,

\item $(W_{k_0}, (J_{k_0},b), \emptyset)$ and $(M_{k_0}, F'_{k_0})$,
\end{enumerate}
where now both are in the setting of \ref{rk399}: each is an immersion which is not tame.

Let $(\mathcal F,
(N_{k_0}, E'_{k_0}))$ denote the first assignment and let $(\mathcal G,
(M_{k_0},F'_{k_0}))$ denote the second where now both are non-tame. One check that this is a situation in which the functions $t(em)^{(d)}$ ($d=\dim W_0$) are the same both for 
$(\mathcal F,
(N_{k_0}, E'_{k_0}))$, for $(\mathcal G,
(M_{k_0},F'_{k_0}))$, and for simultaneous transforms.

Following the notation of
Theorem \ref{317em}, this leads, on the one hand to $ (\mathcal F_{k_0}(\max
t(em)_{k_0}^{(d)}), (N_{k_0}, E'_{k_0}))$, and, on the other hand to $ (\mathcal F_{k_0}(\max
t(em)_{k_0}^{(d)}), (M_{k_0}, F'_{k_0}))$. The theorem asserts that both are now tamely embedded basic objects of dimension $d=$ dim($W_{k_0}$), and both defined by basic objects on $W_{k_0}$.

 Proposition \ref{pr382} ensures that these two tamely embedded basic object define two basic objects on $W_{k_0}$
which are weakly equivalent.
For the resolution of a tamely embedded basic object we make use of the function $t^{(d)}$. In this case the descending properties in Theorem \ref{315} says that the $ (\mathcal
F_{k_0}(\max t_{k_0}^{(d)}), (N_{k_0}, E'_{k_0}))$ defines a general basic object on
$W_{k_0}$, and also $ (\mathcal F_{k_0}(\max t_{k_0}^{(d)}), (M_{k_0},
F'_{k_0}))$ defines a basic object on $W_{k_0}$, and both are weakly equivalent. So both 
$ (\mathcal
F_{k_0}(\max t(em)_{k_0}^{(d)}), (N_{k_0}, E'_{k_0}))$ and $ (\mathcal F_{k_0}(\max t(em)_{k_0}^{(d)}), (M_{k_0},
F'_{k_0}))$
lead to the same {\em  non-embedded} $d$-dimensional general basic object on 
$W_{k_0}$.
And here it makes sense to say that the two assignments 
$ (\mathcal
F_{k_0}(\max t(em)_{k_0}^{(d)}), (N_{k_0}, E'_{k_0}))$ and $ (\mathcal F_{k_0}(\max t(em)_{k_0}^{(d)}), (M_{k_0},
F'_{k_0}))$
are the same, as their closed sets lie
as subsets of the same space ($W_{k_0}$ or a transform of $W_{k_0}$).
\end{parrafo}

\begin{parrafo}
{\bf Case 2: Two different basic objects and two different embeddings.}\label{tde}

The problem of resolution of singularities will lead us to this further generalization. The following example illustrates this fact.

Fix a singular reduced scheme $X_0$ and:
\begin{enumerate}
\item a closed embedding $X_0\subset W_0$, and

\item a closed embedding $X_0\subset V_0$,
\end{enumerate}
where $V_0$ and $W_0$ are smooth schemes which might have different dimension.

Hironaka says that (after taking suitable \'etale neighborhoods),
there are two basic objects:
\begin{enumerate}
\item[(1')] $(W_0,(J_0,b), \emptyset)$,

\item[(2')] $(V_0, (K_0,c), \emptyset)$,
\end{enumerate}
and satisfying the following conditions: Here (1') defines an assignment of closed sets, say 
$(\mathcal F_{}, (W_0, \emptyset))$, and 2') defines $(\mathcal G_{}, (V_0, \emptyset))$.  The closed set assigned to $W_0$, and to $V_0$,  is the Hilbert-Samuel stratum of $X_0$. The embedded dimension (and also the dimension) of $X_0$
locally at any closed point in the Hilbert stratum is constant. Take $d$
to be, for example, the dimension. Then Theorem \ref{H6} says that both $(\mathcal F_{}, (W_0, \emptyset))$ and $(\mathcal G_{}, (V_0, \emptyset))$ have a structure 
of $d$-dimensional general basic objects. Actually both are 
tame in this case as the second coordinates are $\emptyset$.

The closed set defined by $(\mathcal F_{0}, (W_0, \emptyset))$ and 
by $(\mathcal G_{0}, (V_0, \emptyset))$ are $\Sing(J_0,b)$ and $\Sing(K_0,c)$ which we
naturally identify with the Hilbert-Samuel stratum
of $X_0$. The same holds for transformations.

As both are general basic objects of the same dimension $d$,  \ref{trdh} will show that there is a well defined 
function 
$$\ord^{(d)}: \Sing(J_0,b) \longrightarrow \mathbb Q\ \hbox{ and }\ \ord^{(d)}: \Sing(K_0,d) \longrightarrow \mathbb Q,$$
which coincide as functions on the Hilbert stratum.
The same will hold if we take a common sequence of blow-ups as both are the basic objects corresponding to the highest Hilbert
Samuel function of $X_0$ and its transforms.  So the inductive functions $t^{(d)}$ will coincide via this identification, and moreover,
if 
\begin{equation}\label{lda22}
\begin{array}{cccccccccc}
\mathcal F_{0}& &\mathcal F_{1}&&&&\mathcal F_{k_0}\\
(W_{0}, E'_0=\emptyset) & \longleftarrow & (W_{1}, E'_1) & \longleftarrow &
\cdots  &\longleftarrow &
(W_{k_0}, E'_{k_0})& &
\end{array}
\end{equation}	
and
\begin{equation}\label{lda33}
\begin{array}{cccccccccc}
\mathcal G_{0}& &\mathcal G_{1}&&&&\mathcal G_{k_0}\\
(V_{0}, F'_0=\emptyset) & \longleftarrow & (V_{1}, F'_1) & \longleftarrow &
\cdots  &\longleftarrow &
(V_{k_0}, F'_{k_0})& &
\end{array}
\end{equation}	
are the resolutions defined by blowing up at the maximum of the functions (essentially $t^{(d)}$, $t^{(d-1)}$, \dots ,$t^{(1)}$), they both induce a sequence of blow-ups
over $X_0$, say
\begin{equation}\label{5lda}
\begin{array}{cccccccc}
X_{0} & \overset{\pi_{Y_0}}{\longleftarrow} & X_{1}& \overset{\pi_{Y_1}}{\longleftarrow} &
 &\cdots  &\overset{\pi_{Y_{k_0-1}}}{\longleftarrow} &
X_{k_0},
\end{array}
\end{equation}
together with closed immersions  $X_i \subset W_i$, and $X_i \subset V_i$. In fact, for each index $i$, centers $Y_i$ in (\ref{lda22}) and 
in (\ref{lda33}) coincide as subsets of the Hilbert-Samuel stratum of $X_i$. Note that $Y_i$ arises as the maximum of a function defined in terms of the inductive functions $t^{(d)}$, $t^{(d-1)}$,\dots. Moreover:
\begin{itemize}
\item $\max HS_{X_0}=\max HS_{X_0}=\cdots =\max HS_{X_{k_0-1}}>\max HS_{X_{k_0}}.$
\item There are closed immersions $X_{k_0}\subset W_{k_0}$,  $X_{k_0}\subset V_{k_0}$, and a natural correspondence between $E'_{k_0}$ and $F'_{k_0}$ as discussed in Case 1).
\end{itemize}

Hironaka says that (after taking suitable \'etale neighborhoods),
there are two basic objects:
\begin{enumerate}
\item[(1')] $(W_{k_0},(J_{k_0},b'), E'_{k_0})$,

\item[(2')] $(V_{k_0}, (K_{k_0},c'), F'_{k_0})$,
\end{enumerate}
where (1') defines an assignment of closed sets, say 
$(\mathcal F_{k_0}, (W_{k_0}, E'_{k_0}))$ and (2') defines $(\mathcal G_{k_0}, (V_{k_0}, F'_{k_0}))$ both attached to the Hilbert-Samuel stratum of $X_{k_0}$. 

Take $d$
to be the dimension of $X_{k_0}$ at any closed point of the stratum. Then Theorem \ref{H6} says both $(\mathcal F_{k_0}, (W_{k_0}, E'_{k_0}))$ and $(\mathcal G_{k_0}, (V_{k_0}, F'_{k_0}))$ are $d$-dimensional general basic objects. In addition  there is a natural bijection of $E'_{k_0}$ with $F'_{k_0}$. Set $(E'_{k_0})^-=E'_{k_0}$ and $(F'_{k_0})^-=F'_{k_0}$.

As both are general basic objects of the same dimension $d$, 
the previous discussion, and that in Case 1), show that the inductive functions $t(em)^{(d)}$ will coincide via this identification, and moreover,
if 
\begin{equation}\label{lda221}
\begin{array}{cccccccccc}
\mathcal F_{k_0}& &\mathcal F_{k_0+1}&&&&\mathcal F_{k_1}\\
(W_{k_0}, E'_{k_0}) & \longleftarrow & (W_{k_0+1}, E'_{k_0+1}) & \longleftarrow &
\cdots  &\longleftarrow &
(W_{k_1}, E'_{k_1})& &
\end{array}
\end{equation}	
and
\begin{equation}\label{lda331}
\begin{array}{cccccccccc}
\mathcal G_{k_0}& &\mathcal G_{k_0+1}&&&&\mathcal G_{k_1}\\
(V_{k_0}, F'_{k_0}) & \longleftarrow & (V_{k_0+1}, F'_{k_0+1}) & \longleftarrow &
\cdots  &\longleftarrow &
(V_{k_1}, F'_{k_1})& &
\end{array}
\end{equation}	
are the resolutions defined by blowing up at centers included in the maximum of the functions, they both induce a sequence of blow-ups
over $X_{k_0}$, say
\begin{equation}\label{5lday}
\xymatrix@R=0pc@C=3pc{
X_{k_0} & X_{k_0+1}\ar[l]_{\pi_{Y_{k_0}}}& \cdots\ar[l]_{\ \ \ \ \pi_{Y_{k_0+1}}}  &
X_{k_1}\ar[l]_{\pi_{Y_{k_1-1}}}
}
\end{equation}
together with closed immersions  $X_i \subset W_i$, and $X_i \subset V_i$. In fact, for each index $i$, centers $Y_i$ in (\ref{lda221}) and 
in (\ref{lda331}) coincide as subsets of the Hilbert-Samuel stratum of $X_i$. Note that $Y_i$ arises as the maximum of a function defined in terms of the function $t(em)^{(d)}$ (see Case 1)), and the inductive functions $t^{(d)}$, $t^{(d-1)}$,\dots. Moreover:
\begin{itemize}
\item $\max HS_{X_{k_0}}=\max HS_{X_{k_0+1}}=\cdots =\max HS_{X_{k_1-1}}>\max HS_{X_{k_1}}.$
\item There are closed immersions $X_{k_1}\subset W_{k_1}$,  $X_{k_1}\subset V_{k_1}$, and a natural correspondence between $E'_{k_1}$ and $F'_{k_1}$ as discussed in Case 1).
\end{itemize}
so again we set $(E'_{k_1})^-=E'_{k_1}$ and $(F'_{k_1})^-=F'_{k_1}$ and repeat the previous argument.

Finally, Hironaka proves that a sequence of transformations as above, so that 
$$\max {HS_{X_{0} }}>  \max {HS_{X_{k_0} }} >\max {HS_{X_{k_1} }}>\dots$$ 
leads to a resolution of singularities of $X_0$ if it is reduced, since 
$\max {HS_{X_i }}$ cannot decrease infinitely many times.

\end{parrafo}
\begin{parrafo}{\bf Constructive resolution}.

We first address the constructive resolution of a basic objects $\B_0=(W_0,(J_0,b),E_0)$ as stated in 
\ref{crobo}. It also applies to tame embedded general basic objects.

\begin{definition}\label{thefunction}
Set $T^d=\{\infty\}\sqcup ({\mathbb Q}\times {\mathbb Z}) \sqcup
\Gamma$ where this disjoint union is totally ordered by setting that
$\infty$ is the biggest element, $(\mathbb{Q}\times \mathbb{Z})$ is ordered by the lexicographic order, and that $\alpha < \beta $ if
$\beta \in ({\mathbb Q}\times {\mathbb Z})$ and $\alpha \in \Gamma
$ (see (\ref{lfhd})).  Set now $ I_{d}=T^d \times I_{d-1} $ ordered
lexicographically, and define $ g_r^d: \Sing(J_r,b) \longrightarrow I_{d}$.

\begin{enumerate}
\item[(i)]If  $\max \word_r=0$: $
g_r^d(x)= (h_{}(x),\infty_{d-1})$ (see (\ref{lfhd})). 

\item[(ii)] If $\max \word_r> 0$, then it will be enough to define
the function for $x \in \Max t^{(d)}_r (\subset \Sing(J_r,b))$ In fact, we
will define the resolution function, and hence the resolution
sequence, so that centers be included in $\Max t_r^{(d)}$.  Let $(J'',b'')$
be the $d$-dimensional basic object attached to $\max t_r^{(d)} $ (defining the
closed set $\Max t_r^{(d)}$, (see \ref{315})), and finally set: 

\vskip 0.1cm
\begin{enumerate}
\item[A)] $ g_r^d(x)=(\max t_r^{(d)}, \infty_{d-1})$ if $x\in R(1)(\Max t_r^{(d)})$ (see Remark \ref{rmk42}).
\vskip 0.1cm

\item[B)] $ g_r^d(x)=(\max t_r^{(d)}, g_r^{d-1}(x))$, if $x\notin
R(1)(\Max t_r^{(d)})$, where $g_r^{d-1}(x)$ is defined in
accordance to the $(d-1)$-dimensional general basic object attached to
$(J'',b'')$ (see Theorem \ref{315}).
\end{enumerate}
\end{enumerate}
\end{definition}

The precise definition in this case B) requires some clarification:
Assume, by induction, that $I_{d-1} $ has been introduced, and also
functions $g_r^{(d-1)}$ which define resolution of $(d-1)$-dimensional
general basic objects.

$T^d$ is the totally ordered set introduced for constructive
resolution for tame general basic objects, in terms of the
functions $g^d_r$.  The string of invariants attached to $\max g_r^d $
looks like:

\begin{itemize}
\item[1)] $\max  g_r^d =
(\alpha_1, \alpha_2, \dots, \alpha_e, \infty, \infty, \dots , \infty )$,
where $\alpha_i \in  ({\mathbb Q}\times {\mathbb Z}) $.

\item[2)] $\max g_r^d =(\alpha_1, \alpha_2, \dots, \alpha_e, \gamma, \infty,
\infty, \dots , \infty )$, $\alpha_i \in ({\mathbb Q}\times {\mathbb
Z}) $, where $\gamma \in \Gamma$.  One can also read the dimension of the
canonically defined center $\Max g_r^d$ from this datum (see
\cite{BHV}).
\end{itemize}

\end{parrafo}
\begin{remark}
Resolution of a $d$-dimensional reduced scheme over fields of
characteristic zero is achieved by setting the totally ordered set
$\mathbb N^{\mathbb N}\times(\mathbb{Q}\times\mathbb{Z})\times I_{d}$, ordered lexicographically, and
for any such scheme $X$ set the functions
$$f^{(d)}(x): (HS_X(x), t(em)^{(d)}(x),g^{(d)}(x)).$$
Follow the indication in Definition  \ref{thefunction} for the definition of the third
coordinate in terms of the first two coordinates.

The compatibility of the constructive resolution with smooth morphisms in
\ref{pt5}, follows from:
\begin{enumerate}
\item  the discussion in \ref{ladeHS}, and 

\item the discussion in Remark \ref{satf} which shows that this property relies
entirely on the compatibility of Hironaka's function $\ord$ with
smooth morphisms.
\end{enumerate}
\end{remark}

\section{On Hironaka's main invariant}

\begin{parrafo}	\textbf{On Hironaka's tricks} 
	
\begin{proof1}\label{trdh}
Fix a basic object  $	\B_{0}=(W_{0},(J_{0},b),E_{0})$, and set $d=\dim W_0$.
Recall that this basic object defines an $d$-dimensional  assignment of closed sets $ 
(\mathcal{F}_{0},(W_{0},E_{0})) $
by assigning for all local sequence (see Definition \ref{lsbo}):
\begin{equation}\label{rlcmw}
\begin{array}{ccccccc}
 (J_0,b) & & (J_1,b) &  & &  &(J_r,b)\\
(W_{0}, E_0) & \longleftarrow & (W_{1}, E_1) & \longleftarrow &
\cdots  &\longleftarrow &
(W_{r}, E_r)
\end{array}
\end{equation}
closed sets
\begin{equation}\label{rlcm2w}
\begin{array}{ccccccc}
 \Sing(J_0,b) & &  \Sing(J_1,b) &  & &  & \Sing(J_r,b)\\
(W_{0}, E_0) & \longleftarrow & (W_{1}, E_1) & \longleftarrow &
\cdots  &\longleftarrow &
(W_{r}, E_r)
\end{array}
\end{equation}
This is an assignment of closed sets where $ F_{i}=\Sing(J_{i},b) $.

Hironaka indicates that if you consider all possible sequences
(\ref{rlcm2w}), then you can find out the rational number
$\frac{\nu_{x_{0}}(J_{0})}{b}$ at any $x_0\in \Sing(J_0,b)$. More precisely, if you consider all sequences (\ref{rlcm2w}) together with points $x_i \in F_i$, each $x_i$ mapping to $x_{i-1}$ (and hence all mapping to $x_0\in F_0$), then the rational number $\frac{\nu_{x_{0}}(J_{0})}{b}$ is completely determined by the {\em codimension} of $F_i$ in $W_i$ locally at $x_i$, for all sequences as before.
In particular, if $\B_{0}=(W_{0},(J_{0},b), E_{0})$ and $\B'_{0}=(W_{0},(K_{0},d),E_{0})$
are weakly equivalent, then at any point $x_{0}\in\Sing(J_{0},b)=\Sing(K_{0},d) $,
$$ \frac{\nu_{x_{0}}(J_{0})}{b}=\frac{\nu_{x_{0}}(K_{0})}{d}.$$
Set $ \nu_{x_{0}}(J_{0})=b' $, so
$ \frac{\nu_{x_{0}}(J_{0})}{b}=\frac{b'}{b}.$
	
\noindent {\bf Assume first that $x_0$ is closed}. Define
$$ W_{0} \stackrel{\pi_{0}}{\longleftarrow} W_{1}=W_{0}\times\mathbb{A}^{1}_{k}$$
as the projection, so the fiber over $ x_{0} $ is a line, say $ L_{1}=\{x_{0}\}\times\mathbb{A}^{1}_{k} $. Set $ x_{1}=(x_{0},0)\in L_{1} $. Here $ \pi_{1} $ is smooth and defines, by taking pull-backs:
 \begin{equation}\label{grls}
\begin{array}{ccc}
 (J_0, b) & & (J_{1},b) \\
(W_0 , E_0) & \overset{\pi_0}{\longleftarrow} & (W_{1} ,E_{1})\\
\end{array}
\end{equation}
Now $ x_{1} $ is a closed point on the line $ L_{1} $, and we define, for any integer $ N $, a sequence 
\begin{equation}\label{HT1}
(W_{1},E_{1})\stackrel{\pi_{1}}{\longleftarrow}
(W_{2},E_{2})\stackrel{\pi_{2}}{\longleftarrow}\cdots
\stackrel{\pi_{N-1}}{\longleftarrow}(W_{N},E_{N})
\end{equation}
defined as follows: Let $ \pi_{1} $ be the blow-up at $ x_{1} $.
	
For any index $ i>1 $, set $ H_{i} $ the exceptional locus of $ \pi_{i-1} $, $ L_{i} $ the strict transform of $ L_{i-1} $ and $ x_{i}=L_{i}\cap H_{i} $, finally define $ \pi_{i} $ as the blow-up at the closed point $ x_{i} $.
	
In our example $ x_{1}\in L_{1}\subset\Sing(J_{1},b) $. One can check by induction that for any index $ i $, $ x_{i}\in L_{i}\subset\Sing(J_{i},b) $. So the sequence (\ref{HT1}) induces 
\begin{equation}\label{rlc5w}
\begin{array}{cccccccc}
 (J_0,b) & & (J_1,b) &  & &  &(J_N,b)\\
(W_{0}, E_0) & \overset{\pi_0}{\longleftarrow} & (W_{1}, E_1) & \overset{\pi_1}{\longleftarrow} &
\cdots  &\overset{\pi_{N-1}}{\longleftarrow} &
(W_{N}, E_N)
\end{array}
\end{equation}
and closed sets
\begin{equation}\label{rlcm32w}
\begin{array}{cccccccc}
 \Sing(J_0,b) & &  \Sing(J_1,b) & & &  & \Sing(J_N,b)\\
(W_{0}, E_0) & \overset{\pi_0}{\longleftarrow} & (W_{1}, E_1) & \overset{\pi_1}{\longleftarrow} &
\cdots  &\overset{\pi_{N-1}}{\longleftarrow} &
(W_{N}, E_N)
\end{array}
\end{equation}
Now check that locally at $ x_{N} $:
\begin{equation}\label{HT3}
J_{N}=I(H_{N})^{(N-1)(b'-b)}\bar{J}_{N}.
\end{equation}
Recall that $\dim W_0=d$, so 
$$\dim W_1=\dim W_2=\cdots =\dim W_N=d+1.$$ 
Set $F_N=\Sing(J_N, b)$. Note that $\dim F_N \leq d$. Formula (\ref{HT3}) says that
\begin{equation}\label{HT5}
(N-1)(b'-b)\geq b
\quad\Longleftrightarrow\quad
\dim\left(F_{N}\cap H_{N}\right)=d
\quad\Longleftrightarrow\quad
H_{N}\subset F_{N}.
\end{equation}
Note that $ b'-b=0 $ (i.e. $ \dfrac{b'}{b}=1 $) if and only if for \emph{all} $ N $, $ \dim(F_{N}\cap H_{N})<d$ (a formula that involves only the assignment of closed sets $ (\mathcal{F}_{0},(W_{0},E_{0})) $ defined by $\B_{0} $).
	
On the other hand, if $ b'-b>0 $, for any integer $ N $ big enough $ H_{N}\subset F_{N} $, we may define a blow-up $\pi_N$ with center $ Y_N=H_{N} $:
 \begin{equation}\label{gwrls}
\begin{array}{ccc}
 (J_N, b) & & (J_{N+1},b) \\
(W_N , E_N) & \overset{\pi_{Y_N}}{\longleftarrow} & (W_{N+1} ,E_{N+1})\\
\end{array}
\end{equation}
Note that $ H_{N}\subset W_{N} $ is a hypersurface and the blow-up at a hypersurface is an isomorphism, so $ W_{N} $ may be identified with $ W_{N+1} $, $ H_{N} $ with the exceptional locus of $ \pi_{N} $, say $ H_{N+1} $, and $ x_{N} $ with a unique point, say $ x_{N+1}\in W_{N+1} $.Note that locally at $ x_{N+1}$
$$ J_{N+1}=I(H_{N+1})^{(N-1)(b'-b)-b}\bar{J}_{N+1}. $$
Set $ F_{N+1}=\Sing(J_{N+1},b) $ and note that
$$ (N-1)(b'-b)-b\geq b\ \Longleftrightarrow\ \dim\left(F_{N+1}\cap H_{N+1}\right)=d\ \Longleftrightarrow\ H_{N+1}\subset F_{N+1} .$$
If these equivalent conditions hold we may blow-up again along the hypersurface $ H_{N+1} $. So, whenever possible, set
\begin{equation}\label{r3c5w}
\xymatrix@R=0pc@C=3pc{
 (J_N,b) & (J_{N+1},b) &  & (J_{N+S},b)\\
(W_{N}, E_N)  & (W_{N+1}, E_{N+1})\ar[l]_{\pi_{N}} & \cdots\ar[l]_{\ \ \ \ \ \ \ \ \ \ \pi_{N+1}} &
(W_{N+S}, E_{N+S})\ar[l]_{\!\!\!\!\!\!\!\!\!\!\!\!\!\!\!\pi_{N+S-1}}
}
%\begin{array}{cccccccc}
% (J_N,b) & & (J_{N+1},b) &  & &  &(J_{N+S},b)\\
%(W_{N}, E_N) & \overset{\pi_{N}}{\longleftarrow} & (W_{N+1}, E_{N+1}) & \overset{\pi_{N+1}}{\longleftarrow} &\cdots  &\overset{\pi_{N+S-1}}{\longleftarrow} &
%(W_{N+S}, E_{N+S})
%\end{array}
\end{equation}
by blowing-up the same hypersurface. One can check that locally at $x_{N+S} $ (mapping to $ x_{N} $ via the identity map):	
$$ J_{N+S}=I(H_{N+S})^{(N-1)(b'-b)-bS}\bar{J}_{N+S}.$$
Now sequences (\ref{rlc5w}) and (\ref{r3c5w}) induce a sequence over the 
assignment of closed sets $ (\mathcal{F}_{0},(W_{0},E_{0})) $:
\begin{multline} \label{HT7}
		(\mathcal{F}_{0},(W_{0},E_{0}))\longleftarrow(\mathcal{F}_{1},(W_{1},E_{1}))
		\longleftarrow\cdots\longleftarrow
		(\mathcal{F}_{N},(W_{N},E_{N}))\longleftarrow \\
		\longleftarrow(\mathcal{F}_{N+1},(W_{N+1},E_{N+1}))
		\longleftarrow\cdots\longleftarrow
		(\mathcal{F}_{N+S},(W_{N+S},E_{N+S}))
\end{multline}
where $ F_{i}=\Sing(J_{i},b) $ for $ i=0,1,\ldots,N+S $.
	 
Finally the sequence (\ref{HT7}) can be defined if and only if:
 \begin{align*}
(N-1)(b'-b)-(S-1)b\geq b \quad \Longleftrightarrow \quad & 
(N-1)(b'-b)\geq Sb \\	
 \Longleftrightarrow\quad & S\leq\left\lfloor\frac{(N-1)(b'-b)}{b}\right\rfloor=\left\lfloor(N-1)\Big(\frac{b'}{b}-1\Big)\right\rfloor
 \end{align*}
 where $ \lfloor\ \rfloor $ denotes the integer part.
	 
 This means that, for a fixed integer $ N $,  $ \Big\lfloor(N-1)\dfrac{b'}{b}-1\Big\rfloor $ is the 
 biggest integer $ S $ so that (\ref{HT7}) is defined.	 Thus for any integer $ N $ the integer
 $ \Big\lfloor(N-1)\dfrac{b'}{b}-1\Big\rfloor $ is determined by the assignment of closed sets defined by $\B_{0} $. Finally note that  
 $$ \frac{\nu_{x_{0}}(J_{0})}{b}-1= \frac{b'}{b}-1=
 \lim_{N\rightarrow\infty}\frac{1}{N-1} \Big\lfloor(N-1)\dfrac{b'}{b}-1\Big\rfloor $$
 In particular the rational number $ \dfrac{\nu_{x_{0}}(J_{0})}{b}  $ is defined in terms of the assignment of closed sets $ (\mathcal{F}_{0},(W_{0},E_{0})) $ defined by $ \B_{0} $,  at least when $x_0$ is a closed point in $\Sing(J_0,b)$.
 
 \vspace{0.2cm}
  
\noindent{\bf Suppose now that $x_0$ is not closed}. 
	
In this case restrict $W_0$ to an open subset so that the closure of $x_0$ is a smooth scheme, say $Y_0$, which has normal crossings with $E_0$. Define, as before 
$$ W_{0} \stackrel{\pi_{1}}{\longleftarrow}W_{1}=W_{0}\times\mathbb{A}^{1}_{k} $$
as the projection, so the fiber over $ Y_{0} $ is a smooth subscheme, say $ L_{1}=\{Y_{0}\}\times\mathbb{A}^{1}_{k} $. Set $ Y_{1}=(Y_{0},0)\subset L_{1} $.
	
As $ \pi_{0} $ is smooth, define as before:
 \begin{equation}\label{grls_1}
\begin{array}{ccc}
 (J_0, b) & & (J_{1},b) \\
(W_0 , E_0) & \overset{\pi_{Y_1}}{\longleftarrow} & (W_{1} ,E_{1})
\end{array}
\end{equation}
 $ Y_{1} $ is a smooth subscheme in $ L_{1} $ and a permissible center $Y_1\subset \Sing(J_1,b)$. We define, for any integer $ N $, a sequence 
\begin{equation}\label{HT1_1}
\xymatrix@C=2.8pc{
(W_{1},E_{1}) &(W_{2},E_{2})\ar[l]_{\pi_{1}} & \cdots\ar[l]_{\ \ \ \ \ \pi_{2}} & (W_{N},E_{N})\ar[l]_{\!\!\!\!\!\!\!\pi_{N-1}}
}\end{equation}
defined as follows: Let $ \pi_{1} $ be the blow-up at $ Y_{1} $.
For all index $ i>1 $, set $ H_{i} $ the exceptional locus of $ \pi_{i-1} $, $ L_{i} $ the strict transform of $L_{i-1} $ and $ Y_{i}=L_{i}\cap H_{i} $, finally define $\pi_{i} $ as the blow-up at $ Y_{i} $ which turns to be a permissible center $Y_i\subset \Sing(J_i,b)$.  The extension of the previous discussion to this context is now straightforward. 
\end{proof1}
\end{parrafo}

\end{document}